\documentclass[12pt,leqno,a4paper]{amsart}
\usepackage{amssymb}
\overfullrule 5pt

\textwidth160mm
\oddsidemargin5mm
\evensidemargin5mm

\newcommand{\FF}{{\mathbb{F}}}
\newcommand{\NN}{{\mathbb{N}}}
\newcommand{\QQ}{{\mathbb{Q}}}
\newcommand{\ZZ}{{\mathbb{Z}}}

\newcommand{\fA}{{\mathfrak{A}}}
\newcommand{\fS}{{\mathfrak{S}}}

\newcommand{\bC}{{\mathbf{C}}}
\newcommand{\bG}{{\mathbf{G}}}
\newcommand{\bL}{{\mathbf{L}}}
\newcommand{\bq}{{\mathbf{q}}}
\newcommand{\bu}{{\mathbf{u}}}

\newcommand{\cE}{{\mathcal{E}}}
\newcommand{\cH}{{\mathcal{H}}}

\newcommand{\Ce}{{\rm C}}
\newcommand{\No}{{\rm N}}

\newcommand{\Deg}{{\operatorname{Deg}}}
\newcommand{\Irr}{{\operatorname{Irr}}}
\newcommand{\rk}{{\operatorname{rk}}}

\newcommand{\GL}{{\operatorname{GL}}}
\newcommand{\SL}{{\operatorname{SL}}}
\newcommand{\SU}{{\operatorname{SU}}}
\newcommand{\Sp}{{\operatorname{Sp}}}

\newcommand{\tw}[1]{{}^#1\!}

\let\eps=\epsilon

\newcommand\oneone{\buildrel{1-1}\over\longrightarrow}

\def\qed{{~~~\vrule height .75em width .4em depth .3em}}
\def\irr#1{{\rm Irr}(#1)}
\def\ibr#1{{\rm IBr}(#1)}

\def\oh#1#2{{\bf O}_{#1}(#2)}
\def\Oh#1#2{{\bf O}^{#1}(#2)}

\def\cent#1#2{{\bf C}_{#1}(#2)}
\def\syl#1#2{{\rm Syl}_#1(#2)}
\def\nor{\triangleleft\,}

\def\norm#1#2{{\bf N}_{#1}(#2)}
\def\iitem#1{\goodbreak\par\noindent{\bf #1}}

\let\phi=\varphi
\def\sbs{\subseteq}


\newtheorem{thm}{Theorem}[section]
\newtheorem{lem}[thm]{Lemma}

\newtheorem{cor}[thm]{Corollary}

\newtheorem{prop}[thm]{Proposition}
\newtheorem{question}[thm]{Question}

\theoremstyle{definition}

\theoremstyle{remark}

\raggedbottom

\begin{document}

\title{Blocks with Equal Height Zero Degrees}

\date{\today}

\author{Gunter Malle}
\address{FB Mathematik, TU Kaiserslautern,
Postfach 3049, 67653 Kaisers\-lautern, Germany.}
\makeatletter
\email{malle@mathematik.uni-kl.de}
\makeatother
\author{Gabriel Navarro}
\address{Departament d'\`Algebra, Universitat de Val\`encia,
Dr. Moliner 50, 46100 Burjassot, Spain.}

\makeatletter
\email{gabriel.navarro@uv.es}
\makeatother

\thanks{The first
author thanks the Isaac Newton Institute for Mathematical Sciences,
Cambridge, for its hospitality during the preparation of part of this work}

\begin{abstract}
We investigate a natural class
of blocks of finite groups: the blocks such that
all of their height zero characters have the
same degree. It is conceivable that these blocks,
which are globally defined,
are exactly the  Brou\'e-Puig (locally defined) nilpotent blocks 
and we offer some partial results in this direction.
The most difficult result here is to prove
that, with one family of possible exceptions, blocks with equal height
zero degrees of simple groups have abelian defect groups and are in fact
nilpotent. 

\end{abstract}

\maketitle

\pagestyle{myheadings}
\markboth{Gunter Malle and Gabriel Navarro}{Nilpotent blocks}

\section{Introduction} \label{sec:main}

The celebrated nilpotent blocks of finite groups introduced by M. Brou\'e and
L. Puig in 1980 (\cite{B-P}) are locally defined in terms of the Alperin-Brou\'e
subpairs (\cite{A-B}). There is a general consensus that nilpotent blocks are
the most natural blocks from the local point of view. It is not easy, however,
to check if a block is nilpotent or not, and  to have a global characterization
of them, especially one that can be detected in the character table of the
group, would be quite interesting.
\smallskip

Here we propose to study blocks $B$ of a finite group $G$ such that all of
its height zero characters $\chi \in {\rm Irr}_0(B)$ have the same degree $d$.
This property of blocks, that can  easily be detected in the character table
of $G$, seem to appear quite naturally in block theory, and deserves some
consideration.
The blocks {\sl all} of whose irreducible characters have the same degree
were already considered by T. Okuyama and Y. Tsushima in \cite{O-T}.

\smallskip
In a nilpotent block $B$ all height zero degrees are equal. And we suspect
that the converse might be true. In this paper, we are able to prove this
in some cases, with quite different arguments. 

\smallskip

If $B$ is the principal block of $G$, or if the defect group
$D$ of $B$ is normal in $G$,
or if $D$ is abelian (and we assume the Height Zero Conjecture)
then  the blocks with equal height zero character degrees are nilpotent. 
These results
constitute Sections 3, 4, and 5 below. 

\smallskip
The most difficult result in this paper, to which a
large extent of it is devoted, is to prove that 
the blocks of simple groups with equal height zero
degrees have abelian defect groups and satisfy Brauer's Height Zero Conjecture.
By our previously mentioned result, this implies that equal height zero degrees
blocks are also nilpotent. This certainly agrees with the recent work of
J. An and C. Eaton in which they prove that nilpotent blocks
of simple groups have abelian defect groups for $p>2$ \cite{AE09}.

\smallskip
The study of  blocks 
of $p$-solvable groups with equal height zero degrees,
 which we do in the last section of the paper, leads to a variation of
a classical large orbit question
which does not seem easy to solve and which has interest in its
own. (Some recent partial results are given in  \cite{D-N}.)
This new type of orbit problem has  connections
with delicate questions on the $p'$-character degrees of finite groups.

\smallskip

Finally, let us mention that the blocks $B$ such that all character degrees
$\chi(1)$ are $p$-powers for $\chi \in \irr B$ give another example of
blocks with equal height zero characters degrees. These blocks
were proved to be nilpotent by work of G. R. Robinson and the second author
(\cite{N-R}).

\section{EHZD blocks and Nilpotent Blocks} \label{nilp}

Suppose that $G$ is a finite group, $p$ is a prime, and $B$ is a $p$-block
of $G$. In general, we use the notation in \cite{N}.
Hence $\irr B$ are the irreducible complex characters in $B$, $\ibr B$
are the irreducible Brauer characters in $B$, and ${\rm Irr}_0(B)$ are the
height zero characters of $B$.

For the sake of brevity, 
let us say that $B$ is {\bf EHZD} (equal height zero degrees)
if there is an integer $d$ such that
$\chi(1)=d$ for all $\chi \in {\rm Irr}_0(B)$.

Recall that a block $B$ is {\bf nilpotent} if whenever $(Q,b_Q)$ is a
$B$-subpair (that is, $b_Q$ is a block of $Q\cent GQ$  such that $(b_Q)^G=B$),
then $\norm G{Q,b_Q}/\cent GQ$ is a $p$-group.

If $B$ is nilpotent, then we know that ${\rm IBr}(B)=\{ \phi\}$ by Theorem~(1.2)
of \cite{B-P}. Also, if  $\chi \in \irr B$ has height zero, then by
(3.11) in page 126 of \cite{B-P}, we have that $\chi(1)=\phi(1)$.
It then follows that all irreducible height zero
characters in $B$ have the same degree. Thus, as we mentioned in the introduction,
 nilpotent blocks are EHZD blocks.
 (We also notice here that in a nilpotent block
 all height zero characters are modularly irreducible.
 This condition, if not equivalent, seems also closely related 
 to nilpotency as we shall point out in several places
 of this paper.)

\section{Principal blocks} \label{ppal}

If $B$ is the principal block of $G$,
then $(Q,b_Q)$ is $B$-subpair if and only if
$b_Q$ is the principal block of $\norm GQ$ (by the Third Main Theorem).
Since the principal block $b_Q$ is $\norm GQ$-invariant,
we conclude that $B$ is nilpotent if and only if $\norm GQ/\cent GQ$
is a $p$-group for every $p$-subgroup $Q$ of $G$. Hence
 $B$ is nilpotent
if and only if $G$ has a normal $p$-complement, by a classical theorem of
Frobenius.

\medskip

\begin{thm}
 Let $G$ be a finite group, let $p$ be a prime and let $B$ be the principal
 block of $G$. Then the following conditions are equivalent:
 \begin{enumerate}
  \item[\rm(a)] All height zero $\chi \in \irr B$ have the same degree.
  \item[\rm(b)] All height zero $\chi \in \irr B$ are modularly irreducible.
  \item[\rm(c)] $B$ is a nilpotent block.
 \end{enumerate}
\end{thm}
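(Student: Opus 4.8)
Write $B=B_0$ for the principal block and fix $P\in\syl{p}{G}$, which is a defect group of $B$. Since the defect group is a full Sylow $p$-subgroup, $\chi\in\irr B$ has height zero exactly when $p\nmid\chi(1)$; in particular $1_G$ is a height zero character of degree $1$. Three of the implications are essentially free. The implications (c)$\Rightarrow$(a) and (c)$\Rightarrow$(b) are the two general facts recorded in Section~\ref{nilp} (a nilpotent block is EHZD, and in a nilpotent block every height zero character is modularly irreducible). For (a)$\Rightarrow$(b), note that if all height zero characters share a common degree then, because $1_G$ is among them, that degree is $1$; thus every height zero character is linear, and a linear character is automatically modularly irreducible, its restriction to the $p$-regular classes being an irreducible (linear) Brauer character. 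Consequently the whole theorem reduces to the single implication (b)$\Rightarrow$(c).

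By the discussion preceding the statement, (c) is equivalent to $G$ possessing a normal $p$-complement, and I would pass to the equivalent Brauer-character formulation: $G$ has a normal $p$-complement if and only if $|\ibr{B}|=1$. (This is a known reformulation; one direction is the easy observation that a normal $p$-complement $K=\oh{p'}{G}$ acts trivially on every simple module in $B$, so that the simple $B$-modules are those of the principal block of the $p$-group $G/K$; the converse is the substantive half.) Thus it suffices to derive $|\ibr B|=1$ from hypothesis (b). The first reduction is to the case $\oh{p'}{G}=1$: the simple modules of $B$ coincide with those of the principal block of $\bar G:=G/\oh{p'}{G}$, so $|\ibr B|=|\ibr{B_0(\bar G)}|$, and since inflation along the $p'$-quotient $G\to\bar G$ carries the height zero characters of $B_0(\bar G)$ to height zero characters of $B$ and preserves modular irreducibility, hypothesis (b) descends to $\bar G$. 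We may therefore assume $\oh{p'}{G}=1$ and must show that $G$ is then a $p$-group.

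Assume for contradiction that $|\ibr B|=l>1$ and choose $\phi\in\ibr B$ with $\phi\neq 1_G^0$. The goal is to exhibit a height zero character of $B$ whose reduction is \emph{not} irreducible, contradicting (b). The natural tool is the column of the decomposition matrix attached to $1_G^0$, encoded by the projective indecomposable character $\Phi=\sum_{\chi\in\irr B}d_{\chi,1_G^0}\,\chi$, whose degree is divisible by $|P|>1$ while $d_{1_G,1_G^0}=1$; this forces ordinary constituents $\chi\neq 1_G$ to occur, and one wants to locate, among them or among the constituents attached to $\phi$, a character of $p'$-degree whose row in the decomposition matrix has more than one nonzero entry. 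I expect this last step to be the main obstacle: a purely formal degree count does not by itself prevent all $p'$-degree constituents from being among the linear characters of $p$-power order (which remain modularly irreducible), so some genuine input is needed --- either the substantive half of the equivalence ``$|\ibr{B}|=1\Leftrightarrow$ normal $p$-complement'' invoked above, or a direct argument that, once $l>1$, a second irreducible Brauer character must mix with $1_G^0$ inside the reduction of some $p'$-degree ordinary character. Granting such a character, it is a height zero character that is not modularly irreducible, the desired contradiction; hence $l=1$, $G$ is $p$-nilpotent, and (c) holds.
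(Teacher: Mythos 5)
Your treatment of the easy implications is correct, and your reduction of (a) to (b) is a genuine streamlining: since $1_G\in\irr B$ has height zero in the principal block, hypothesis (a) forces the common degree to be $1$, so all height zero characters are linear and hence modularly irreducible. (The paper instead proves (a)$\Rightarrow$(c) separately, via Corollary~3 of \cite{I-S}.) Your inflation argument reducing to $\oh{p'}{G}=1$ is also fine, and the reformulation ``$G$ has a normal $p$-complement iff $|\ibr B|=1$'' is indeed known: the nontrivial direction follows from $\bigcap_{\chi\in\irr B}\ker\chi=\oh{p'}{G}$, since $|\ibr B|=1$ forces $\chi^0=\chi(1)1_G^0$ for every $\chi\in\irr B$, putting all $p$-regular elements inside $\oh{p'}{G}$.

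However, there is a genuine gap exactly where you flag it: the implication (b)$\Rightarrow$(c) is never established. Your plan is to assume $|\ibr B|>1$ and exhibit a $p'$-degree character of $B$ whose reduction modulo $p$ is reducible, but the projective-indecomposable degree count you sketch does not produce one, and you concede that ``some genuine input is needed.'' Note also that granting the substantive half of the equivalence $|\ibr B|=1\Leftrightarrow p$-nilpotency cannot close this gap, because your missing step is (b)$\Rightarrow|\ibr B|=1$, not $|\ibr B|=1\Rightarrow$(c). The missing input is precisely what the paper supplies: Pahlings' theorem (Theorem~2 of \cite{P}), which states that if $\phi\in\ibr G$ is linear and every nonlinear $\chi\in\irr G$ with $d_{\chi\phi}\ne0$ has degree divisible by $p$, then $G$ has a normal $p$-complement. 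With $\phi=1_G^0$ this combines with (b) in two lines: any $\chi$ with $d_{\chi\phi}\ne0$ lies in the principal block, so if moreover $p\nmid\chi(1)$ then (b) gives $\chi^0\in\ibr G$, whence $\chi^0=1_G^0$ and $\chi$ is linear; thus every nonlinear such $\chi$ has degree divisible by $p$, and Pahlings' theorem yields the normal $p$-complement. So the architecture of your argument is sound, and even slightly more economical than the paper's (one hard implication instead of two), but without Pahlings' theorem or an equivalent normal-$p$-complement criterion the proof is incomplete.
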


\begin{proof}
In Section 2 we have pointed out that (c) implies
(a) and (b).  Suppose now that all height zero characters in
$B$ have the same degree. Hence all non-linear characters in $B$ have
degree divisible by $p$. Then $G$ has
a normal $p$-complement by Corollary 3  of \cite{I-S} and so $B$ is nilpotent.

Now, suppose that all  the height zero (that is, $p'$-degree) characters
in $B$ lift an irreducible Brauer character of $G$.
We are going to use a theorem of Pahlings that asserts
that $\phi \in \ibr G$ is linear and  all nonlinear
characters
$\chi \in \irr G$
with decomposition number $0 \ne d_{\chi \phi}$ have degrees divisible by
$p$, 
then $G$ has a normal $p$-complement.
(See Theorem 2 of \cite{P}.)
Write  $\phi=1_G \in \ibr G$ for the trivial Brauer character
of $G$, and suppose that $\chi$ is
non-linear with $d_{\chi \phi} \ne 0$.
If $\chi$ has $p'$-degree, then by hypothesis,
$\chi^0=1_G$
and therefore $\chi$ is linear. This is not possible.
Hence, we conclude that $p$ divides $\chi(1)$.
It follows that $G$ has a normal $p$-complement
by Pahling's theorem.
\end{proof} 
\section{Abelian Defect Groups} \label{abelian}

In this section we prove that
EHZD blocks with abelian defect groups are
exactly the nilpotent blocks (assuming 
Brauer's Height Zero Conjecture).

\begin{thm}
 Let $B$ be a block with an abelian defect group,
 and assume that $\irr B={\rm Irr}_0(B)$. 
 Then the following conditions are equivalent:
 \begin{enumerate}
  \item[\rm(a)] All height zero $\chi \in \irr B$ have the same degree.
  \item[\rm(b)] All height zero $\chi \in \irr B$ are modularly irreducible.
  \item[\rm(c)] $B$ is a nilpotent block.
 \end{enumerate}

\end{thm}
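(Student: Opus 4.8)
The plan is to supply the two non-trivial implications, since $(c)\Rightarrow(a)$ and $(c)\Rightarrow(b)$ have already been established in Section~\ref{nilp}. I would prove $(a)\Rightarrow(c)$ and $(b)\Rightarrow(c)$ separately; together with the Section~\ref{nilp} implications these yield all the equivalences.

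First I would dispose of $(a)\Rightarrow(c)$. Because we are assuming $\irr B={\rm Irr}_0(B)$, condition $(a)$ says that \emph{every} irreducible character of $B$ has one and the same degree. Blocks with this property are exactly the object of study of Okuyama and Tsushima, and I would invoke their result from \cite{O-T} to conclude directly that $B$ is nilpotent. It is here, inside the equal-degree input, that the genuine difficulty of controlling the action of the inertial quotient on $\irr D$ is absorbed.

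For $(b)\Rightarrow(c)$ I would argue through the decomposition and Cartan matrices. If every $\chi\in\irr B$ is modularly irreducible, then each row of the decomposition matrix of $B$ is a standard basis vector, so the Cartan matrix $C=D^{\mathrm{tr}}D$ is diagonal, with $(\phi,\phi)$-entry equal to the number of $\chi$ with $\chi^0=\phi$. Since the block algebra is indecomposable its Cartan matrix is connected, and a diagonal connected Cartan matrix forces $\ibr B=\{\phi\}$, that is, $l(B)=1$. Then $C$ is the $1\times1$ matrix $(k(B))$, whose single entry is the largest elementary divisor of $C$, which is known to equal $|D|$; hence $k(B)=|D|$. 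It remains to upgrade the numerical data $l(B)=1$ and $k(B)=|D|$ to genuine nilpotency.

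This last step is where I expect the main obstacle to lie: the equalities $l(B)=1$ and $k(B)=|D|$ do not by themselves exclude a non-trivial inertial quotient of central type, so one must bring in the abelian-defect structure. I would pass to the Brauer correspondent $b$ of $B$ in $\norm GD$, which has $D$ as a \emph{normal} defect group. Using the Alperin--McKay correspondence (available here under our standing assumption of Brauer's Height Zero Conjecture together with $\irr B={\rm Irr}_0(B)$) one transports the equality $k(B)=|D|$ to $k(b)=|D|$ with all characters of $b$ again of height zero; the normal-defect analysis then forces the inertial quotient of $b$, hence of $B$, to be trivial, so that $B$ is nilpotent. Checking that the equal-degree and equal-count data really do rule out every central-type inertial quotient, and not merely the generic one, is the delicate point, and it is exactly the orbit-theoretic phenomenon that reappears in the $p$-solvable setting of the final section.
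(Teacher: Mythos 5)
Your implication (a) $\Rightarrow$ (c) is essentially the paper's own argument: under the hypothesis $\irr B={\rm Irr}_0(B)$, condition (a) says all characters of $B$ have equal degree, the paper quotes Theorem~3 of \cite{O-T} to deduce that $B$ has inertial index one, and then the statement (1.ex.3) on page~118 of \cite{B-P} that abelian defect group plus inertial index one implies nilpotency; your direct appeal to Okuyama--Tsushima is the same move, at worst with the Brou\'e--Puig supplement left implicit. The genuine gap is in your (b) $\Rightarrow$ (c). The decomposition-matrix computation itself is correct: modular irreducibility of every $\chi$ makes the Cartan matrix diagonal, indecomposability of the block gives $l(B)=1$, and the elementary-divisor fact gives $k(B)=|D|$. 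But the ``upgrade'' you then sketch does not work as stated. First, the Alperin--McKay correspondence is an open conjecture, not an available tool, and it follows neither from the Height Zero Conjecture nor from the hypothesis $\irr B={\rm Irr}_0(B)$; so you cannot transport $k(B)=|D|$ to the Brauer correspondent in $\norm GD$ this way. Second, even granting that transport, the assertion that the ``normal-defect analysis forces the inertial quotient to be trivial'' is precisely the hard content that would need proof: a block with normal abelian defect group and inertial quotient of central type does have $l=1$, and excluding $k=|D|$ for such blocks requires a genuine orbit-counting argument which you do not supply --- you yourself flag it as the delicate point. As written, the chain of implications is not closed.

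The irony is that your own computation closes it much more simply. Once $l(B)=1$, write $\ibr B=\{\phi\}$; then modular irreducibility of every $\chi\in\irr B$ forces $\chi^0=\phi$, hence $\chi(1)=\phi(1)$ for all $\chi$. That is exactly (b) $\Rightarrow$ (a), and your (a) $\Rightarrow$ (c) finishes the proof. This is in effect what the paper does, except that it simply quotes Proposition~1 of \cite{O-T} for the equivalence of (a) and (b) rather than proving it; your Cartan-matrix argument is a perfectly good self-contained substitute for that citation, and the entire $k(B)=|D|$ detour, with its reliance on unavailable conjectures, is unnecessary.
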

 
\begin{proof}
By Proposition~1 of \cite{O-T}, we have that (a) and (b) are equivalent.
Also, by Theorem 3 of \cite{O-T}, we have that (a)
happens if and only if $B$ has inertial index one.  
Hence, it suffices to show that the nilpotent blocks
with abelian defect groups are exactly the
blocks with inertial index one and abelian defect groups.
In page 118 (1.ex.3) of \cite{B-P} it is stated that
blocks with abelian defect group and inertial index one
are nilpotent. Now, suppose that  $B$ is nilpotent with 
defect group $D$. If $(D, b_D)$ is a $B$-subpair, then $\norm G{D,b_D}/\cent GD$
is a $p'$-group  (by Theorem (9.22) of \cite{N}).
Since $B$ is nilpotent, $\norm G{D,b_D}/\cent GD$ is also a $p$-group,
and we conclude that $\norm G{D,b_D}=\cent GD$. That is, $B$ has inertial index one.
\end{proof}

\section{Normal Defect Groups} \label{normal}

In this Section we prove that
EHZD  blocks with a normal defect group are nilpotent. 

\medskip
The following
should be well-known.

\medskip

\begin{lem}\label{lndf}
 Let $B$ be a block with defect group $D\nor G$ and let $b_D$ be a block of
 $D\cent GD$ covered by $B$. Let $\tilde B$ be the Fong-Reynolds correspondent
 of $B$ over $b_D$. If $\tilde B$ is nilpotent, then $B$ is nilpotent.
\end{lem}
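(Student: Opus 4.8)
The plan is to run both nilpotency tests --- for $B$ in $G$ and for $\tilde B$ in $T$ --- through the single maximal subpair $(D,b_D)$, and to observe that the Fong--Reynolds passage from $G$ to the inertia group leaves the resulting condition on the inertial quotient untouched.

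Set $N=D\cent GD\nor G$ and let $T\le G$ be the stabilizer of $b_D$, so that $N\le T\le G$ and $\tilde B$ is the block of $T$ lying over $b_D$ with $\tilde B^G=B$; by Fong--Reynolds $\tilde B$ has the same defect group $D$ as $B$, and $D\nor T$ since $D\le N\le T$ and $D\nor G$. The first thing I would record is that the subpair normalizer is just the inertia group: as $D\nor G$ gives $\norm GD=G$, we get $\norm G{D,b_D}=\{g\in G:b_D^g=b_D\}=T$, and the identical computation inside $T$ gives $\norm T{D,b_D}=T$. Moreover $\cent TD=\cent GD$, and hence $D\cent TD=D\cent GD=N$, because $\cent GD\le N\le T$.

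Now I would invoke the description of nilpotency for a normal defect group: a block of a finite group $H$ with defect group $D\nor H$ and maximal subpair $(D,b_D)$ is nilpotent if and only if $\norm H{D,b_D}=D\cent HD$, i.e.\ if and only if its inertial quotient $\norm H{D,b_D}/D\cent HD$ is trivial. The ``only if'' is immediate from the definition of nilpotency (apply it to $Q=D$) once one recalls the standard fact that this inertial quotient is a $p'$-group, so that $\norm H{D,b_D}/\cent HD$ is a $p$-group precisely when $\norm H{D,b_D}=D\cent HD$. Applying the ``only if'' to $\tilde B$ and using the identifications above, nilpotency of $\tilde B$ forces $T=\norm T{D,b_D}=D\cent TD=N$; applying the ``if'' to $B$, the resulting equality $\norm G{D,b_D}=T=N=D\cent GD$ yields that $B$ is nilpotent. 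In effect both nilpotency conditions reduce to the single equation $T=D\cent GD$, so the implication holds (and in fact so does its converse).

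The step I expect to be the real obstacle is the ``if'' direction of this characterization: that triviality of the inertial quotient of a block with a normal defect group forces nilpotency --- equivalently, that for a normal defect group all fusion among subgroups of $D$ is already controlled at $D$, so that $D$ is normal in the Brauer category of the block. I would quote this from the classical theory of blocks with normal defect group in \cite{B-P} rather than reprove it; note that only the elementary ``only if'' half is needed for $\tilde B$, whereas the substantial ``if'' half is applied solely to $B$. Everything else is the bookkeeping above, showing that the passage from $G$ to the inertia group $T$ leaves the inertial quotient, and hence the nilpotency condition, unchanged.
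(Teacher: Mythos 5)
Your first half is exactly how the paper's proof begins, and it is correct: since $(D,b_D)$ is a $\tilde B$-subpair and $\tilde B$ is nilpotent, $T/\cent GD$ is a $p$-group, while $T/D\cent GD$ is a $p'$-group, so $T=D\cent GD$. The gap is in the second half. The ``if'' direction of your characterization --- that a block with \emph{normal} defect group $D$ and trivial inertial quotient $\norm G{D,b_D}=D\cent GD$ is nilpotent --- is precisely the step you cannot quote from \cite{B-P}. What that paper contains is (1.ex.1) (blocks of a group $G$ with $G=D\cent GD$ are nilpotent) and (1.ex.3) (abelian defect group plus inertial index one implies nilpotent); the present paper uses exactly these two facts elsewhere, and neither covers the situation here, where $D\cent GD$ may be proper in $G$ and $D$ need not be abelian. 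Worse, modulo these elementary facts your quoted statement is \emph{equivalent} to the lemma you are proving: given the lemma, $\norm G{D,b_D}=D\cent GD$ forces $\tilde B=b_D$, which is nilpotent by the (1.ex.1) computation, hence $B$ is nilpotent; conversely, your argument derives the lemma from the quoted statement. So the proposal is circular --- it defers to a citation exactly the mathematical content the lemma is meant to supply.

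That content is a genuine control-of-fusion argument over \emph{all} $B$-subpairs, not just the maximal one, and it is what the paper's proof spends its remaining paragraph on. Concretely: given a $B$-subpair $(Q,b_Q)$, normality of $D$ gives $Q\sbs D$ (Theorem (4.14) of \cite{N}); setting $e=(b_Q)^{D\cent GQ}$, Corollary (9.21) of \cite{N} produces a block $f$ of $D\cent GD$ covered by $e$ with $f^{D\cent GQ}=e$, and after replacing $(Q,b_Q)$ by a $G$-conjugate one may assume $f=b_D$. Then any $y\in\norm G{Q,b_Q}$ stabilizes $e$, so $e$ covers both $b_D$ and $(b_D)^y$, whence $(b_D)^{yz}=b_D$ for some $z\in\cent GQ$; now $T=D\cent GD$ gives $yz\in D\cent GD\sbs D\cent GQ$, so $y\in D\cent GQ$ and $\norm G{Q,b_Q}/\cent GQ$ is a $p$-group. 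Your intuition that ``for a normal defect group all fusion is controlled at $D$'' is true (in modern language, the fusion system of such a block is that of $D\rtimes E$ with $E$ the inertial quotient), but it is not available in the source you cite, and in this paper it is exactly what Lemma~\ref{lndf} establishes; a complete proof must include an argument of the above kind.
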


\begin{proof}
Let $T$ be the stabilizer of $b_D$ in $G$. Now $(b_D)^T=\tilde B$,
and $(D,b_D)$ is a $\tilde B$-subpair. Since
$\tilde B$ is nilpotent, we have that  $T/\cent GD$ is a $p$-group. Since $T/D\cent GD$
has order not divisible by $p$, we conclude that $T=D\cent GD$ and $\tilde B=b_D$.
Now, suppose that $(Q,b_Q)$ is a $B$-subpair. We want to show
that $\norm G{Q,b_Q}/\cent GQ$ is a $p$-group. For this we may replace
$(Q, b_Q)$ by any $G$-conjugate.
Since $(b_Q)^G=B$, we have that
$Q \sbs D$ (Theorem (4.14) of \cite{N}). Now let $e=(b_Q)^{D\cent GQ}$.
We have that $e^G=B$ by the transitivity of induction. 
Now, if $f$ is a block of $D\cent GD$ covered by $e$, 
we have that $e=f^{D\cent GQ}$ by Corollary (9.21) of \cite{N}.
Hence $e^G=B$ and $f^x=b_D$ for some $x \in G$.
Now, replacing $(Q,b_Q)$ by $(Q^x, (b_Q)^x)$, we may
assume that $(b_D)^{D\cent GQ}=e$.  Now,
suppose that $y$ stabilizes $(Q,b_Q)$. Then $y$ stabilizes $ (b_Q)^{D\cent GQ}=e$.
Hence $e$ covers $(b_D)^y$, and therefore $(b_D)^{yz}=b_D$ for
some $z \in \cent GQ$. Since $T=D\cent GD$, we see that $yz \in D\cent GD$
and therefore $y \in D\cent GQ$. Thus $\norm G{Q,b_Q}/\cent GQ$
is a $p$-group and $B$ is nilpotent.
\end{proof}

\begin{thm}   \label{thm:abdef}
 Suppose that $B$ has defect group $D \nor G$.
 Then the following conditions are equivalent:
 \begin{enumerate}
  \item[\rm(a)] All height zero $\chi \in \irr B$ have the same degree.
  \item[\rm(b)] All height zero $\chi \in \irr B$ are modularly irreducible.
  \item[\rm(c)] $B$ is a nilpotent block.
 \end{enumerate}
 \end{thm}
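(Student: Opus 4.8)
The plan is to prove the implications (a)$\Rightarrow$(c) and (b)$\Rightarrow$(c); that (c) implies (a) and (b) was already observed in Section~\ref{nilp} for arbitrary blocks. First I would reduce to the case where the covered block $b_D$ of $D\cent GD$ is $G$-invariant. Let $T$ be the stabiliser of $b_D$ in $G$ and let $\tilde B$ be the Fong--Reynolds correspondent of $B$ over $b_D$. Induction $\psi\mapsto\psi^G$ is a height-preserving bijection $\irr{\tilde B}\to\irr B$ that multiplies degrees by the $p'$-number $[G:T]$, so it restricts to a bijection ${\rm Irr}_0(\tilde B)\to{\rm Irr}_0(B)$ preserving both the equal-degree property and modular irreducibility; hence $\tilde B$ inherits (a) or (b) from $B$. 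Since $D\nor T$ and, by Lemma~\ref{lndf}, nilpotency of $\tilde B$ forces that of $B$, I may pass to $(T,\tilde B)$ and assume $b_D$ is $G$-invariant.

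Next I would lay out the local structure. As $\cent GD$ centralises $D$, the group $D\cent GD$ is the central product of $D$ and $\cent GD$ over $\zent D$, and $\zent D\sbs\zent{\cent GD}$. Thus $b_D$ is determined by the (trivial) block of the $p$-group $D$ together with a block $\beta$ of $\cent GD$ having the central defect group $\zent D$; such a $\beta$ is nilpotent, so it has a unique irreducible Brauer character and all of its ordinary characters share a single degree $f$. Consequently $b_D$ has a unique irreducible Brauer character $\varphi_0$ of degree $f$, and its height zero characters are exactly those $\theta$ on which $D$ acts by a linear character: they form a set $\mathcal M$ parametrised by $\irr{D/D'}$, each of degree $f$, and each satisfies $\theta^0=\varphi_0$. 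A height count (all relevant indices being prime to $p$) identifies ${\rm Irr}_0(B)$ with the set of irreducible constituents of the characters $\theta^G$, $\theta\in\mathcal M$. Put $E=G/D\cent GD$, a $p'$-group. If $E=1$ then $G=D\cent GD$ and $\norm GQ/\cent GQ$ is a $p$-group for every $Q\sbs D$, so $B$ is nilpotent; conversely if $E\ne1$ the subpair $(D,b_D)$ witnesses non-nilpotency. By Schur--Zassenhaus $G/\cent GD$ has normal Sylow $p$-subgroup $D\cent GD/\cent GD\cong D/\zent D$ with a $p'$-complement isomorphic to $E$, and a nontrivial $p'$-group of automorphisms cannot act trivially on $D/\Phi(D)$; hence (c) is equivalent to $E$ acting trivially on $D/\Phi(D)$, equivalently on $\mathcal M$.

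It remains to show that (a) and (b) each force $E$ to act trivially on $\mathcal M$; I would argue by contraposition, assuming $E\ne1$, so that some $\theta\in\mathcal M$ has proper inertia group $G_\theta<G$ and every constituent of $\theta^G$ is a height zero character induced from $G_\theta$, of degree a proper multiple of $f$. For (b): all of $\ibr B$ lie over the unique, $G$-invariant Brauer character $\varphi_0$, so the modular reduction of such an induced character is controlled by $\varphi_0^G$, which is reducible exactly because $E\ne1$; tracking this shows the induced height zero character is modularly reducible, contradicting (b). For (a): equal degrees would force the ``twisted orbit degree'' $[G:G_\theta]\,e_\theta\,f$ to be independent of $\theta\in\mathcal M$, where $e_\theta$ records the relevant projective representation of $E$; comparing the fixed point $\theta_0\in\mathcal M$ attached to the trivial character of $D$ with a nontrivial orbit contradicts this.

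The main obstacle is exactly this last point: the global numerics are deceptively consistent with $E\ne1$ --- for instance one computes
\[
 \sum_{\chi\in{\rm Irr}_0(B)}\chi(1)^2=f^2\,|E|\,|D:D'|,
\]
which holds whether or not $B$ is nilpotent --- so the equality of height zero degrees cannot be excluded by counting alone. What is really needed is that a nontrivial coprime action of $E$ on $\irr{D/D'}$ can never make all the twisted orbit degrees equal; this is an Okuyama--Tsushima-type statement (compare Theorem~3 of \cite{O-T}) now proved looking only at the height zero characters, since positive height characters are present when $D$ is nonabelian. I expect the cleanest route is to settle (b) first, using modular irreducibility and the unique Brauer character $\varphi_0$ of $b_D$, and then to deduce (a); alternatively one invokes the source-algebra structure of blocks with normal defect group to reduce the degree computation to the twisted group algebra of $D\rtimes E$.
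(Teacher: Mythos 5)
Your reduction to the case where $b_D$ is $G$-invariant (Fong--Reynolds plus Lemma~\ref{lndf}) coincides with the paper's first step, and your reformulation of (c) as ``the $p'$-group $E=G/D\cent GD$ acts trivially on $D/\Phi(D)$'' is correct. But the proposal stops short of a proof exactly at the decisive point, and you say so yourself: you need that a nontrivial coprime action of $E$ cannot make all height zero degrees in $B$ equal, you correctly observe that counting alone cannot exclude this, and you then only ``expect'' that some Okuyama--Tsushima-type statement restricted to height zero characters can be proved, or that source algebras will do it. That is a genuine gap, not a loose end: everything before it is routine reduction, and the unproved claim is precisely the content of the theorem in the invariant case. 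Your treatment of (b) suffers from the same problem (``tracking this shows the induced character is modularly reducible'' is not an argument).

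The paper closes this gap with a maneuver you did not find: it never proves a new ``height-zero-only'' statement, but arranges to quote the original Okuyama--Tsushima theorem. After the Fong--Reynolds reduction it invokes Reynolds' Theorems 6 and 7 \cite{R} to replace $G$ by a group $M$ with normal \emph{Sylow} $p$-subgroup $D$, with $D\cent MD=D\times Z$ for a central $p'$-group $Z$, and with a block $B_1$ satisfying $\irr{B_1}=\irr{M|\lambda}$ that inherits the equal-height-zero-degree hypothesis. Since $D$ is a normal Sylow $p$-subgroup, the height zero characters of $B_1$ are exactly those with $D'$ in their kernel; by \cite[Theorem (9.9.b)]{N}, $B_1$ dominates a block $\bar B_1$ of $M/D'$ with abelian normal defect group $D/D'$, and (by It\^o, all characters of $M/D'$ have $p'$-degree) every character of $\bar B_1$ is a height zero character of $B_1$. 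Thus \emph{all} irreducible characters of $\bar B_1$, not just the height zero ones, have equal degree, so Theorem 3 of \cite{O-T} applies verbatim and yields inertial index one, i.e.\ $(D/D')\cent{M/D'}{D/D'}=M/D'$. Since a $p'$-group acts trivially on $D$ if and only if it acts trivially on $D/D'$, this forces $M=D\cent MD$, hence $G=D\cent GD$, and $B$ is nilpotent by \cite{B-P}. In short, the missing idea is the passage to the quotient by $D'$ (made legitimate by Reynolds' embedding), which converts your unproved ``twisted orbit degree'' claim into the already-known theorem of Okuyama--Tsushima; the paper handles (b) by the analogous argument.
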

\medskip

\begin{proof}
We already know that (c) implies (a) and (b).
We prove by  induction on $|G|$ that (a) implies (c). In an analog way,
we could prove that (b) implies (c).
Let $b_D$ be a block of $D\cent GD$ inducing
$B$ with defect group $D$. Let $T$ be the stabilizer in $G$ of
$b_D$, and let $\tilde B$ be the Fong-Reynolds
correspondent of $B$ over $b_D$. If all height zero $\chi \in \irr B$ have the
same degree, then the same happens in $\tilde B$ by the Fong-Reynolds
correspondence \cite[Theorem (9.14)]{N}.
 By Lemma (\ref{lndf}), we may assume that $T=G$. 
Now by Reynolds Theorems 6 and 7 of \cite{R},
there exists a group $M$ with normal Sylow $p$-subgroup
$D$ such that $D\cent MD= D \times Z$, where $Z$
is central, and $M/D\cent MD \cong G/D\cent GD$.
Also  $M$ has a block $B_1$, with defect group $D$, having all height zero irreducible characters
of the same degree
and such that $\irr{B_1}= \irr{M|\lambda}$ for some
irreducible $\lambda \in \irr Z$. (We use $\irr{M|\lambda}$ to
denote the irreducible characters of $M$ lying over $\lambda$.)
Since $D$ is normal in $M$, the height zero characters of $B_1$
are exactly those irreducible characters
of $B_1$ of $p'$-degree. Hence,
these are exactly the irreducible
characters of $B_1$ having $D'$ in their kernel. Now, by Theorem (9.9.b) of \cite{N},
we have that $B_1$ contains a block $\bar B_1$
of $M/D'$ with defect group $D/D'$. This block $\bar B_1$
has all irreducible characters of the same degree. By
Theorem 3 of \cite{O-T}, we conclude
that $\bar B_1$ has inertial index one. Thus
 $$(D/D' )\cent {M/D'}{D/D'}=M/D' \, .$$ 
 Using that a $p'$-group acts trivially on $D$ if
 and only if it acts trivially on $D/D'$,
 we easily  conclude
that $M=D\cent MD$ and therefore $G=D\cent GD$.
In this case, the block is nilpotent. (See (1.ex 1) on page 118 of \cite{B-P}.)
\end{proof}

\section{Quasi-Simple Groups} \label{quasisimple}

From now until the last section of the paper,
we are  devoted to proving the following result.

\begin{thm}   \label{main}
 Let $S$ be a finite non-abelian simple group, $G$ a quasi-simple group with
 $G/Z(G)\cong S$, and $p$ a prime. Assume that $B$ is a $p$-block of $G$
 such that all characters in $\Irr_0(B)$ have the same degree. Then the defect
 group of $B$ is abelian and thus $B$ is nilpotent, unless possibly one of
 the following holds:
 \begin{enumerate}
  \item $B$ is a faithful block for the 2-fold covering group $2.\fA_n$ of
   the alternating group $\fA_n$ ($n\ge14$) (a so-called spin-block), or
  \item $B$ is a quasi-isolated block for an exceptional group of Lie
   type and $p$ is a bad prime.
 \end{enumerate}
\end{thm}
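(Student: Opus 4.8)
The plan is to run through the families given by the classification of finite simple groups, and in each case to show that an EHZD block either has abelian defect group or lies in one of the two listed exceptions. Once the defect group $D$ of $B$ is known to be abelian, nilpotency is immediate from Section~\ref{abelian}: for abelian $D$ the hypothesis $\irr B=\Irr_0(B)$ there is exactly the easy (now CFSG-verified) direction of Brauer's Height Zero Conjecture, so the equivalences of that section apply. Hence the whole content is to prove that \emph{EHZD forces abelian defect}, outside cases~(1) and~(2). The uniform tactic is to assume the defect group is non-abelian and to exhibit two characters in $\Irr_0(B)$ of different degree; it is often convenient to use the result from \cite{O-T} that, for abelian defect, EHZD is equivalent to inertial index one.

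\textbf{Sporadic groups and defining characteristic.} For the sporadic groups, their covers, and the finitely many small-rank or exceptional-cover cases, I would inspect the known decomposition matrices and character tables directly. For $G$ of Lie type in its defining characteristic $p$, the blocks of positive defect are essentially just the principal block, whose defect group is a full Sylow $p$-subgroup $U$. If $U$ is non-abelian I would produce two unipotent characters of $p'$-degree in this block, e.g.\ the trivial character together with a unipotent constituent of the permutation character on a Borel subgroup, whose degree is a nontrivial polynomial in $q$; these differ, so $B$ is not EHZD. The remaining defining-characteristic groups have (elementary) abelian Sylow $p$-subgroups and fall under Section~\ref{abelian}.

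\textbf{Alternating and symmetric groups.} For blocks of $\fS_n$ and $\fA_n$ themselves I would use the description of $p$-blocks by $p$-core and weight $w$: the defect group is abelian exactly when $w<p$, and then Section~\ref{abelian} applies. For $w\ge p$ the defect group is non-abelian, and I would refute EHZD by exhibiting two partitions with the given core and weight whose characters have $p'$-degree but different hook-length degrees. The faithful (spin) blocks of $2.\fA_n$ are left untreated: their height zero degrees are governed by bar-partition combinatorics, and I do not see how to exclude EHZD in general, which is precisely why they appear as exception~(1).

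\textbf{Groups of Lie type in non-defining characteristic.} This is the main part, and the main obstacle. Writing $G$ as the fixed points of a connected reductive group and taking $p\nmid q$ with $e$ the order of $q$ modulo $p$, I would invoke Lusztig's Jordan decomposition: the characters of $B$ correspond to a semisimple $p'$-element $s\in G^{*}$ together with unipotent characters of $C_{G^{*}}(s)$ in a fixed unipotent block, each of degree $[G^{*}:C_{G^{*}}(s)]_{p'}$ times the degree of the corresponding unipotent character. Thus $B$ is EHZD if and only if these unipotent characters of $C_{G^{*}}(s)$ all share one degree. When $s$ is not quasi-isolated, the theorem of Bonnaf\'e and Rouquier gives a defect-preserving Morita equivalence with a block of a proper Levi subgroup, so I would conclude by induction on $|G|$, reducing to unipotent blocks ($s=1$) and to quasi-isolated $s$. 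For these at good primes I would use the work of Cabanes and Enguehard, together with the parametrization of the unipotent characters in the block by the irreducible characters of its relative Weyl group $W_B$ with degrees given by the associated generic degrees: if $W_B\ne1$ it has irreducible characters of different $a$-value, producing two height zero characters of different degree, so EHZD forces $W_B=1$, which in turn forces the defect group to be the abelian Sylow $\Phi_e$-torus part (here the results of Kessar and Malle are the key input). The genuinely hard residue is the quasi-isolated blocks at bad primes for exceptional groups, where the clean relative-Weyl-group parametrization and the good-prime machinery fail; these must be examined case by case, and the cases where equal degrees cannot be excluded are exactly exception~(2). I expect essentially all the difficulty to concentrate in this bad-prime quasi-isolated analysis, alongside the spin blocks of~(1).
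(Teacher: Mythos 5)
Your skeleton is essentially the paper's: reduce to proving that the defect group is abelian and that $B$ satisfies the height zero conjecture, then refute EHZD for non-abelian defect by exhibiting two height-zero characters of different degrees, with Bonnaf\'e--Rouquier disposing of the non-quasi-isolated series. (One structural difference: the paper, which predates the Kessar--Malle proof of the ``abelian implies all heights zero'' direction, cannot outsource the height zero conjecture as you do and instead verifies it block by block, e.g.\ via Lusztig's result when $C_{G^*}(s)^\circ$ is a torus.) However, there are concrete gaps at the points where the real work happens. In defining characteristic, your claim that the positive-defect blocks are ``essentially just the principal block'' is false exactly for the quasi-simple groups the theorem is about: by Humphreys' theorem \cite{Hum} the positive-defect blocks are all of full defect and in bijection with $\Irr(Z(G))$, and your trivial-plus-Borel-constituent argument only produces characters in the principal block. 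The faithful blocks of $\SL_n(q)$, $\SU_n(q)$, $\Sp_4(q)$, the spin groups and the covers of $E_6$, $\tw2E_6$, $E_7$ need a separate construction; the paper uses semisimple characters of degree $|G^*:C_{G^*}(s)|_{p'}$ attached to regular elements of Zsigmondy prime order in two well-chosen maximal tori, with ad hoc arguments for $A_1$, $A_2$, $\tw2A_2$, $B_2$. In non-defining characteristic, the sentence ``if $W_B\ne1$ it has irreducible characters of different $a$-value, producing two height zero characters of different degree'' asserts precisely the hard part. Height-zero characters correspond, via the congruence in Theorem~\ref{thm:HLL}, to characters $\phi$ of the relative Weyl group with $p\nmid\phi(1)$, so you must produce two \emph{$p'$-degree} (in practice linear) characters of $W_B$ whose values $D_\phi(q)$ differ; and distinct degree \emph{polynomials} do not yield distinct \emph{degrees}: for classical groups there are infinitely many specializations where different polynomials collide, which is why the paper proves the $q$-adic valuation estimates of Proposition~\ref{prop:degclass} for the explicit cyclotomic Hecke algebra parameters, including a genuine exceptional case at $q=2$, $p=3$ in types $D_n$, $\tw2D_n$.

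Two further holes. Bad primes are not an exceptional-group phenomenon: $p=2$ is bad for $B_n$, $C_n$, $D_n$, $\tw2D_n$, these cases are \emph{not} covered by exception~(2), and the good-prime Cabanes--Enguehard parametrization you rely on does not apply to them; the paper handles them using that all unipotent characters lie in the principal $2$-block and Enguehard's theorem that $\cE_2(G,s)$ is a single block with defect group a Sylow $2$-subgroup of $C_{G^*}(s)^\circ$. And for alternating groups, descent from $\fS_n$ to $\fA_n$ is not automatic: two distinct height-zero degrees $d_1<d_2$ in a block of $\fS_n$ can become equal in the covered block of $\fA_n$ when $d_2=2d_1$ and the larger character splits, which is exactly why the paper's Proposition~\ref{prop:sym} tracks self-duality of the labelling partitions and the ratio $d_2/d_1$; your hook-length argument would have to do the same. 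Smaller omissions: disconnected centralizers $C_{G^*}(s)$ at good primes (handled in the paper by Bonnaf\'e's bound on component groups together with Proposition~\ref{prop:discon}), and the table check for faithful blocks of $2.\fA_n$ with $n\le13$, which is what limits exception~(1) to $n\ge14$.
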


Theorem~\ref{thm:abdef} shows that in order to check Theorem~\ref{main} it
suffices to prove the following for any block $B$ of $G$ all of whose
characters in $\Irr_0(B)$ have the same degree:
\begin{enumerate}
 \item the defect group of $B$ is abelian, and
 \item $B$ satisfies the Height Zero Conjecture.
 \end{enumerate}

In order to do this, we invoke the classification of finite
simple groups as well
as the Deligne--Lusztig theory of characters of finite reductive groups and
the fundamental results of Bonnaf\'e--Rouquier and Cabanes-Enguehard on blocks.
It will be given in several steps in the subsequent sections. The proof in
the case of alternating groups will lead to a relative hook formula for the
character degrees in $p$-blocks of the symmetric group.

\section{Unipotent blocks} \label{sec:unip}

In this section we consider the unipotent blocks of finite groups of Lie type.
We introduce the following standard setup: Any non-exceptional
Schur covering group of a finite simple group of Lie type can be
obtained as $G:=\bG^F$, where $\bG$ is a simple algebraic group of
simply-connected type over the algebraic closure of a finite field, and
$F:\bG\rightarrow\bG$ a Frobenius map with finite group of fixed points
$\bG^F$, with the sole exception of the Tits group $\tw2F_4(2)'$, which will
be treated later in Proposition~\ref{prop:spor}. Let $\bG^*$
denote a group in duality with $\bG$ and with corresponding Frobenius map
$F^*:\bG^*\rightarrow\bG^*$ and fixed points $G^*:={\bG^*}^{F^*}$. Let
$r$ denote the defining characteristic of $G$ and $q$ the absolute value of
all eigenvalues of Frobenius on the character lattice of an $F$-stable torus
of $G$, a half-integral power of $r$. We will then also write $G=G(q)$ in order
to indicate the corresponding value of $q$. \par
By the fundamental work of Lusztig, the irreducible characters of $G$ can be
partitioned into so called Lusztig series
$$\Irr(G)=\coprod_{(s)}\cE(G,s)$$
indexed by conjugacy classes of semisimple elements $s$ in $G^*$. The
characters in the Lusztig series $\cE(G):=\cE(G,1)$ corresponding to the
trivial element in $G^*$ are the so-called unipotent characters. These can be
viewed as being the building blocks of the ordinary character theory of finite
groups of Lie type. Again by results of Lusztig, the unipotent characters can
be parametrized by a set depending only on the type of $G$, that is, on the
Weyl group of $\bG$ together with the action of $F$ on it, not on $q$ or~$r$.
Moreover, their degrees are given by the value at $q$ of polynomials in one
indeterminate of the form
$$\frac{1}{n}x^{a}\prod_{i=1}^m\Phi_i(x)^{a_i},$$
where $n$ is either a power of~$2$ or a divisor of~$120$, and $\Phi_i(x)$
denotes the $i$th cyclotomic polynomial over $\QQ$ (see for example Chapter~13
of \cite{Ca}). We write $\Deg(\gamma)\in\QQ[x]$ for the degree polynomial of a
unipotent character $\gamma\in\cE(G)$.  \par

\subsection{Specializations of degree polynomials} \label{subsec:exc}
We start by investigating specializations of degree polynomials of unipotent
characters.
We first discuss the question when two different degree polynomials $f_1,f_2$
can lead to the same character degree $f_1(q)=f_2(q)$.

\begin{lem}   \label{lem:speceq}
 Let $n_1,n_2,m\in\NN$, $a_i\in\ZZ$ with $a_m\ne0$, $q>1$ a prime power, and
 assume that
 $$n_1=n_2\prod_{i=1}^m \Phi_i(q)^{a_i}.$$
 \begin{itemize}
  \item[\rm(a)] If both $n_1,n_2$ are powers of~2, then $(m,q)=(6,2)$ and
   $(a_3,\ldots,a_6)=(0,0,0,-a_2)$, or $(m,q)=(2,3)$, or $m=1$, $q=2^f+1$.
  \item[\rm(b)] If both $n_1,n_2$ are divisors of~120,
   then either $m=4$ and $q\in\{2,3\}$, or $m=2$ and
   $q\in\{2,3,4,5,7,9,11\}$, or $m=1$, $q-1|120$.
 \end{itemize}
\end{lem}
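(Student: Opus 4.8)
The plan is to convert the multiplicative relation $n_1=n_2\prod_{i=1}^m\Phi_i(q)^{a_i}$ into a statement about primitive prime divisors and then finish with a short, explicit Diophantine search. The governing tool will be the theorem of Zsygmondy (and Bang): for $q>1$ and $i\ge1$ the number $q^i-1$ possesses a \emph{primitive} prime divisor $\ell$, i.e.\ one with $\mathrm{ord}_\ell(q)=i$, except exactly when $i=1$ and $q=2$, when $i=2$ and $q+1$ is a power of~$2$, or when $(i,q)=(6,2)$. Such an $\ell$ divides $\Phi_i(q)$, satisfies $i\mid\ell-1$ (so $\ell\equiv1\pmod i$ and $\ell\ge i+1$), and divides $\Phi_j(q)$ for $j\ne i$ only when $j=i\ell^k$. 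Thus, within any bounded range of indices, distinct cyclotomic factors contribute pairwise disjoint primitive primes, which therefore \emph{cannot cancel} in the product $\prod_{i=1}^m\Phi_i(q)^{a_i}$.

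First I would read off the allowed primes on the left-hand side: only $2$ in case~(a), and only $2,3,5$ in case~(b) since $120=2^3\cdot3\cdot5$. By the non-cancellation observation, every primitive prime divisor $\ell_i$ of a factor $\Phi_i(q)$ with $a_i\ne0$ must lie in this allowed set. The divisibility $i\mid\ell_i-1$ then forces the index to be small: $\ell_i=2$ requires $i\mid1$, $\ell_i=3$ requires $i\mid2$, and $\ell_i=5$ requires $i\mid4$. Hence in case~(a) only $i=1$ can carry an admissible primitive prime, and in case~(b) only $i\in\{1,2,4\}$ can. Any other index $i$ with $a_i\ne0$ must therefore be one for which $\Phi_i(q)$ has \emph{no} primitive prime divisor at all, i.e.\ a Zsygmondy exception. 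This already yields $a_3=a_5=0$, eliminates all $i\ge3$ except $i=4$ and the exceptional pair $(i,q)=(6,2)$, and so bounds $m$.

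It remains to pin down $q$, and here I would exploit that $n_1,n_2$ are not merely smooth but \emph{bounded}: a divisor of $120$ has $3$-adic and $5$-adic valuation at most~$1$, so the whole product has $v_3,v_5\in\{-1,0,1\}$ and $v_2\in\{-3,\dots,3\}$, while a power of~$2$ forces the odd part of the product to be trivial. Combined with the hypothesis that $q$ is a prime power, this separates the surviving cases cleanly. For instance at $m=4$ the factor $\Phi_4(q)=q^2+1$ must be a $\{2,5\}$-value, and the bound $v_5\le1$ excludes $q$ with $v_5(q^2+1)\ge2$ (so that, e.g., $\Phi_4(3)=10$ survives but $\Phi_4(7)=50$ does not), leaving $q\in\{2,3\}$; at $m=2$ one asks which prime powers make $q\pm1$ into $\{2,3,5\}$-values compatible with the valuation bounds, producing the list $\{2,3,4,5,7,9,11\}$; at $m=1$ one gets $q-1\mid120$; and in case~(a) one solves when $q\pm1$ are powers of~$2$ and records the single coincidence $\Phi_6(2)=\Phi_2(2)=3$, which forces $(a_3,\dots,a_6)=(0,0,0,-a_2)$ at $(m,q)=(6,2)$.

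The main obstacle is precisely this final, explicit stage. Zsygmondy delivers finiteness cheaply, but extracting the \emph{exact} lists requires solving several small exponential Diophantine smoothness conditions, chiefly $q\pm1=2^a3^b5^c$ and $q^2+1=2^a5^b$, and then discarding the solutions that violate the $v_2,v_3,v_5$ bounds inherited from~$120$ or the restriction that $q$ be a prime power. A secondary technical point that must be handled carefully is the bookkeeping around the Zsygmondy exceptions together with the possibility $j=i\ell^k$: one has to verify that, throughout the relevant bounded index range, no primitive prime of a low-index $\Phi_i$ reappears in a higher $\Phi_j$, so that the non-cancellation argument underpinning the whole reduction is genuinely valid.
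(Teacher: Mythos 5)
Your overall strategy is the same as the paper's: Zsigmondy primitive prime divisors, the congruence $i\mid\ell-1$ forcing the admissible indices to be small, and then an explicit small-case Diophantine check. However, there is a genuine gap in your key reduction step. You assert that \emph{every} primitive prime divisor $\ell_i$ of a factor $\Phi_i(q)$ with $a_i\ne0$ must lie in the allowed set, justified by non-cancellation ``within any bounded range of indices.'' At that point of your argument no bound on $m$ is available --- the bound on $m$ is supposed to be the \emph{conclusion} of this very step --- so the justification is circular. Moreover, cancellation is a real threat when $m$ is unbounded: a primitive prime $\ell$ of $\Phi_i(q)$ also divides $\Phi_{i\ell^k}(q)$, so for instance with $q=2$ the prime $7=\Phi_3(2)$ divides $\Phi_{21}(2)=7\cdot337$, and a relation with $a_3,a_{21}\ne0$ can have trivial $7$-valuation. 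The way out --- and it is exactly what the paper does --- is to anchor the argument at the top index $m$, where $a_m\ne0$ by hypothesis: a primitive prime $p_m$ of $\Phi_m(q)$ divides $\Phi_j(q)$ only for $j=mp_m^k\ge m$, and no index larger than $m$ occurs in the product, so $p_m$ survives unconditionally and must lie in $\{2\}$ resp.\ $\{2,3,5\}$, or fail to exist (the Zsigmondy exceptions). This bounds $m$ first; only after that does your ``every index'' claim become valid, since a prime $\ell\ge7$ primitive for $\Phi_i(q)$ could only reappear in $\Phi_j(q)$ with $j\ge 7i>m$.

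A second, smaller point: the verification you defer at the end --- that ``no primitive prime of a low-index $\Phi_i$ reappears in a higher $\Phi_j$'' throughout the bounded range --- is false as stated. For the \emph{allowed} primes such reappearances do occur inside the range: $3$ divides both $\Phi_1(q)$ and $\Phi_3(q)$ when $q\equiv1\pmod 3$, and both $\Phi_2(q)$ and $\Phi_6(q)$ when $q\equiv2\pmod 3$ (indeed $\Phi_2(2)=\Phi_6(2)=3$ is precisely the coincidence behind the $(m,q)=(6,2)$ exception), while $2$ divides $\Phi_1(q),\Phi_2(q),\Phi_4(q)$ for odd $q$. What is true, and what your final $v_2,v_3,v_5$ bookkeeping actually requires, is only that primes \emph{outside} $\{2,3,5\}$ cannot reappear within the range $1\le j\le m$ once $m\le6$ is known; the allowed primes must instead be tracked across the several cyclotomic factors they simultaneously divide, as the paper does in its case-by-case analysis for $m=4$, $m=2$ and $m=1$.
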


\begin{proof}
First assume that there is a Zsigmondy primitive prime divisor $p_m$ of
$\Phi_m(q)$, that is, $p_m$ divides $\Phi_m(q)$, but it does not divide
$\Phi_i(q)$ for $i<m$. This is the case unless $m=1$, or $m=2$ and $q+1$ is a
power of~2, or $(m,q)=(6,2)$. Clearly, $p_m\ne2,3$ if $m\ne1,2$, and $p_m\ne5$
if $m\ne1,2,4$.
Comparing prime factorizations on both sides we see that $m\in\{1,2,4\}$ if
$n_1n_2$ is divisible by~5, and $m\in\{1,2\}$ if not. \par
Further assume that $m=4$, so $5|(q^2+1)$. As any two of $q-1,q+1,q^2+1$ have
$\gcd$ at most~2, their only prime divisors can be 2,3 and~5, and we must have
$q^2+1\le240$. It is easy to check that this only happens for $q\in\{2,3\}$.
Next assume that $m=2$, $q+1$ is not a power of~2, and~5 is the largest
Zsigmondy prime for $q+1$. As before $q+1\le240$, and $q-1,q+1$ are only
divisible by 2,3 and~5. We arrive at $q\in\{4,9\}$. Similarly, if
$m=2$ and~3 is the only Zsigmondy prime for $q+1$, then $q\in\{2,5,11\}$.
\par
Thus we may assume that there is no Zsigmondy prime for $\Phi_m(q)$, that is,
$m=1$, or $m=2$ and $q+1$ is a power of~2, or $(m,q)=(6,2)$. In the latter
case, using Zsigmondy primes for $\Phi_3(q),\Phi_4(q),\Phi_5(q)$ we see that
$a_3=a_4=a_5=0$. Moreover, as $\Phi_2(2)=3=\Phi_6(2)$, these two factors must
occur with opposite exponent. If $m=2$, then both $q-1,q+1$ have to be powers
of~2, whence $q=3$, or both have to divide~240. The latter implies that
$q\in\{3,7\}$. Finally, when $m=1$ then $q-1$ is a power of~2 or a divisor
of 120. This proves the claim.
\end{proof}

\begin{prop}   \label{prop:degexc}
 Let $f_1,f_2\in\QQ[x]$ be the degree polynomials of two unipotent characters
 of an exceptional group of Lie type $G=G(q)$. If $f_1(q)=f_2(q)$ for some
 prime power $q>1$, respectively square root of some odd power of a prime for
 the Suzuki or Ree groups, then $f_1=f_2$, or
 $$G\in\{G_2(2),\ \tw2B_2(2),\ \tw2F_4(2),\ \tw2G_2(3)\}.$$
\end{prop}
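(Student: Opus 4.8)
The plan is to convert any coincidence of degrees into the purely numerical relation controlled by Lemma~\ref{lem:speceq}. Write the two degree polynomials as $f_j=\frac{1}{n_j}\,x^{a_j}\prod_i\Phi_i(x)^{b_{ji}}$ for $j=1,2$, where each $n_j$ is a power of $2$ or a divisor of $120$, and suppose $f_1(q)=f_2(q)$ while $f_1\ne f_2$. The first step is to remove the defining-characteristic factor $x^{a_j}$. Since $\Phi_i(q)\equiv\pm1\pmod r$ for every $i\ge1$, each $\Phi_i(q)$ is prime to $r$, so the $r$-adic valuation of the integer $f_j(q)$ equals $e\,a_j-v_r(n_j)$, where $q=r^e$. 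Comparing valuations in $f_1(q)=f_2(q)$ yields the identity $e\,(a_1-a_2)=v_r(n_1)-v_r(n_2)$.

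When $r\nmid n_1n_2$ --- in particular whenever $r\ge7$, since then $r$ divides neither a power of $2$ nor a divisor of $120$ --- the right-hand side vanishes and $a_1=a_2$, so the powers of $x$ cancel from $f_1(q)=f_2(q)$. Putting $c_i:=b_{1i}-b_{2i}$ and letting $m$ be the largest index with $c_m\ne0$ (such an $m$ exists, for otherwise $f_1=f_2$), we arrive precisely at $n_1=n_2\prod_{i=1}^m\Phi_i(q)^{c_i}$ with $c_m\ne0$, the hypothesis of Lemma~\ref{lem:speceq}. Because the $n_j$ are simultaneously powers of $2$ (for the Suzuki and Ree groups) or simultaneously divisors of $120$ (for the remaining types), that lemma pins $q$ down to a short explicit list, and I would then read off the actual unipotent degrees from the tables in Chapter~13 of \cite{Ca} to check that no genuine coincidence with $f_1\ne f_2$ survives. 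The Suzuki and Ree groups, whose degrees are polynomials in a variable $x$ equal to the square root of an odd power of $r$, are handled by the same device applied to their short degree lists.

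The main obstacle is the small-characteristic range $r\in\{2,3,5\}$, where $r$ may divide $n_1n_2$ and the clean equality $a_1=a_2$ can fail. Here I would exploit the valuation identity directly: for the divisor-of-$120$ types one has $v_2(n_j)\le3$ and $v_3(n_j),v_5(n_j)\le1$, so $a_1\ne a_2$ forces $e\,|a_1-a_2|\le3$, hence $q=r^e$ lies in the small set $\{2,3,4,5,8\}$; and the complementary case $a_1=a_2$ again reduces to Lemma~\ref{lem:speceq}. For the genuinely $2$-power-denominator groups $\tw2B_2$ and $\tw2F_4$ in characteristic~$2$, and $\tw2G_2$ in characteristic~$3$, where $v_2(n_j)$ can be large, I would instead appeal to their very short explicit degree lists. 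In each of these finitely many residual cases a direct inspection of the degrees produces a coincidence with $f_1\ne f_2$ exactly for $G\in\{G_2(2),\ \tw2B_2(2),\ \tw2F_4(2),\ \tw2G_2(3)\}$, the asserted list, and forces $f_1=f_2$ otherwise. The delicate bookkeeping point throughout is the outcome $m=1$ of Lemma~\ref{lem:speceq}, where only $\Phi_1=x-1$ occurs and $q$ is a priori unbounded; this must be ruled out by verifying that no two unipotent degree polynomials of an exceptional group differ merely by a power of $x-1$ and a rational constant.
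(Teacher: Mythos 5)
Your proposal follows essentially the same route as the paper: write each degree polynomial as $\frac{1}{n_j}x^{a_j}\prod_i\Phi_i(x)^{b_{ji}}$, use coprimality of $q$ with the cyclotomic values to split off the $x$-power (your $r$-adic valuation identity is exactly the paper's observation that the $q$-part and the prime-to-$q$ part of $n_1/n_2$ must separately have numerator and denominator dividing $120$), feed the cyclotomic part into Lemma~\ref{lem:speceq}, and finish by a finite check against the tables in \cite[Chap.~13]{Ca}. One correction, though, to what you call the delicate point: in case~(b) of Lemma~\ref{lem:speceq} the outcome $m=1$ is \emph{not} unbounded --- the stated conclusion is $q-1\mid 120$, hence $q\le121$; this is precisely how the paper gets its explicit bounds ($q\le121$ for $E_8$, and $q\le25$ for the other types, since there $n_j\mid24$), after which everything is a finite table check. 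So your proposed extra verification that no two degree polynomials differ merely by a power of $x-1$ and a rational constant is unnecessary (it is only case~(a), with $m=1$ giving $q=2^f+1$, that leaves $q$ unbounded, and you never need case~(a) for the bulk of the argument). A second, minor, inaccuracy: the denominators for the Suzuki and Ree groups are not literally powers of $2$ (they involve $\sqrt2$, $\sqrt3$ and constants such as $6$ or $12$), so Lemma~\ref{lem:speceq}(a) is not the right tool there; but your fallback of directly inspecting their very short degree lists is sound and is where the exceptions $\tw2B_2(2)$, $\tw2F_4(2)$, $\tw2G_2(3)$ arise, matching the paper's conclusion.
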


\begin{proof}
Write $f_j=\frac{1}{n_j}x^{a_j}\prod_{i=1}^{m_j}\Phi_i(x)^{a_{i,j}}$ for
$j=1,2$. According to \cite[Chap.~13]{Ca}, $n_j|120$ for $G=E_8$,
and $n_j|24$ else. Now $f_1(q)=f_2(q)$ implies that
$q^{a_1-a_2}\prod_{i=1}^{m_j}\Phi_i(q)^{a_{i,1}-a_{i,2}}\in\QQ$ has numerator
and denominator a divisor of~120. Since the second factor is coprime to $q$,
this holds in fact for both factors. Then Lemma~\ref{lem:speceq}(b) shows that
$q\le 121$ for $G=E_8$, and $q\le25$ for the other types. For these finitely
many values of $q$ and finitely many types the assertion can be checked
from the tables of degree polynomials. In fact, the additional restrictions
in Lemma~\ref{lem:speceq}(b) allow to restrict the number of necessary
computations even further.
\end{proof}

Note that none of the exceptions $G_2(2),\tw2B_2(2),\tw2G_2(3),\tw2F_4(2)$ is
a perfect group.
Unfortunately, there are infinitely many exceptions to the conclusion of the
previous proposition in the case of classical groups, so we will choose a
different approach for those.

\subsection{$e$-symbols and degrees} \label{subsec:symbs}
We need to give a brief recall of the notion of $e$-symbols and associated
degree, see \cite{MaU}. \par
Let $e\ge1$ be an integer. An {\em $e$-symbol} is a sequence
$S=(S_1,\ldots,S_e)$ of $e$ strictly increasing sequences
$S_i=(s_{i1}<\ldots<s_{im})$ of non-negative integers of equal
length~$m$. The {\em rank} of an $e$-symbol $S$ is defined as
$$\rk(S):=\sum_{s\in S}s -e\binom{m}{2}.$$
We define an equivalence relation on $e$-symbols as the reflexive, symmetric
and transitive closure of the relation $\sim$ given by
$$(S_1,\ldots,S_e)\sim (S_1',\ldots,S_e')\ \Longleftrightarrow\ 
  S_i'=(0,s_{i1}+1,\ldots,s_{im}+1).$$
There is a natural 1-1 correspondence between $e$-tuples of partitions
$\pi=(\pi_1,\ldots,\pi_e)\vdash r$ of $r$ and equivalence classes
of $e$-symbols of rank~$r$, as follows: by adding zeros we may assume that all
$\pi_i=(\pi_{i1}\le\ldots\le\pi_{im})$ have the same number of
parts. It is easily verified that $S(\pi)=(S_1,\ldots,S_e)$, with
$S_i:=(\pi_{i1},\pi_{i2}+1,\ldots,\pi_{im}+m-1)$ for $1\le i\le e$,
has indeed rank~$r$, and is well-defined up to equivalence. 
\par
Let $(v;u_1,\ldots,u_e)$ be indeterminates over $\QQ$. For an $e$-symbol $S$
we define
$$f_S:=(-1)^{c(S)}\frac{\displaystyle{(v-1)^r\prod_{i=1}^e u_i^r\cdot
   \prod_{i=1}^e\,\prod_{j=i}^e\,\prod_{s\in S_i}\,
   \prod_{\stackrel{t\in S_j}{s>t\text{ if }i=j}}(v^su_i-v^tu_j)}}
  {\displaystyle{v^{a(S)}\prod_{i<j}(u_i-u_j)^m\cdot
   \prod_{i,j=1}^e\,\prod_{s\in S_i}\,\prod_{k=1}^s(v^ku_i-u_j)}},$$
where
$$c(S):=\binom{e}{2}\binom{m}{2}+r(e-1),\quad\text{and }\ 
  a(S):=\sum_{i=1}^{m-1}\binom{ei}{2}$$
(see \cite[(5.12)]{MaU}). It can be checked that the rational function $f_S$
only depends on the equivalence class of the $e$-symbol $S$. We shall also
write $f_\pi$ for $f_S$ with $S=S(\pi)$.
\par
The following connection to the imprimitive complex reflection group
$G(e,1,r)\cong C_e\wr\fS_r$ will be important for us. The irreducible complex
characters of the wreath product $G(e,1,r)$ can be parametrized by $e$-tuples
of partitions $(\pi_1,\ldots,\pi_e)$ of $r$ (see for example \cite[(2A)]{MaU}),
hence by equivalence classes of $e$-symbols of rank~$r$. Now let
$\cH=\cH(W,\bu)$ denote the cyclotomic Hecke algebra for $W=G(e,1,r)$ with
parameters $\bu=(v;u_1,\ldots,u_e)$. This carries a canonical symmetrizing
form. By the main result of Geck--Iancu--Malle \cite{GIM} the Schur element
(with respect to this form) of
the irreducible character of $\cH$ indexed by the multipartition
$(\pi_1,\ldots,\pi_e)\vdash r$ is $f_S^{-1}$, where $S=S(\pi_1,\ldots,\pi_e)$
(see Conjecture~2.20 in \cite{MaU}).
\par
In particular, specializing $v$ to~1 and $u_j$ to the $e$th roots of unity
$\zeta_j:=\exp(2\pi ij/e)$ we obtain
$$f_{S}(1;\zeta_1,\ldots,\zeta_e)=\frac{d_S}{|G(e,1,r)|}
  =\frac{d_S}{e^r\, r!},$$
where $d_S$ denotes the degree of the irreducible character of $G(e,1,r)$
indexed by $S$.
\par
For later use let's record the following special cases. If $r=1$,
so $G(e,1,r)$ is the cyclic group $C_e$, then a multipartition
$\pi=(\pi_1,\ldots,\pi_e)\vdash r$ is uniquely determined
by the unique $i$ such that $\pi_i=(1)$. The corresponding $e$-symbol $S$ has
$S_i=(1)$, $S_j=(0)$ for $j\ne i$, and
$$f_{S}=\prod_{j\ne i}\frac{ u_j}{u_j-u_i}$$
(compare \cite[Bem.~2.4]{BM93}).  \par
More generally the $e$-symbols with $S_i=(r)$, $S_j=(0)$ for $j\ne i$
parametrize linear characters $\phi_i$ of $G(e,1,r)$, for $1\le i\le e$.
Evaluation of the defining formula shows that then
\begin{equation} \label{eq:linchar}
  f_{S}=\prod_{k=1}^r\left(\frac{v-1}{v^k-1}\cdot\prod_{j\ne i}
        \frac{u_j}{u_j-v^{k-1}u_i}\right).
\end{equation}

\subsection{$d$-Harish-Chandra series and cyclotomic Hecke algebras}
\label{subsec:class}
The blocks of finite groups of Lie type are closely related to so-called
$d$-Harish-Chandra series. Let $G$ be as above, the group of fixed points of
a simple algebraic group $\bG$ under a Frobenius map. For any $d\in\NN$, there
is a notion of $d$-split Levi subgroup $\bL$ of $\bG$ (an $F$-stable Levi
subgroup of $\bG$), and of $d$-cuspidal unipotent character of $L:=\bL^F$, see
for example \cite{BM98}. A pair $(L,\lambda)$ consisting of a $d$-split Levi
subgroup $L\le G$ with a $d$-cuspidal unipotent character
$\lambda\in\cE(L)$ of $L$ is called a $d$-cuspidal pair. Its {\em relative
Weyl group} is then defined as
$$W_G(L,\lambda):=\No_G(\bL,\lambda)/L.$$
By Brou\'e--Malle--Michel \cite[Thm.~3.2]{BMM}, the set of unipotent
characters of $G$ admits a natural partition
\begin{equation} \label{eq:HCpart}
  \cE(G)=\coprod_{(L,\lambda)/\sim}\cE(G,(L,\lambda)),
\end{equation}
into $d$-Harish-Chandra series $\cE(G,(L,\lambda))$, where $(L,\lambda)$ runs
over the $d$-cuspidal pairs in $G$ modulo conjugation. Furthermore, for each
$d$-cuspidal pair $(L,\lambda)$, there is a bijection
\begin{equation} \label{eq:HCmap}
  \rho(L,\lambda):\cE(G,(L,\lambda))\oneone  \Irr(W_G(L,\lambda))
\end{equation}
between its $d$-Harish-Chandra series and the irreducible characters of its
relative Weyl group $W_G(L,\lambda)$. The degree polynomials are then given
by the following $d$-analogue of Howlett-Lehrer-Lusztig theory:

\begin{thm}  \label{thm:HLL}
 Let $(L,\lambda)$ be a $d$-cuspidal pair of $G$. Then for any
 $\phi\in\Irr(W_G(L,\lambda))$ there exists a rational function
 $D_\phi(x)\in\QQ(x)$ with zeros and poles only at roots of unity or
 zero, but not at primitive $d$th roots of unity, satisfying
 $$\Deg(\gamma)=\pm|G:L|_{x'} D_{\rho(L,\lambda)(\gamma)}\,\Deg(\lambda)\qquad
   \text{ for all }\gamma\in\cE(G,(L,\lambda)),$$
 and
 $$|G:L|_{x'}D_\phi\equiv|W_G(L):W_G(L,\lambda)|\phi(1)\pmod{\Phi_d(x)}\quad
   \text{ for all }\phi\in\Irr(W_G(L,\lambda)).$$
\end{thm}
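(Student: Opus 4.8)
The plan is to express both displayed formulas through the cyclotomic Hecke algebra that Brou\'e--Malle--Michel attach to the $d$-Harish-Chandra series, so that $D_\phi$ becomes, up to sign, the reciprocal of a Schur element. First I would recall from \cite{BMM} that together with the bijection $\rho(L,\lambda)$ of \eqref{eq:HCmap} there is a cyclotomic Hecke algebra $\cH$ for the complex reflection group $W_G(L,\lambda)$, whose parameters are monomials in $x$ twisted by roots of unity and are normalized so that the specialization $x\mapsto\zeta_d:=\exp(2\pi i/d)$ carries $\cH$ onto the group algebra $\QQ[W_G(L,\lambda)]$. The $d$-analogue of the generic degree formula then reads $\Deg(\gamma)=\pm|G:L|_{x'}\,s_\phi^{-1}\,\Deg(\lambda)$ for $\gamma=\rho(L,\lambda)^{-1}(\phi)$, where $s_\phi$ is the Schur element of $\phi$ with respect to the canonical symmetrizing form on $\cH$. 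I would take this as the definition $D_\phi:=\pm s_\phi^{-1}$, which is exactly the first displayed identity.

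Next I would read off the zeros and poles of $D_\phi$. The Schur elements of these algebras are explicit products of factors of cyclotomic type in the parameters: for the imprimitive groups $G(e,1,r)$ occurring in the classical types this is precisely the rational function $f_S$ computed in \cite{GIM} (see \cite{MaU}), and the finitely many exceptional relative Weyl groups are covered by the corresponding tables of Schur elements. After substituting the $d$-Harish-Chandra parameters, each factor becomes a product of terms of the form $x^{a}\eta-1$ with $\eta$ a root of unity, so $D_\phi$ is a rational function whose zeros and poles lie only at roots of unity or at~$0$. That none of them is a primitive $d$th root of unity needs no separate computation: since $x\mapsto\zeta_d$ turns $\cH$ into $\QQ[W_G(L,\lambda)]$, the Schur element specializes to that of the group algebra, $s_\phi(\zeta_d)=|W_G(L,\lambda)|/\phi(1)$, which is finite and nonzero; hence $D_\phi=\pm s_\phi^{-1}$ is regular and nonvanishing at $\zeta_d$, which is the assertion that $\Phi_d$ divides neither numerator nor denominator of $D_\phi$.

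For the congruence I would simply evaluate at $\zeta_d$, the statement ``$\equiv\pmod{\Phi_d(x)}$'' amounting to equality of values at $\zeta_d$ for functions regular there. From the previous paragraph $D_\phi(\zeta_d)=\pm\phi(1)/|W_G(L,\lambda)|$. For the other factor I would use that a $d$-split Levi $\bL=\cent{\bG}{\mathbf{S}}$, for a $\Phi_d$-torus $\mathbf{S}$, contains a Sylow $\Phi_d$-torus of $\bG$ (any Sylow $\Phi_d$-torus containing $\mathbf{S}$ centralizes $\mathbf{S}$, hence lies in $\bL$), so that $\Phi_d$ does not divide $|G:L|_{x'}$ and the value $|G:L|_{x'}(\zeta_d)$ is finite and nonzero. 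By the generic Sylow theory for $d$-split Levi subgroups (\cite{BM98}) this value equals $\pm|W_G(L)|$, where $W_G(L)=\No_G(\bL)/L$. Multiplying the two, $(|G:L|_{x'}D_\phi)(\zeta_d)=\pm|W_G(L):W_G(L,\lambda)|\,\phi(1)$, and the sign is forced to be $+$ by comparing leading behaviour for large real $x$, since all the quantities in sight are honest character degrees. This is the second displayed congruence.

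The hard part is the input behind the first step, namely the $d$-analogue of the Howlett--Lehrer--Lusztig comparison formula and the clean degeneration of $\cH$ to $\QQ[W_G(L,\lambda)]$ at $x=\zeta_d$. For Coxeter relative Weyl groups this is classical Howlett--Lehrer--Lusztig theory, but for the genuinely complex reflection groups $W_G(L,\lambda)$ that arise when $d>1$ it rests on the deeper structural results of \cite{BMM} together with the explicit determination of the Schur elements of the spetsial cyclotomic Hecke algebras, the functions $f_S$ of \cite{GIM} handling the imprimitive case. Once these are granted, both formulas are, as sketched, formal consequences of the generic degree formula and of specialization at a primitive $d$th root of unity; the only bookkeeping left is the determination of the global sign.
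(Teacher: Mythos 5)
Your overall strategy --- interpret $D_\phi$ as the inverse Schur element of a cyclotomic Hecke algebra for $W_G(L,\lambda)$ and obtain both displayed formulas by specializing at $x\mapsto\zeta_d$ --- is precisely the route the paper itself points to: the paper gives no proof of this theorem, citing \cite[Thm.~4.2]{MaH}, and then remarks that the $D_\phi$ are inverses of Schur elements of the cyclotomic Hecke algebra. Your handling of the first display, of the location of zeros and poles, and of the value $D_\phi(\zeta_d)=\pm\phi(1)/|W_G(L,\lambda)|$ is sound, granted the deep case-by-case inputs you defer to \cite{BMM}, \cite{GIM}, \cite{MaU} (which is no worse than what the paper does).

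There is, however, a genuine gap in your derivation of the congruence: the claim that $|G:L|_{x'}(\zeta_d)=\pm|W_G(L)|$ for an arbitrary $d$-split Levi subgroup is false. Generic Sylow theory gives this evaluation only when $\bL$ is the centralizer of a \emph{Sylow} $\Phi_d$-torus, whereas $d$-cuspidal pairs live on arbitrary $d$-split Levi subgroups. Concretely, take $\bG=\GL_5$, $d=2$, $\bL$ of type $\GL_3\times\GL_1(x^2)$ and $\lambda=\gamma_{(2,1)}\otimes 1$; this is a $2$-cuspidal pair since $(2,1)$ is a $2$-core. Here $|G:L|_{x'}=(x^2+1)(x^5-1)$, whose value at $x=-1$ is $-4$, while $W_G(L)=\No_G(\bL)/L$ has order $2$ (its nontrivial element induces the Galois twist on $\GL_1(q^2)$), so $|G:L|_{x'}(\zeta_d)\ne\pm|W_G(L)|$. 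Your own specialization machinery works fine on the other factor: the series consists of the characters labelled $(4,1)$ and $(2,1,1,1)$, with $D_\phi=\pm 1/(x^5-1)$ and $\pm x^5/(x^5-1)$, giving $D_\phi(-1)=\pm 1/2=\pm\phi(1)/|W_G(L,\lambda)|$ as you predict; but then $\bigl(|G:L|_{x'}D_\phi\bigr)(-1)=\pm 2$, which is not what your argument asserts it must be. So the constant relating $|G:L|_{x'}(\zeta_d)$ to $|W_G(L)|$ and $|W_G(L,\lambda)|$ for non-Sylow $d$-split Levi subgroups is a genuinely finer invariant of the pair, and it cannot be extracted from Sylow theory plus Schur-element specialization alone; this is exactly the part of the statement that in the literature rests on the explicit determination of the functions $D_\phi$. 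A secondary flaw: fixing the global sign ``by comparing leading behaviour for large real $x$'' is not valid, since positivity of a rational function at large real $x$ says nothing about its sign or value at $\zeta_d$ (compare $x-2$, positive for large $x$ but negative at $x=1$).
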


See \cite[Thm.~4.2]{MaH} and the references given there. In fact, the
$D_\phi(x)$ are inverses of Schur elements of a cyclotomic Hecke algebra
attached to $W_G(L,\lambda)$ with respect to its canonical symmetrizing form.
For example, if $W_G(L,\lambda)\cong G(e,1,r)$, then $D_\phi(x)$ is a suitable
specialization of $f_\phi(\bu)$ as defined above.

We first determine for which parameters $(u_1,\ldots,u_e)$ all Schur elements
of the cyclotomic Hecke algebra for the cyclic group $G(e,1,1)$ are equal:

\begin{lem}   \label{lem:cyclic}
 Let $K$ be a field of characteristic~0 and $u_i\in K^\times$, $1\le i\le e$,
 pairwise distinct.
 If $\prod_{\stackrel{k=1}{k\ne i}}^e u_k/(u_k-u_i)$ is independent of $i$,
 then there exists $y\in K^\times$ with
 $$\{u_i\mid 1\le i\le e\}=\{\zeta^iy\mid 1\le i\le e\},$$
 where $\zeta\in K$ is a primitive $d$th root of unity.
\end{lem}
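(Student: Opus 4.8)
The plan is to repackage the symmetric hypothesis as a single polynomial condition and then read off the shape of that polynomial. Set $P(x):=\prod_{k=1}^e(x-u_k)\in K[x]$, a monic polynomial of degree $e$ whose roots are exactly the pairwise distinct, nonzero values $u_i$. I would first rewrite the given product in terms of $P$: since $P'(u_i)=\prod_{k\ne i}(u_i-u_k)$ and $P(0)=(-1)^e\prod_k u_k$, a short computation gives
$$\prod_{\stackrel{k=1}{k\ne i}}^e\frac{u_k}{u_k-u_i}=\frac{-P(0)}{u_i\,P'(u_i)}.$$
Because the $u_k$ lie in $K^\times$ and are pairwise distinct, every factor on the left is nonzero, so the common value $c$ is a nonzero constant. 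Hence the hypothesis is equivalent to the assertion that $u_i\,P'(u_i)=b$ for all $i$, where $b:=-P(0)/c\in K^\times$.

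Next I would convert this into an identity of polynomials. Consider $Q(x):=x\,P'(x)-b$. Since $P$ is monic of degree $e$, the polynomial $Q$ has degree $e$ with leading coefficient $e$, and by the previous step it vanishes at the $e$ distinct points $u_1,\ldots,u_e$. Therefore $Q(x)=e\prod_{k=1}^e(x-u_k)=e\,P(x)$, that is,
$$x\,P'(x)-b=e\,P(x).$$

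Now I would solve this relation by comparing coefficients. Writing $P(x)=x^e+\sum_{j=0}^{e-1}c_jx^j$, the identity forces $(j-e)c_j=0$ for $1\le j\le e-1$, so $c_1=\cdots=c_{e-1}=0$, while matching constant terms gives $c_0=-b/e\ne0$. Thus $P(x)=x^e-b/e$, and the $u_i$ are precisely the $e$ roots of $x^e=b/e$. Fixing one root $y\in K^\times$, each other root is $y$ times an $e$-th root of unity; since $P$ splits in $K$ with distinct roots, the ratios $u_i/y$ provide $e$ distinct $e$-th roots of unity in $K$, so $K$ contains a primitive $e$-th root of unity $\zeta$ and $\{u_i\}=\{\zeta^i y\mid 1\le i\le e\}$, giving the claim with $d=e$.

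The only genuinely non-mechanical step is the first one: recognizing that the symmetric rational expression can be collapsed to the condition $u_i\,P'(u_i)=\text{const}$, which then linearizes into the polynomial identity $x\,P'(x)-b=e\,P(x)$. After that, everything is coefficient bookkeeping together with the elementary fact that a separable split binomial $x^e-a$ forces the full group of $e$-th roots of unity into $K$. I do not anticipate any obstacle beyond checking that $c$, and hence $b$, is nonzero, which is immediate from $u_i\in K^\times$ and the distinctness of the $u_i$.
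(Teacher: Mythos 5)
Your proof is correct and follows essentially the same route as the paper's: both reduce the hypothesis to $u_iP'(u_i)$ being a constant $b$, observe that $xP'(x)-b$ is a degree-$e$ polynomial vanishing at the $e$ distinct points $u_i$ and hence equals $eP(x)$, and then read off $P(x)=x^e-b/e$ by comparing coefficients. Your write-up merely supplies a few details the paper leaves implicit (nonvanishing of $b$, the concluding roots-of-unity argument, and the identification $d=e$ in the statement).
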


\begin{proof}
Equivalently we may assume that $u_i\prod_{k\ne i}(u_i-u_k)$
is independent of $i$. Thus, with $f:=\prod_{k=1}^e (x-u_k)\in
K(u_1,\ldots,u_e)[x]$ and $'$ denoting the derivative with respect to $x$,
$$u_if'(u_i)=u_i\prod_{\stackrel{k=1}{k\ne i}}^e(u_i-u_k)=:c$$
is independent of $i$. That is, $u_1,\ldots,u_e$ are zeros
of the polynomial $g:=xf'-c$ of degree~$e$, so
$g=b\prod_{k=1}^e(x-u_k)=b f$ for some $b\in K$. Writing
$f=\sum_{j=0}^e a_j x^j$ we have $j a_j=b a_j$ for $j=1,\ldots,e$.
Since $a_e=1$ we conclude $b=e$, and thus $a_j=0$ for $j=1,\ldots,e-1$.
The claim follows.
\end{proof}

The following result will allow to show the existence of different height zero
degrees in blocks of classical groups, that is, groups of type $A_n$, $B_n$,
$C_n$, $D_n$, $\tw2A_n$ or $\tw2D_n$.
 
\begin{prop}   \label{prop:degclass}
 Let $G=G(q)$ be quasi-simple of classical type. Let $(L,\lambda)$ be a
 $d$-cuspidal pair in $G$. Assume that $W_G(L,\lambda)\ne1$. Then there exist
 unipotent characters $\gamma_1,\gamma_2\in\cE(G,(L,\lambda))$ with the
 following properties:
 \begin{itemize}
  \item[\rm(a)] $\gamma_1(1)\ne\gamma_2(1)$, and
  \item[\rm(b)] $\rho(L,\lambda)(\gamma_i)\in\Irr(W_G(L,\lambda))$ are linear
   characters.
 \end{itemize}
 More precisely, $\gamma_1(1)_q<\gamma_2(1)_q$, or $q=2$, $p=3$ and $G$ is of
 type $D_n$ or $\tw2D_n$.
\end{prop}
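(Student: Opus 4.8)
The plan is to construct, for each $d$-cuspidal pair $(L,\lambda)$ with nontrivial relative Weyl group $W:=W_G(L,\lambda)$, two explicit unipotent characters $\gamma_1,\gamma_2\in\cE(G,(L,\lambda))$ whose Harish-Chandra labels $\rho(L,\lambda)(\gamma_i)$ are \emph{linear} characters of $W$ but whose degrees differ. By Theorem~\ref{thm:HLL}, the degree polynomial of $\gamma$ is $\pm|G:L|_{x'}\,D_{\rho(L,\lambda)(\gamma)}\,\Deg(\lambda)$, so since $\Deg(\lambda)$ and the common factor $|G:L|_{x'}$ cancel in any comparison, the question of whether $\gamma_1(1)\ne\gamma_2(1)$ reduces entirely to comparing the values $D_{\phi_1}(q)$ and $D_{\phi_2}(q)$ for two linear characters $\phi_1,\phi_2$ of $W$. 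The crucial structural input is that for classical groups the relative Weyl group $W_G(L,\lambda)$ is (up to small-rank exceptions) of the form $G(e,1,r)$ for a suitable $e$ determined by $d$ and the type, so that the functions $D_{\phi_i}$ are specializations $f_{\phi_i}(\bu)$ of the explicit rational function~$f_S$ recorded in Subsection~\ref{subsec:symbs}, with $v\mapsto q$ (or an appropriate power) and the $u_j$ specialized to roots of unity times powers of $q$ according to the $d$-Harish-Chandra combinatorics.

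\textbf{Key steps.} First I would fix $W_G(L,\lambda)\cong G(e,1,r)$ and take $\phi_1,\phi_2$ to be two of the $e$ linear characters $\phi_i$ parametrized by the $e$-symbols with $S_i=(r)$, $S_j=(0)$ for $j\ne i$; their Schur-element data is given in closed form by~(\ref{eq:linchar}). Evaluating~(\ref{eq:linchar}) at the relevant specialization gives a product formula for each $D_{\phi_i}(q)$ in which the variables $u_j$ become concrete $q$-powers. Second, I would compare $D_{\phi_1}(q)$ with $D_{\phi_2}(q)$ by tracking the $q$-part: the precise refinement claimed (that $\gamma_1(1)_q<\gamma_2(1)_q$) should follow because the $a$-invariants of distinct linear characters differ, so the leading powers of $q$ in their degree polynomials are genuinely different. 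Choosing $\phi_1,\phi_2$ to be the linear characters realizing the extreme (smallest and largest) $a$-values among the linear characters of $G(e,1,r)$ would guarantee an honest inequality of $q$-parts whenever $r\ge1$ and $e\ge1$ are such that at least two linear characters have distinct $a$-invariant. Finally I would dispose of the genuinely exceptional configurations: the small relative Weyl groups that are not of the form $G(e,1,r)$ (e.g. those arising in types $D_n,\tw2D_n$ where one gets $G(e,2,r)$-type groups or dihedral/Coxeter groups), and the degenerate cases where the naive $q$-part comparison gives equality — this is exactly where the stated second alternative ``$q=2$, $p=3$, $G$ of type $D_n$ or $\tw2D_n$'' enters.

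\textbf{Main obstacle.} The hard part will be the case $W_G(L,\lambda)$ of type $D$, i.e.\ the relative Weyl groups $G(e,2,r)$ occurring for $G$ of type $D_n$ and $\tw2D_n$. There the linear characters of $G(e,2,r)$ are restrictions/extensions of those of $G(e,1,r)$, the symbol combinatorics has the extra degeneracy coming from the index-two subgroup, and the clean formula~(\ref{eq:linchar}) no longer applies verbatim; one must instead argue with the degenerate symbols and control when two linear characters can accidentally share the same degree polynomial value at a specific small~$q$. I expect that for generic $q$ the $q$-part inequality $\gamma_1(1)_q<\gamma_2(1)_q$ persists by the $a$-invariant argument, and that a short direct computation of the remaining small specializations isolates precisely the single surviving exception $q=2$ together with $p=3$ in types $D_n$ and $\tw2D_n$. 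The role of $p$ here is that one only needs two height-zero characters in the \emph{same $p$-block}, so one must also check that $\gamma_1,\gamma_2$ can be chosen to lie in a common block; but since both lie in the same $d$-Harish-Chandra series $\cE(G,(L,\lambda))$ and the series is (by Cabanes--Enguehard type results invoked later) contained in a single unipotent block for the relevant $d=d_p(q)$, this is automatic and need not be belabored at this stage.
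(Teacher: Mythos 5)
Your plan follows the same route as the paper: reduce via Theorem~\ref{thm:HLL} to comparing the values $D_\phi(q)$ for linear characters $\phi$ of $W_G(L,\lambda)$, evaluate these through~(\ref{eq:linchar}) at the explicit parameter specializations, and separate the two degrees by their $q$-parts. But there are genuine gaps exactly at the steps you defer. Your central claim that \emph{distinct linear characters have distinct $a$-invariants} is false in general: in the parameter lists for types $B_n,C_n,D_n,\tw2D_n$ (cases (II), (II') of the paper) the specialized parameters include entries $-q^{b_ie+j}$ in which $b_i=0$ and $j=0$ may occur, so two parameters (for instance $1$ and $-1$) can have the same absolute value, and then the corresponding linear characters have equal $a$-invariants and degrees with equal $q$-parts. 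You condition on ``at least two linear characters have distinct $a$-invariant'' but never verify it; the verification is the crux, and it uses that $\lambda$ is not parametrized by a degenerate symbol, whence $b_1>0$ by \cite[(2B)]{BM93}, so that some parameter has $|u_i|>1$. One then compares the specific pair $u_1=1$ versus a parameter of maximal absolute value by estimating the $q$-adic valuation of the \emph{values} $f_i(\bq)$, not of the polynomials: this distinction matters because for even $q$ a factor $v^ku_1-u_j$ with $v^ku_1=-u_j$ picks up an extra factor~$2$, and tracking this is precisely what shows that equality of $q$-parts forces $e=1$, $\bq=(q;1,-q)$ and $q=2$.

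You also misplace the exceptional case: you locate it in the degenerate-symbol situation $W_G(L,\lambda)\cong G(2m,2,r)$, but in the paper that case produces no new exceptions (for $d\ne1$ the parameters there are genuine positive powers of $q$ and the same valuation argument applies; for $d=1$ one takes the trivial and the Steinberg character). The exception $q=2$, $p=3$ arises instead in the non-degenerate case $W_G(L,\lambda)\cong G(2,1,r)$ with $\bq=(q;1,-q)$, where the $q$-parts coincide but the degrees still differ because $\prod_{k=0}^{r-1}(q^k+q)\ne\prod_{k=0}^{r-1}(q^{k+2}+q)$ — note that assertion (a) must still be proved in the exceptional case, which your sketch leaves open. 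Finally, your construction produces two characters only when $e\ge2$: for $G=\SL_n(q)$ with $d=1$ and $G=\SU_n(q)$ with $d=2$ the relative Weyl group is $\fS_n=G(1,1,n)$, which has only one linear character of the form covered by~(\ref{eq:linchar}), and the paper treats this case separately via the trivial and the Steinberg character. In short, the strategy is the right one, but the statements you flag as ``should follow'' constitute the actual content of the proof.
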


\begin{proof}
We will show that $W_G(L,\lambda)$ has linear characters $\phi_1,\phi_2$ such
that $D_{\phi_1}(q)\ne D_{\phi_2}(q)$. The claim then follows from
Theorem~\ref{thm:HLL}. In groups of classical type, there are three essentially
different possibilities for the
structure of the relative Weyl group (see \cite[(3B)]{BM93}). Firstly,
$W_G(L,\lambda)$ could be a symmetric group $\fS_n$.
This happens if and only if either $G=\SL_n(q)$ and $d=1$, or $G=\SU_n(q)$
and $d=2$. In both cases, all of $\cE(G)$ is just one $d$-Harish-Chandra series
and we may take the trivial and the Steinberg character, which correspond to
the two linear characters of $\fS_n$ and have distinct degrees. \par
The second possibility is that $W_G(L,\lambda)\cong G(m,1,r)$ for some $m\ge2$.
This occurs for all
other $d$-Harish-Chandra series $\cE(G,(L,\lambda))$ in classical groups for
which $\lambda$ is not parametrized by a so-called degenerate symbol. Let
$\phi_i$ denote the linear character of $G(m,1,r)$ parametrized by the
multipartition $(\pi_1,\ldots,\pi_m)$ with $\pi_i=(r)$. According
to~(\ref{eq:linchar}) we have $D_{\phi_i}=c\ f_i^{-1}(\bq)$, where
$$f_i(v,u_1,\ldots,u_m):=u_i^r\prod_{\stackrel{j=1}{j\ne i}}^{m}
  \prod_{k=0}^{r-1}(v^ku_i-u_j)$$
for some non-zero $c$ not depending on $i$, and the parameters $\bq$ are
certain powers of $q$, up to sign, as follows
(see \cite[Bem.~2.10, 2.14, 2.19]{BM93}):
\begin{itemize}
 \item[(I)] for $G=\SL_n(q)$, $d\ne1$, we have $m=d$ and 
  $$\bq=(q^d; 1,q^{b_1d+1},q^{b_2d+2},\ldots,q^{b_{d-1}d+d-1}),$$
 \item[(I')] for $G=\SU_n(q)$, $d\ne2$, we have $m=d^*$, and $\bq$ is obtained
  from the parameters in case~(I) for $d^*$ by replacing $q$ by $-q$,
 \item[(II)] for $G$ of type $B_n,C_n,D_n,\tw2D_n$ and $d$ odd we have
  $m=2d$, $e=d$ and
  $$\bq= (q^e; 1,q^{b_1e+1},\ldots,q^{b_{e-1}e+e-1},-q^{b_ee},\ldots,
   -q^{b_{2e-1}e+e-1}),$$
 \item[(II')] for $G$ of type $B_n,C_n,D_n,\tw2D_n$ and $d\equiv2\pmod4$ we
  have $m=d$, and $\bq$ is obtained from the parameters in case~(II)
  for $d^*$ by replacing $q$ by $-q$,
 \item[(III)] for $G$ of type $B_n,C_n,D_n,\tw2D_n$ and $d\equiv0\pmod4$ we
  have $m=d$, $e=d/2$ and
  $$\bq=(-q^e; 1,q^{b_1e+1},\ldots,q^{b_{e-1}e+e-1},-(-1)^{b_e}q^{b_ee},\ldots,
   -(-1)^{b_{2e-1}}q^{b_{2e-1}e+e-1}),$$
\end{itemize}
where the $b_i$ are non-negative integers which are determined by $\lambda$.
Here, for $d\in\NN$, $d^*$ is defined by
$$d^*:=\begin{cases} 2d& \text{if $d$ is odd,}\cr
   d/2& \text{if }d\equiv2\pmod4,\cr d& \text{if }d\equiv0\pmod4.\end{cases}$$
Note that it can never happen that
$v^ku_i-u_j=0$ for the above choices of parameters. Furthermore, we claim that
there is at least one $i$ with $|u_i|>1$. Indeed, otherwise we are necessarily
in cases~(II) or (II') and $e=1$. But then, since $\lambda$ is not
parametrized by a degenerate symbol, $b_1>0$ by the definition of the $b_i$ in
\cite[(2B)]{BM93}, so $|u_2|>1$, a contradiction. \par
By our above reductions it suffices now to show that not all $f_i(\bq)$ are
equal. For this, we estimate the $q$-power in $f_i(\bq)$ for two choices
of~$i$. For $i=1$
we have $u_1=1$, and $v^ku_1-u_j$ is at most divisible by the $q$-power $u_j$
(at least when $q$ is odd), and not divisible by $q$ if $k=0$, so $f_1(\bq)$
is at most divisible by the $q$-power $\prod_{j=2}^m u_j^{r-1}$. Now let $i$
be such that $|u_i|$ is maximal among the $\{u_1,\ldots,u_m\}$. Then
$v^ku_i-u_j$ is divisible by at least the $q$-power $u_j$, so $f_i(\bq)$ is
at least divisible by
$$u_i^r\prod_{\stackrel{j=1}{j\ne i}}^m u_j^r=\prod_{j=1}^m u_j^r.$$
By our above observation we have $|u_i|\ge q$, so $f_1(\bq)\ne f_i(\bq)$ as
claimed.
\par
If $q$ is even, then $v^ku_1-u_j$ is divisible by $2u_j$ if $v^ku_1=-u_j$. For
fixed $j$, this can only happen for at most one value of $k$, and only when
$j\equiv 1+e\pmod{2e}$ and we are in cases (II), (II') or~(III). Thus, we get
an additional factor at most~2 in the $q$-part of $f_1(\bq)$. It follows that
the $q$-parts of $f_1(\bq)$ and $f_i(\bq)$ can only agree if $|u_i|=q=2$ and all
other $u_j$ have absolute value~1. Thus $e=1$, we are in cases~(II) or~(II')
and $\bq=(q;1,-q)$. But then
$$f_1(\bq)=\prod_{k=0}^{r-1}(q^k+q)\ne\prod_{k=0}^{r-1}(q^{k+2}+q)$$
(or the same with $q$ replaced by $-q$).
\par
Finally, we consider the case where $\lambda$ is parametrized by a degenerate
symbol, which can only happen in types $D_n$ and $\tw2D_n$. Then 
$W_G(L,\lambda)\cong G(2m,2,r)$, for some $m\ge1$. We denote by
$\psi_1,\ldots,\psi_m$ the $m$ distinct linear characters contained in the
restrictions of the linear characters $\phi_1,\ldots,\phi_{2m}$ from
$G(2m,1,r)$ to its normal subgroup $G(2m,2,r)$ of index~2. Evaluation of
\cite[(5.12)]{MaU} shows that $D_{\psi_i}=\tilde c\ g_i^{-1}(\bq)$ for some
constant $\tilde c$, where
$$g_i(v,u_1,\ldots,u_m):=u_i^r\prod_{\stackrel{j=1}{j\ne i}}^{m}
  \prod_{k=0}^{r-1}(v^ku_i-u_j)\qquad(1\le i\le m),$$
and the $\bq$ are certain powers of $q$, up to sign, as follows
(see \cite[Bem.~2.16, 2.19]{BM93}):
\begin{itemize}
 \item[(IV)] for $G$ of type $D_n,\tw2D_n$ and $d$ odd we have $m=e=d$ and
  $$\bq=(q^e; 1,q^{2b_1e+2},\ldots,q^{2b_{e-1}e+2e-2}),$$
 \item[(IV')] for $G$ of type $D_n,\tw2D_n$ and $d\equiv2\pmod4$ we have
  $m=d^*$, and $\bq$ is obtained from the parameters in case~(IV) for $d^*$ by
  replacing $q$ by $-q$,
 \item[(V)] for $G$ of type $D_n,\tw2D_n$, and $d\equiv0\pmod4$ we have
  $m=e=d/2$ and
  $$\bq=(-q^e; 1,q^{b_1d+2},\ldots,q^{b_{e-1}d+d-2}).$$
\end{itemize}
Clearly, unless $d=1$, we can argue as before to conclude that
$g_1(\bq)\ne g_i(\bq)$ for a suitable index~$i$. If $d=1$ then
$W_G(L,\lambda)\cong G(2,2,r)$ is the Weyl group of type $D_r$, and we are in
the principal 1-series of $G$. Here instead we take the trivial and the
Steinberg character, which have distinct degree.
\end{proof}

\subsection{Unipotent blocks} \label{subsec:unipblock}
After these combinatorial preparations we are ready to investigate unipotent
blocks of groups of Lie type $G=G(q)$; here a $p$-block of $G$ is called
unipotent if it contains at least one unipotent character of $G$.

\begin{thm}  \label{thm:unipblock}
 Let $\bG$ be a simple algebraic group of simply-connected type,
 $F:\bG\rightarrow\bG$ a Frobenius map with group of fixed points $G=\bG^F$.
 Let $B$
 be a unipotent $p$-block of $G$, where $p$ is not the defining characteristic
 $r$ of $\bG$. Then either $B$ is of defect~0, or $B$ contains two height~0
 characters of different degrees. Moreover, these two degrees have different
 $r$-parts, unless possibly if $r=2$.
\end{thm}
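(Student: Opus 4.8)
The plan is to translate the global, block-theoretic statement into the combinatorics of unipotent character degrees already prepared in Propositions~\ref{prop:degclass} and~\ref{prop:degexc}. Set $e$ to be the multiplicative order of $q$ modulo~$p$ (with the usual modification for the twisted types). The first step is to identify the unipotent characters lying in $B$: by the classification of unipotent blocks of Cabanes--Enguehard (resting on \cite{BMM}), for $p$ not the defining characteristic they form exactly one $e$-Harish-Chandra series $\cE(G,(L,\lambda))$ attached to an $e$-cuspidal pair $(L,\lambda)$, and the bijection $\rho(L,\lambda)$ of~(\ref{eq:HCmap}) matches them with $\Irr(W_G(L,\lambda))$. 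The second step is a height criterion: evaluating the congruence of Theorem~\ref{thm:HLL} at $q$ gives $\gamma(1)/\lambda(1)\equiv\pm|W_G(L):W_G(L,\lambda)|\,\phi(1)\pmod{\Phi_e(q)}$ for $\gamma$ with $\rho(L,\lambda)(\gamma)=\phi$; hence, whenever $p\nmid|W_G(L):W_G(L,\lambda)|\,\phi(1)$, the character $\gamma$ has the minimal possible $p$-part $\lambda(1)_p$ and is therefore of height zero in $B$. In particular every linear $\phi$ produces a height zero character, provided $p$ does not divide the index $|W_G(L):W_G(L,\lambda)|$.

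Next I would dispose of the defect-zero alternative. If $\bL=\bG$ then $W_G(L,\lambda)=1$, the series is the single $e$-cuspidal unipotent character $\gamma=\lambda$, whose degree carries the full $\Phi_e$-part of $|G|$; since $\bG$ is simple, $Z^\circ(\bG)=1$ and one checks $\gamma(1)_p=|G|_p$, so $B$ has defect~$0$. Conversely, if $\bL\ne\bG$ is a proper $e$-split Levi, then its central $\Phi_e$-torus $Z^\circ(\bL)_{\Phi_e}$ has positive rank, and $W_G(L,\lambda)$ acts on it as a nontrivial reflection group, so $W_G(L,\lambda)\ne1$. Thus the hypothesis that $B$ is \emph{not} of defect~$0$ reduces exactly to $W_G(L,\lambda)\ne1$, which is the standing assumption of Propositions~\ref{prop:degclass} and~\ref{prop:degexc}.

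Now I would apply those two propositions. For $G$ of classical type, Proposition~\ref{prop:degclass} directly furnishes $\gamma_1,\gamma_2\in\cE(G,(L,\lambda))$ with $\rho(L,\lambda)(\gamma_i)$ \emph{linear} — hence of height zero by the height criterion above — and with $\gamma_1(1)\ne\gamma_2(1)$; its sharper clause $\gamma_1(1)_q<\gamma_2(1)_q$ gives distinct $r$-parts because $q$ is a power of $r$, the sole exception $q=2$ being $r=2$. For $G$ of exceptional type I would take $\phi_1=\mathbf 1$ and $\phi_2=\det$, the determinant of the reflection representation, which is a nontrivial linear character since $W_G(L,\lambda)\ne1$; these have different $b$-invariants, so the associated degree polynomials $f_1\ne f_2$ have distinct $x$-valuations $a_1\ne a_2$, whence distinct $r$-parts, and in particular distinct degrees. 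Proposition~\ref{prop:degexc} together with a direct inspection of the degree-polynomial tables then handles the finitely many small groups $G_2(2),\tw2B_2(2),\tw2F_4(2),\tw2G_2(3)$ excluded there, the Suzuki and Ree series, and the residual coincidences of $r$-parts, which can occur only for $r\in\{2,3,5\}$ (where $r\mid n_j$).

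The main obstacle is the control of the small, bad primes $p$. The clean Cabanes--Enguehard description of the unipotent block and, above all, the height criterion require $p\nmid|W_G(L):W_G(L,\lambda)|$; for $p\in\{2,3\}$ (and $p=5$ in type $E_8$) this can fail, and one must instead verify directly that the two characters produced remain of height zero, using the finer block-theoretic results available for bad primes. A second, genuinely unavoidable, point is the case $q=2$, $p=3$ in types $D_n,\tw2D_n$ singled out in Proposition~\ref{prop:degclass}: there the two height zero degrees may share their $r$-part, which is precisely why the final assertion of the theorem must permit the exception $r=2$.
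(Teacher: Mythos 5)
Your good-prime skeleton coincides with the paper's: identify the unipotent characters of $B$ with a single $d$-Harish-Chandra series via Cabanes--Enguehard, detect height-zero characters through the congruence in Theorem~\ref{thm:HLL}, and quote Proposition~\ref{prop:degclass} for the classical types. The first genuine gap is your argument for exceptional types at good primes. You take $\phi_1$ trivial and $\phi_2$ the determinant character of the reflection representation and claim that their distinct $b$-invariants force distinct $x$-valuations of the associated degree polynomials. No such principle holds: distinct linear characters of $W_G(L,\lambda)$ can correspond to unipotent characters with \emph{identical} degree polynomials. For example, in the principal $\Phi_3$-series of $G_2(q)$ the relative Weyl group is cyclic of order~$6$, so all six of its irreducible characters are linear with pairwise distinct $b$-invariants, yet two of them are matched with the complex-conjugate cuspidal characters $G_2[\theta]$ and $G_2[\theta^2]$, whose degree polynomials (hence $a$-invariants, $r$-parts and degrees) are equal. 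Moreover, since $\phi_1(1)=\phi_2(1)=1$, the congruence of Theorem~\ref{thm:HLL} gives the same value (up to sign) for both characters, so you cannot even conclude that the two degree polynomials differ, and Proposition~\ref{prop:degexc} never comes into play. The paper's proof is structured precisely to avoid this trap: for cyclic $W_G(L,\lambda)$ it applies Lemma~\ref{lem:cyclic}, which characterizes when \emph{all} Schur elements coincide, against the known parameter values; for non-cyclic $W_G(L,\lambda)$ it exhibits two characters of $W_G(L,\lambda)$ with \emph{distinct degrees} prime to $p$, which by the congruence forces distinct degree polynomials, and only then invokes Proposition~\ref{prop:degexc}.

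The second gap is that the bad primes are deferred, not proved, and they are a substantial part of the theorem. For $p$ bad your opening step is simply false: the unipotent characters in a block do not form one $d$-Harish-Chandra series. For classical groups with $p=2$, \emph{all} unipotent characters lie in the principal block (the paper quotes \cite[Th.~21.14]{CE} and uses the trivial and the Steinberg character); for exceptional groups at bad primes the paper relies on Fong \cite{F74} and Malle \cite{Ma90} for $\tw2G_2$ and $\tw2F_4$, and otherwise on Enguehard's classification \cite[Th.~A]{En00}, which requires the explicit case-by-case list in Table~\ref{tab:blockexc} of all non-principal unipotent blocks of positive defect, together with, for each entry, either a relative Weyl group having two distinct $p'$-character degrees or a concrete pair $\gamma_1,\gamma_2$ of height-zero characters with distinct $r$-parts. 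None of this appears in your proposal beyond the acknowledgment that it ``must be verified''; as it stands, a substantial portion of the theorem (including part of the $r$-part refinement and the $r=2$ caveat) is left unproved.
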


\begin{proof}
First assume that $p$ is a good prime for $G$, odd, and not equal to~3 when
$G$ is not of type $\tw3D_4$. Then by Cabanes--Enguehard \cite[Thm.~22.9]{CE}
the intersections of unipotent $p$-blocks with $\cE(G)$ are just the
$d$-Harish-Chandra series,
where $d$ is the multiplicative order of $q$ modulo~$p$. Let $B$ be a
unipotent $p$-block corresponding to the $d$-Harish-Chandra series of the
$d$-cuspidal pair $(L,\lambda)$. If $L=G$, so $\lambda$ is a $d$-cuspidal
character of $G$, then the defect group of $B$ is trivial by
\cite[Thm.~22.9(ii)]{CE}, whence $B$ is of defect~0.
\par
If $L<G$, then $W_G(L,\lambda)\ne1$. Now
$$\Deg(\gamma) =\pm|G:L|_{x'}\, D_\phi\, \Deg(\lambda)$$
for $\gamma\in\cE(G,(L,\lambda))$, where $\phi:=\rho(L,\lambda)(\gamma)$, and
$$|G:L|_{q'}D_\phi(q)\equiv|W_G(L):W_G(L,\lambda)|\phi(1)\pmod{\Phi_d(q)}$$
for $\phi\in\Irr(W_G(L,\lambda))$, by Theorem~\ref{thm:HLL}(b). As $p$ divides
$\Phi_d(q)$ by definition, this implies the same congruence $\pmod p$. By the
description in \cite[Thm.~22.9(ii)]{CE}, some unipotent character in $B$ is
of height zero. This shows that the unipotent characters in $B$ of height~0
are precisely those $\gamma\in\cE(G,(L,\lambda))$ with
$\phi=\rho(L,\lambda)(\gamma)$ of degree prime to~$p$, for example the linear
characters of $W_G(L,\lambda)$. For $G$ of classical type it is shown in
Proposition~\ref{prop:degclass} that not all unipotent characters in $B$
parametrized by linear characters of $W_G(L,\lambda)$ have the same degree.
\par
If $G$ is of exceptional type and $W_G(L,\lambda)$ is cyclic, we may invoke
Lemma~\ref{lem:cyclic} together with the parameters in \cite[Tab.~8.1]{BM93}
to conclude. The relative Weyl groups $W_G(L,\lambda)$ for exceptional groups
which are non-cyclic are listed in \cite[Tab.~3.6]{BM93} and \cite[Tab.~1]{BMM}.
It is easy to check that these have two distinct character degrees prime
to~$p$, for all primes $p$ which are good for $G$. But then the corresponding
unipotent degrees must be distinct by Proposition~\ref{prop:degexc}, and of
height~0 by Theorem~\ref{thm:HLL}(b).
\par
Next, if $G$ is of classical type and $p=2$, then all unipotent characters
of $G$ lie in the principal $p$-block of $G$, by \cite[Th.~21.14]{CE}. Here,
the trivial character and the Steinberg character have $p$-height~0 and
different degrees. \par
It remains to consider the case where $G$ is of exceptional type and $p$ is a
bad prime for $G$ (including the case of $\tw3D_4$ with $p=3$). There is no
bad prime for $\tw2B_2$. The $2$-blocks for $\tw2G_2$ and the $3$-blocks of
$\tw2F_4$ have been determined by Fong \cite{F74} resp.\ Malle
\cite[Bem.~1]{Ma90}: unipotent characters lie either in the principal block
or are of defect zero. In the principal block the trivial and the Steinberg
character have different degree. \par

\begin{table}[htbp]
\caption{Non-principal $p$-blocks of positive defect for bad $p$}
\label{tab:blockexc}
 \[\begin{array}{crllll} \hline
 G& (p,d)& L& \lambda& W_G(L,\lambda)& \gamma_1,\gamma_2\\
\hline
 F_4& (3,1)& 2^2.B_2& 1& C_2& B_2,1; B_2,\eps\\
 E_6& (3,1)& 1^2.D_4& \zeta_1& \fS_3& \\
    & (3,2)& 2.A_5& \xi& C_2& \phi_{64,4}; \phi_{64,13}\\
 E_7& (2,1)& 1.E_6& E_6[\theta]& C_2& E_6[\theta],1; E_6[\theta],\eps\\
    & (3,1)& 1^3.D_4& \zeta_1& W(B_3)& \\
 E_8& (2\text{ or }5,1)& 1^2.E_6& E_6[\theta]& W(G_2)& E_6[\theta],\phi_{1,0};
                                   E_6[\theta],\phi_{1,6}\\
    & (3\text{ or }5,1)& 1^4.D_4& \zeta_1& W(F_4)& \\
    & (3\text{ or }5,1)& 1.E_7& E_7[\xi]& C_2& E_7[\xi],1; E_7[\xi],\eps\\
    & (5,4)& 4^2.D_4& \xi_1,\ldots,\xi_4& G_8& \\
\hline
\end{array}\]\end{table}

For the other types of exceptional groups,
we use the description of unipotent blocks for bad primes $p$ obtained by
Enguehard \cite[Th.~A]{En00}. Here, again any unipotent $p$-block is either
of defect~0, or it contains at least one non-trivial $d$-Harish-Chandra series.
According to loc.\ cit.\ and the tables in \cite[pp.~347--358]{En00}, the
non-principal unipotent blocks not of defect zero are as listed in
Table~\ref{tab:blockexc} (up to Ennola duality and algebraic conjugacy; the
notation is as in loc.~cit.) In each case either the relative Weyl group has
two distinct character degrees prime to~$p$, in which case we may conclude as
above, or we list two unipotent characters $\gamma_1,\gamma_2$ in the
corresponding $d$-Harish-Chandra series which are of $p$-height~0 and have
distinct $r$-parts in their degrees (see \cite[Tab.~2]{BMM} for the list of
$d$-Harish-Chandra series and \cite[Ch.~13]{Ca} for the degrees of unipotent
characters).

This completes the proof of Theorem~\ref{thm:unipblock}. Note that the results
hold even when the finite group $G$ is not perfect, or even solvable.
\end{proof}

We now extend the result to unipotent blocks of arbitrary finite connected
reductive groups. 

\begin{thm}  \label{thm:unipblockgeneral}
 Let $\bG$ be a connected reductive group with a Frobenius map
 $F:\bG\rightarrow\bG$ and group of fixed points $G:=\bG^F$. Let $B$ be a
 unipotent $p$-block of $G$, where $p$ is not the defining characteristic $r$
 of $\bG$. Then either $B$ is of central defect, and all characters of $B$
 have the same degree, or $B$ contains two height~0
 characters of different degrees. Moreover, these two degrees have different
 $r$-parts unless possibly if $r=2$.
\end{thm}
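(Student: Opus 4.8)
The plan is to reduce the general connected reductive case to the simple simply-connected case already settled in Theorem~\ref{thm:unipblock}, via the standard behaviour of unipotent characters and unipotent blocks under the three operations relating $\bG$ to a product of simple simply-connected groups: factoring out the central torus, central isogeny, and decomposing the derived group into its $F$-orbits of simple factors.

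First I would observe that the unipotent characters of $G=\bG^F$ are trivial on $(Z(\bG)^\circ)^F$, since they lie in $\cE(G,1)$ and the associated central character is trivial. Hence they are inflated from $\bG^F/(Z(\bG)^\circ)^F$, their degrees are unchanged, and the $p$-part of $(Z(\bG)^\circ)^F$ contributes only a central direct factor to the defect group. This reduces the statement to $\bG$ semisimple, at the cost of replacing ``defect~$0$'' by ``central defect''. Next, a semisimple $\bG$ is the image of a central isogeny from its simply-connected cover $\bG_{sc}=\prod_i\bG_i$, a direct product of simple simply-connected groups; since unipotent characters, their degrees as polynomials in $q$, their heights, and the block partition all correspond under a central isogeny, I may replace $G$ by $\bG_{sc}^F$. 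Finally $F$ permutes the simple factors $\bG_i$; grouping them into $F$-orbits and applying the standard restriction-of-scalars identification, each orbit contributes a factor $\bG_j^{F^{k_j}}$ that is again simple of simply-connected type, and $\bG_{sc}^F\cong\prod_j\bG_j^{F^{k_j}}$. Throughout, the defining characteristic remains the common prime $r$.

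Under this identification a unipotent $p$-block $B$ of $G$ corresponds to an outer tensor product $B_1\otimes\cdots\otimes B_t$ of unipotent $p$-blocks $B_j$ of the simple simply-connected factors, where $p\ne r$. Defect and height are additive over the tensor factors, so $B_1\otimes\cdots\otimes B_t$ has defect~$0$ precisely when every $B_j$ does, and a character $\bigotimes_j\chi_j$ has height~$0$ precisely when every $\chi_j$ has. Applying Theorem~\ref{thm:unipblock} to each factor, either every $B_j$ is of defect~$0$---in which case $B$ is of central defect, and a block with central defect group has all its irreducible characters of equal degree, yielding the first alternative---or some factor, say $B_1$, contains two height~$0$ characters $\gamma_1,\gamma_2$ with $\gamma_1(1)\ne\gamma_2(1)$ and with distinct $r$-parts unless $r=2$.

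In the latter case I would pick a height~$0$ character $\eta_j$ in each remaining block $B_j$ (every block contains one) and compare the two height~$0$ characters $\gamma_1\otimes\eta_2\otimes\cdots\otimes\eta_t$ and $\gamma_2\otimes\eta_2\otimes\cdots\otimes\eta_t$ of $B$: their degrees are $\gamma_1(1)$ and $\gamma_2(1)$ times the common factor $\prod_{j\ge2}\eta_j(1)$, hence differ, and their $r$-parts differ exactly when $\gamma_1(1)_r\ne\gamma_2(1)_r$, that is, unless $r=2$. Transporting these characters back through the degree- and height-preserving correspondences of the first paragraph produces two height~$0$ characters of $B$ with the asserted properties. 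The main obstacle is bookkeeping rather than conceptual: one must verify carefully that factoring the central torus, the central isogeny $\bG_{sc}\to\bG$, and the regrouping of $F$-orbits of simple factors all preserve the unipotent blocks, their defect groups modulo the center, the heights, and the unipotent degrees (so that in particular the $r$-parts are unaffected). Granting these standard compatibilities, the argument applies Theorem~\ref{thm:unipblock} factor by factor.
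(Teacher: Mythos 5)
Your overall skeleton (decompose into simple factors, apply Theorem~\ref{thm:unipblock} componentwise, transport back) is the same as the paper's, but your reduction route has a genuine gap at the central isogeny step. You assert that under the isogeny $\bG_{sc}\to\bG'$ (with $\bG'$ the semisimple quotient) ``unipotent characters, their degrees, their heights, and the block partition all correspond,'' so that $G$ may be replaced by $\bG_{sc}^F$. This is not a formal or standard fact, because on finite groups the isogeny gives a map $\bG_{sc}^F\to{\bG'}^F$ that is in general neither injective nor surjective: for instance the image of $\SL_n(q)$ in $\mathrm{PGL}_n(q)$ is $\mathrm{PSL}_n(q)$, of index $\gcd(n,q-1)$, which may well be divisible by $p$. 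So ${\bG'}^F$ is not a quotient of $\bG_{sc}^F$, and there is no homomorphism along which blocks, defect groups and heights can simply be matched. Indeed corresponding unipotent blocks of isogenous groups need not have isomorphic defect groups (the principal $2$-blocks of $\SL_2(q)$ and $\mathrm{PGL}_2(q)$ have generalized quaternion, respectively dihedral, defect groups), so ``height-preserving correspondence'' is precisely the kind of statement that must be proved, not quoted. A similar unaddressed lifting problem occurs at the end of your argument: knowing that the defect group of the reduced block is central (or trivial) only tells you that the defect group $D$ of $B$ has central image in a quotient, and the preimage of a center need not be central; concluding that $B$ itself has central defect, and that the Height Zero Conjecture holds for it, requires an extra argument.

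The paper's proof is organized exactly so as to avoid the isogeny. It never passes to $\bG_{sc}$: it restricts from $G$ to the subgroup $G'=[\bG,\bG]^F$, using Lusztig's theorem \cite{Lu88} that unipotent characters restrict irreducibly to the fixed points of the derived group (so $B$ covers a unique block $B'$ of $G'$, and degrees and height-zero-ness are transported by actual restriction of characters), and then passes from $G'$ to the direct product $\bar G=G'/Z([\bG,\bG])^F$ using that unipotent characters have the finite center in their kernel, i.e.\ block domination for a central quotient. Both moves are along genuine group homomorphisms (inclusion and surjection), where covering and domination theory control defect groups and heights; the simple factors are then handled by Theorem~\ref{thm:unipblock} up to central factor groups. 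In the equal-degree (central defect) case the paper does not invoke a general principle about central-defect blocks but argues directly: $\Irr(B')=\{\chi'\}$, $\chi'$ extends to $G$, and $G/G'$ is abelian, so Clifford theory forces all characters of $B$ to have degree $\chi'(1)$. If you replace your isogeny step by this restriction-plus-domination argument, your proof becomes essentially the paper's; as written, the ``bookkeeping'' you defer is the mathematical core of the reduction.
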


\begin{proof}
The derived group $[\bG,\bG]$ is semisimple, hence a central product
$\bG_1\circ\ldots\circ\bG_r$ of simple algebraic groups. We assume the $\bG_i$
ordered such that $\bG_1,\ldots,\bG_s$, for some $s\le r$, is a system of
representatives for the
$F$-orbits on $\{\bG_1,\ldots,\bG_r\}$. Then, $G':=[\bG,\bG]^F$ is a central
product $G_1\circ\ldots\circ G_s$ of groups, with $G_i\cong \bG_i^{F^{m_i}}$,
where $m_i$ is the size of the $F$-orbit of $\bG_i$. Note that, in general,
$G'$ will be larger than the commutator subgroup of $G$. Modulo
$Z([\bG,\bG])^F=Z(G')$ we obtain a direct product
$$\bar G:=G'/Z([\bG,\bG])^F\cong \bar G_1\times\ldots\times \bar G_s,$$
where $\bar G_i:=G_i/(G_i\cap Z([\bG,\bG])^F)$.
Now let $B$ be a unipotent $p$-block of $G$. Since unipotent characters restrict
irreducibly to the $F$-fixed points of the derived group \cite{Lu88}
$B$ covers a unique block $B'$ of $G'$. Furthermore unipotent characters have
the center in their kernel, so the same holds for unipotent blocks. Thus
$B'$ corresponds to a unique block $\bar B$ of the direct product $\bar G$.
This is a direct product $\bar B=\bar B_1\times\ldots\times\bar B_s$ of blocks
$\bar B_i$ of $\bar G_i$.\par
Now assume that one of the $\bar B_i$ is not of defect~0 for $\bar G_i$. Since
$\bar G_i$ is a central factor group of a group as in
Theorem~\ref{thm:unipblock}, $\bar B_i$ then contains two height~0 unipotent
characters of different degrees, with different $r$-parts if $r=2$. Thus, the 
same is true for $\bar B$, hence also for $B'$. By the above-mentioned
irreducibility of restrictions, this then also holds for $B$.
\par
On the other hand, if all $\bar B_i$ are of defect~0, then so is $\bar B$, so
$B'$ is of central defect, contained in $Z([\bG,\bG])^F$. But
$Z([\bG,\bG])\subseteq Z(\bG)$ as $\bG=[\bG,\bG]Z(\bG)$, so the block $B$ is 
also of central defect in $G$, as claimed. Moreover, as each $\bar B_i$
contains a unique ordinary character, we also have $\Irr(B')=\{\chi'\}$ for
some ordinary (unipotent) character $\chi'$ of $G'$. Since this is the
restriction of an irreducible character of $G$, and $G/G'$ is abelian, all
characters of $G$ above $\chi'$ have the same degree, hence the height zero
conjecture holds for $B$ in this case.
\end{proof}

\begin{prop}   \label{prop:discon}
 Let $G$ be a finite group, $N\triangleleft G$ a normal subgroup of index
 prime to $p$ with $G/N$ either cyclic or a Klein four group. Let $b$ be a
 $p$-block of $G$, and $b'$ a $p$-block of $N$ lying below $b$. Then:
 \begin{enumerate}
  \item[\rm(a)] $b$ and $b'$ have isomorphic defect groups.
  \item[\rm(b)] Assume that $b'$ has two height~0 characters $\chi_1,\chi_2$ of
   different degrees, and let $r$ be a prime for which the $r$-parts of
   $\chi_1(1),\chi_2(1)$ differ. If $\gcd(r,|G:N|)=1$ then $b$ also has two
   height~0 characters of different degrees.
 \end{enumerate}
\end{prop}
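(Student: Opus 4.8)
The plan is to analyze the relationship between characters of $N$ and $G$ using Clifford theory, exploiting that $G/N$ has order prime to $p$ and is either cyclic or Klein four.

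First I would establish part~(a). Since $b'$ lies below $b$, a defect group $D'$ of $b'$ and a defect group $D$ of $b$ satisfy $D'\subseteq D$, and standard block theory (Knörr, or the covering results in \cite{N}) gives that $D'=D\cap N$ and, more precisely, that $D'$ is a Sylow $p$-subgroup of $D\cap N$. Since $|G:N|$ is prime to $p$, we have $D\subseteq N$: indeed a defect group is a $p$-subgroup of $G$, and as $[G:N]$ is coprime to $p$, every $p$-subgroup of $G$ lies in the (normal) subgroup $N$ (because the image of $D$ in $G/N$ is both a $p$-group and of order dividing $|G/N|$, hence trivial). Therefore $D\subseteq N$ forces $D=D\cap N=D'$ up to conjugacy, giving isomorphic defect groups.

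For part~(b) I would argue as follows. Because $|G:N|$ is coprime to $p$, a character $\chi\in\Irr(N)$ has height zero in $b'$ if and only if it has $p'$-degree relative to the defect $d(b')=d(b)$ of the block; and the same criterion governs height zero in $b$. I would take the two given height-zero characters $\chi_1,\chi_2\in\Irr(b')$ with differing $r$-parts. Each $\chi_i$ lies under some $\psi_i\in\Irr(b)$, and by Clifford theory $\psi_i(1)=e_i\,t_i\,\chi_i(1)$, where $t_i=[G:T_i]$ is the number of $G$-conjugates of $\chi_i$ (with $T_i$ the inertia group) and $e_i$ is the ramification index; both $e_i$ and $t_i$ divide $|G:N|$. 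The key point is that since $G/N$ is cyclic or Klein four, whenever $\chi_i$ is $G$-invariant it extends to $G$ (for cyclic $G/N$ this is automatic; for the Klein four case one uses that the obstruction in $H^2(G/N,\CC^\times)$ vanishes because the Schur multiplier of a Klein four group is trivial), so in the invariant case $e_i=1$ and the characters above $\chi_i$ all have degree $|G:T_i|\chi_i(1)$ with $|G:T_i|$ a $p'$-number dividing $|G:N|$. In every case $\psi_i(1)/\chi_i(1)$ is a $p'$-integer dividing $|G:N|$, which by hypothesis is coprime to $r$.

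The main obstacle — and the crux of the argument — is to conclude that $\psi_1(1)\ne\psi_2(1)$, i.e. that the differing $r$-parts of $\chi_1(1),\chi_2(1)$ survive the passage up to $G$. This is exactly where the coprimality $\gcd(r,|G:N|)=1$ is used: since $\psi_i(1)=(\psi_i(1)/\chi_i(1))\cdot\chi_i(1)$ and the factor $\psi_i(1)/\chi_i(1)$ divides $|G:N|$, it is an $r'$-number, so $\psi_i(1)_r=\chi_i(1)_r$. As the $r$-parts $\chi_1(1)_r\ne\chi_2(1)_r$ differ by assumption, we get $\psi_1(1)_r\ne\psi_2(1)_r$ and hence $\psi_1(1)\ne\psi_2(1)$. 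It remains to check that $\psi_1,\psi_2$ are both of height zero in $b$: since they have $p'$-index over $N$ and $\chi_i$ had height zero in $b'$ with $d(b')=d(b)$ from part~(a), their degrees have the correct $p$-part, so they indeed lie at height zero in $b$. This produces the two desired height-zero characters of $b$ of different degrees.
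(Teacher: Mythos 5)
Your overall strategy is the same as the paper's: lift $\chi_1,\chi_2$ to characters $\psi_1,\psi_2$ of $b$ lying above them, observe that heights are preserved because $p\nmid|G:N|$ and (by part~(a)) the defect groups agree, and use Clifford theory together with $\gcd(r,|G:N|)=1$ to see that the $r$-parts, hence the degrees, of $\psi_1(1)$ and $\psi_2(1)$ still differ. Your part~(a) (Kn\"orr's theorem plus the observation that every $p$-subgroup of $G$ lies in $N$ when $p\nmid|G:N|$) is fine; the paper simply declares this assertion well-known.

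There is, however, a genuine error in your treatment of the Klein four case. You assert that a $G$-invariant character of $N$ extends to $G$ because ``the Schur multiplier of a Klein four group is trivial.'' This is false: $H^2(C_2\times C_2,\mathbb{C}^\times)\cong\ZZ/2\ZZ$, and invariant characters need not extend. For instance, take $G=Q_8$ and $N=Z(G)\cong C_2$, so that $G/N$ is Klein four; the faithful linear character of $N$ is $G$-invariant but does not extend, since every linear character of $Q_8$ has $Z(G)$ in its kernel, and the unique irreducible character of $G$ above it has degree~$2$. So your step ``in the invariant case $e_i=1$'' can fail. Fortunately this does not sink the argument, because all you actually use is the weaker statement that $\psi_i(1)/\chi_i(1)$ divides $|G:N|$, and that holds in general: for an invariant character the degree ratio divides the index of the normal subgroup (Isaacs, \emph{Character Theory of Finite Groups}, Cor.~11.29, via character triples and Ito's theorem), and concretely, a nontrivial cocycle on the Klein four group admits a single projective irreducible, of degree~$2$, so the ratio is~$2$, which is still prime to $r$ since $r$ must be odd whenever $|G:N|=4$. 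With that repair your proof is correct and coincides with the paper's (much terser) argument.
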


\begin{proof}
The first assertion is well-known. For the second, let $\nu_i$ be a character
of $b$ lying above $\chi_i$, for $i=1,2$. Since $p$ does not divide the index
$|G:N|$, $\nu_i$ is again of height~0. Furthermore, the assumption that
$\gcd(r,|G:N|)=1$ and $G/N$ is cyclic or Klein four implies by Clifford theory
that $\nu_1(1)\ne\nu_2(1)$.
\end{proof}

\section{Blocks of groups of Lie type} \label{sec:Lie}

\begin{prop}   \label{prop:defchar}
 The assertion of Theorem~\ref{main} holds when $S$ is a simple group of Lie
 type and $p$ is the defining characteristic.
\end{prop}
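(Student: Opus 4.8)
The plan is to exploit the very special structure of block theory in the defining characteristic, where the only possible EHZD blocks turn out to be the blocks of defect zero. Write $G=\bG^F$ with $\bG$ simply-connected (the finitely many non-generic covers being treated as elsewhere in the paper), let $p=r$ be the defining characteristic, and let $G^*$ be a group in duality with $G$. The structural input I would invoke is the classical description of the $p$-blocks of $G$ in the defining characteristic (Humphreys; see also \cite{CE}): every $p$-block of $G$ has either trivial defect group or a defect group equal to a full Sylow $p$-subgroup of $G$, and the blocks of full defect are indexed by the linear characters $\theta$ of the $p'$-part $Z(G)_{p'}$ of the centre, the block $B_\theta$ consisting of all irreducible characters of positive defect lying over $\theta$.

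If $B$ has defect zero there is nothing to prove: its defect group is trivial, hence abelian, the Height Zero Conjecture holds trivially, and $B$ is nilpotent, so the conclusion of Theorem~\ref{main} holds for $B$ with no exception. The real work is therefore to show that a block of full defect is \emph{never} EHZD. For this I would use that the height zero characters of a full-defect block are precisely its characters of $p'$-degree, and that in the defining characteristic these are exactly the semisimple characters $\chi_s$ it contains, of degree $\chi_s(1)=|G^*:C_{G^*}(s)|_{p'}$. It then suffices to exhibit, inside each $B_\theta$, two semisimple characters $\chi_{s_1},\chi_{s_2}$ of different degree. Since $\bG$ has positive semisimple rank, the semisimple classes of $G^*$ lying over $\theta$ (those with prescribed image in $Z(G^*)^{F^*}\cong\Irr(Z(G)_{p'})$) contain one of maximal centralizer — for $\theta=1$ this is $s_1=1$, giving the trivial character of degree~$1$ — and a regular semisimple one $s_2$, whose connected centralizer is a maximal torus $T$ and for which $\chi_{s_2}(1)=|G^*:T|_{p'}>1$. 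Both are of $p'$-degree, hence lie in $B_\theta$ at height zero, and their degrees differ, so $B_\theta$ is not EHZD.

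Consequently every EHZD block in the defining characteristic is of defect zero, which gives the assertion of Theorem~\ref{main} directly: the defect group is abelian, the block is nilpotent, and neither exceptional family (1) nor (2) of that theorem can occur here. (The reduction recorded after Theorem~\ref{main}, together with Section~\ref{abelian}, would alternatively deliver nilpotency once abelian defect and the Height Zero Conjecture are known.) The step I expect to be the main obstacle is the second one: guaranteeing that two semisimple characters of distinct degree live in a \emph{single} full-defect block, rather than merely somewhere in $G$. This is exactly where the precise block distribution of the $p'$-degree characters recalled above is needed, and it requires a little care for the smallest groups $G(q)$ with $q\in\{2,3\}$, where a regular semisimple class with the prescribed central image may have to be produced by hand; these cases are however either non-simple or small enough to be verified directly.
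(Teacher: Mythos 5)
Your overall route coincides with the paper's: invoke Humphreys' description of the defining-characteristic blocks, and then exhibit two semisimple characters of distinct $p'$-degree inside each block $B_\theta$ of full defect. You also correctly identify the crux, namely that the two characters must lie over the \emph{same} $\theta\in\Irr(Z(G))$. The gap is in your proposed solution of that crux. For $\theta\ne1$ you compare a semisimple class over $\theta$ ``of maximal centralizer'' with a regular semisimple class over $\theta$, but nothing guarantees that these two classes have different centralizer $p'$-orders: when $\theta\ne1$ the maximal-centralizer class over $\theta$ can itself be regular. This actually happens: for $G=\SL_2(q)$ one has $G^*=\mathrm{PGL}_2(q)$ and $[G^*,G^*]=\mathrm{PSL}_2(q)$, and \emph{every} semisimple element of $G^*$ outside $[G^*,G^*]$ is regular (the identity is the only non-regular semisimple element of $\mathrm{PGL}_2$), so your two elements need not produce any degree difference. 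A further inaccuracy in the same step: since $\Ce_{G^*}(s)$ may be disconnected, the degree of $\chi_{s}$ is $|G^*:\Ce_{G^*}(s)|_{p'}$ rather than $|G^*:T|_{p'}$; this is precisely how the faithful degree $(q-1)/2$ of $\SL_2(q)$, $q\equiv1\pmod4$, arises. Your dichotomy does work for $\theta=1$ (take $s_1=1$), but that is the principal block, which is the easy case already settled by comparing the trivial character with any nonlinear one.

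The paper's proof supplies the idea your argument is missing: instead of ``largest centralizer versus regular'', it compares two \emph{regular} classes lying in non-conjugate maximal tori of different order. For each type with $Z(G)\ne1$ it lists two maximal tori $T_1,T_2$ of $G^*$ whose orders are divisible by distinct Zsigmondy primes $\ell_1,\ell_2$ (Tables 3 and 4); any semisimple element of $T_i$ whose order is divisible by $\ell_i$ automatically has centralizer exactly $T_i$, and such elements can be chosen over any prescribed $\theta$ because each $T_i$ surjects onto $G^*/[G^*,G^*]\cong\Irr(Z(G))$ (this block-matching is implicit in the paper's ``we are done'' step). This yields height-zero characters of $B_\theta$ of the distinct degrees $|G^*:T_1|_{p'}\neq|G^*:T_2|_{p'}$. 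Finally, the genuinely exceptional cases are not, as you suggest, the small fields $q\in\{2,3\}$, but the small ranks where a Zsigmondy prime is missing: types $A_1$ (for all $q$), $A_2$, $\tw2A_2$ and $B_2$. For these the paper reads off pairs of faithful irreducible characters of distinct $p'$-degree from the known character tables of $\SL_2(q)$, $\SL_3(q)$, $\SU_3(q)$ and $\Sp_4(q)$; your proposal would need to do the same, for all $q$ in these types, not just for tiny fields.
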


\begin{proof}
By the result of Humphreys \cite{Hum} the covering group $G$ of $S$ has exactly
one $p$-block of defect zero, consisting of the Steinberg character, and all
other $p$-blocks are of full defect, in one-to-one correspondence with the
irreducible characters of $Z(G)$. For the principal block, it is clear that
there exist two height~0 characters of distinct degree (viz. the trivial
character and at least one further non-linear character). For the remaining
blocks, some more work is needed. By the above we may now assume that
$Z(G)\ne1$, so in particular $p$ is odd for classical groups not of types
$A_n$ or $\tw2A_n$. 
\par
Recall that $Z(G)$ is naturally isomorphic to the commutator factor group
$G^*/[G^*,G^*]$ of the dual group $G^*$ of $G$. If $s\in G^*$ is semsimple,
the corresponding semisimple character $\chi_s\in\Irr(G)$ is of $p'$-degree
given by $\chi_s(1)=|G:\Ce_{G^*}(s)|_{p'}$
(see for example \cite[(2.1)]{MaH}; note that $\Ce_{G^*}(s)$ is not necessarily
connected). So we are done if we can find two semisimple elements
$s_1,s_2\in G^*$ whose centralizer orders have different $p'$-part.

\begin{table}[htbp]
\caption{Tori and Zsigmondy primes for classical groups}
\label{tab:toriclass}
 $\begin{array}{lccll}
\hline
 & |T_1|& |T_2|& \ell_1& \ell_2\cr
\hline
 A_n& (q^{n+1}-1)/(q-1)& q^n-1& l(n+1)& l(n)\cr
 \tw2A_n\ (n\ge2$ even$)& (q^{n+1}+1)/(q+1)& q^n-1& l(2n+2)& l(n)\cr
 \tw2A_n\ (n\ge3$ odd$)& (q^{n+1}-1)/(q+1)& q^n+1& l(n+1)& l(2n)\cr
 B_n,C_n\ (n\ge2$ even$)& q^n+1& (q^{n-1}+1)(q+1)& l(2n)& l(2n-2)\cr
 B_n,C_n\ (n\ge3$ odd$)& q^n+1& q^n-1& l(2n)& l(n)\cr
 D_n\ (n\ge4$ even$)& (q^{n-1}-1)(q-1)& (q^{n-1}+1)(q+1)& l(n-1)& l(2n-2)\cr
 D_n\ (n\ge5$ odd$)& q^n-1& (q^{n-1}+1)(q+1)& l(n)& l(2n-2)\cr
 \tw2D_n (n\ge4)& q^n+1& (q^{n-1}+1)(q-1)& l(2n)& l(2n-2)\cr
\hline
\end{array}$\end{table}

In Table~\ref{tab:toriclass} we have listed two maximal tori $T_1$, $T_2$ of
$G^*$ for each type of classical group $G$ (by giving their orders, which 
determines
them uniquely). Except for types $B_n,C_n$ with $n$ even this is Table~3.5 in
Malle \cite{MaT}. We write $l(m)$ for a Zsigmondy prime divisor of $q^m-1$.
Then $|T_i|$ is divisible by the Zsigmondy prime $\ell_i$ as indicated in the
table, which exists unless $G$ is of type $A_1$, or $G$ is of type $A_2$,
$\tw2A_2$ or $B_2$ and $i=2$. (Note that the case that $q=2$ and $G$ of type
$A_5$, $A_6$ or $\tw2A_6$ does not concern us here, since then the center of
$G$ is trivial.) If $|T_i|$ is divisible by a Zsigmondy prime
$\ell_i$, then there exist regular semisimple elements $s_i$ of order $\ell_i$
in $G^*$, that is, elements with centralizer order $|\Ce_{G^*}(s_i)|=|T_i|$.
If both Zsigmondy primes exist, this yields two semisimple characters of
different degrees, and we are done. \par
So now assume that $G$ is of type $A_1$, $A_2$, $\tw2A_2$ or $B_2$. From the
known character tables it can be seen that the group
$\SL_2(q)$ has faithful irreducible characters of degrees $q+1$, $(q-1)/2$ for
$q\equiv1\pmod4$, and of degrees $q-1$, $(q+1)/2$ for $7\le q\equiv3\pmod4$.
The group $\SL_3(q)$, $q\equiv1\pmod3$, has faithful irreducible characters of
degrees $q^2+q+1$ and $(q-1)(q^2-1)$, the group $\SU_3(q)$, $2<q\equiv2\pmod3$,
has faithful irreducible characters of degrees $q^2-q+1$ and $(q+1)(q^2-1)$,
and the group $\Sp_4(q)$, $q$ odd, has faithful irreducible characters of
degrees $(q^2-1)/2$ and $q^4-1$, which are clearly distinct and of $p$-height
zero.
\par
The only exceptional simply-connected groups with non-trivial center are those
of types $E_6$, $\tw2E_6$ and $E_7$. For these, we may argue as above using the
maximal tori and Zsigmondy primes listed in Table~\ref{tab:toriexc}. The proof 
is  complete.
\end{proof}

\begin{table}[htbp]
\caption{Tori $T_1$ and $T_2$}
\label{tab:toriexc}
 \[\begin{array}{rccll}
\hline
 G& |T_1|& |T_2|& \ell_1& \ell_2\cr
\hline
     E_6(q)& \Phi_{12}\Phi_3& \Phi_9& l(12)& l(9)\cr
 \tw2E_6(q)& \Phi_{18}& \Phi_{12}\Phi_6& l(18)& l(12)\cr
     E_7(q)& \Phi_{18}\Phi_2& \Phi_{14}\Phi_2& l(18)& l(14)\cr
\end{array}\]\end{table}

We now turn to the non-defining primes for groups of Lie type.

According to the work of Brou\'e--Michel \cite{BrMi}, for any $p$-block $B$
of $G$
there exists a unique $G^*$-conjugacy class $[s]$ of semisimple $p'$-elements
of $G^*$, such that some irreducible representation of $B$ is in the rational
Lusztig series attached to $[s]$. Let's write $\cE_p(G,s)$ for the union of all
$p$-blocks of $G$ associated with the class of the $p'$-element $s\in G^*$.
The blocks in $\cE_p(G,1)$ are called {\em unipotent}. More generally, if
$\bG$ is disconnected, then a block of $\bG^F$ is called unipotent if it covers
a unipotent block of $(\bG^\circ)^F$.
We need the following crucial result
of Enguehard \cite[Th.~1.6]{En08}:

\begin{thm}[Enguehard]   \label{thm:eng}
 Assume that $p$ is good for $\bG$, and different from~3 if $F$ induces a
 triality automorphism on $\bG$. \par
 Let $s\in G^*$ be a semisimple $p'$-element, and $B$ a $p$-block in
 $\cE_p(G,s)$. Then there exists a reductive group
 $\bG(s)$ defined over $\FF_r$, with corresponding Frobenius map again denoted
 by $F$, and a unipotent $p$-block $b$ of $G(s):=\bG(s)^F$, such that the
 defect groups of $B$ and $b$ are isomorphic and there is a height-preserving
 bijection $\Irr(B)\rightarrow\Irr(b)$. Here, $\bG(s)^\circ$ is a group in
 duality
 with $C_{\bG^*}(s)^\circ$, and
 $\bG(s)/\bG(s)^\circ\cong C_{\bG^*}(s)/C_{\bG^*}(s)^\circ$.
\end{thm}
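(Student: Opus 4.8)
The plan is to establish the theorem as a ``Jordan decomposition for $p$-blocks'', combining the Morita equivalence of Bonnaf\'e--Rouquier with the Cabanes--Enguehard theory of unipotent blocks. Observe first that the group $\bG(s)$ is essentially forced on us: $\bG(s)^\circ$ is prescribed to be in duality with $C_{\bG^*}(s)^\circ$ and the component group is prescribed to be $C_{\bG^*}(s)/C_{\bG^*}(s)^\circ$. Thus the entire content lies in matching the $p$-blocks inside $\cE_p(G,s)$ with the unipotent $p$-blocks of $G(s)$, together with their defect groups and the heights of their characters.

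First I would reduce to the case where $s$ is quasi-isolated in $\bG^*$. Choose a minimal $F$-stable Levi subgroup $\bL^*$ of $\bG^*$ containing $C_{\bG^*}(s)$, and let $\bL$ be the Levi of $\bG$ in duality with $\bL^*$, with $L:=\bL^F$. Then $s$ is quasi-isolated in $\bL^*$ and $C_{\bL^*}(s)=C_{\bG^*}(s)$, so the group attached to the pair $(L,s)$ coincides with $\bG(s)$. By the theorem of Bonnaf\'e--Rouquier, Lusztig induction $R_{\bL}^{\bG}$ induces a Morita equivalence between the sum of blocks constituting $\cE_p(G,s)$ and the sum constituting $\cE_p(L,s)$, preserving defect groups and heights of characters. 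Hence it suffices to treat $L$ in place of $G$, and we may assume that $s$ is quasi-isolated.

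Next I would exploit the hypotheses on $p$. Because $p$ is good and $\ne3$ under triality, the component group $A(s):=C_{\bG^*}(s)/C_{\bG^*}(s)^\circ$ has order prime to $p$, so the disconnectedness of $C_{\bG^*}(s)$ — and correspondingly of $\bG(s)$ — does not disturb $p$-block theory. Lusztig's Jordan decomposition of characters supplies a bijection $\cE(G,s)\to\cE(G(s))$ onto the unipotent characters of $G(s)$ under which all degrees are multiplied by the fixed factor $|G^*:C_{\bG^*}(s)|_{r'}$, where $r$ is the defining characteristic. Writing $|G|_p=p^a$ and $|C_{\bG^*}(s)^F|_p=p^{a_C}$, so that $|G^*:C_{\bG^*}(s)|_p=p^{a-a_C}$, the degree relation gives $\chi(1)_p=p^{a-a_C}\psi(1)_p$ for corresponding $\chi,\psi$. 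Combining the defect--height relations $\chi(1)_p=p^{a-d(B)+h(\chi)}$ and $\psi(1)_p=p^{a_C-d(b)+h(\psi)}$ yields $h(\chi)-h(\psi)=d(b)-d(B)$; hence, once $B$ and $b$ are matched with defect groups of equal order, the bijection is automatically height-preserving. In other words, height preservation will be a formal consequence of the defect-group statement.

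Finally, the heart of the argument is to upgrade this character bijection to a bijection of $p$-blocks carrying defect groups to isomorphic ones. I would run this through $d$-Harish-Chandra theory, with $d$ the multiplicative order of $q$ modulo $p$: by Cabanes--Enguehard the $p$-blocks inside a Lusztig series are parametrized by $d$-cuspidal pairs, and Jordan decomposition is compatible with $d$-split Levi subgroups and $d$-cuspidal pairs, so the partition of $\cE_p(G,s)$ into blocks corresponds to the partition of $\cE(G(s))$ into unipotent blocks, with defect groups identified as the same $\Phi_d$-local objects (Sylow $\Phi_d$-tori of the relevant centralizers, again using that $p$ is good). The main obstacle is exactly the quasi-isolated case produced by the first reduction: there $C_{\bG^*}(s)$ lies in no proper Levi, Bonnaf\'e--Rouquier is unavailable, and one is forced into an explicit type-by-type analysis based on the classification of quasi-isolated semisimple elements together with the explicit description of unipotent blocks, verifying in each case that the defect groups and heights transport correctly. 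It is this case-by-case bookkeeping, rather than any single conceptual step, in which essentially all the difficulty resides.
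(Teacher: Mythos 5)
You should first be aware that the paper does not prove this statement at all: it is quoted as Enguehard's theorem, with the citation \cite[Th.~1.6]{En08} standing in for a proof (the result is the main theorem of an 80-page memoir). So your proposal is not being measured against an argument in the text, but against Enguehard's paper itself. Your outline does reconstruct the accepted overall strategy — reduce to quasi-isolated $s$ via the Bonnaf\'e--Rouquier equivalence, then analyse the quasi-isolated cases using the parametrization of unipotent blocks by $d$-cuspidal pairs — and your observation that height preservation is a formal consequence of the degree relation once defects are matched is correct (up to a sign: one gets $h(\chi)-h(\psi)=d(B)-d(b)$).

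Nevertheless there are genuine gaps. First, your reduction asserts that the Bonnaf\'e--Rouquier Morita equivalence ``preserves defect groups''. What a Morita equivalence gives is equality of Cartan matrices, hence equality of defect group \emph{orders}; the statement that the Bonnaf\'e--Rouquier bimodule preserves defect groups up to isomorphism was not available at the time of \cite{En08} (it is a substantially later theorem of Bonnaf\'e--Dat--Rouquier), and working around exactly this point is part of what makes Enguehard's proof long. Since the theorem you are proving asserts an isomorphism of defect groups, your very first step already assumes something essentially as hard as the conclusion. Second, your claim that $|C_{\bG^*}(s)/C_{\bG^*}(s)^\circ|$ is prime to $p$ ``because $p$ is good'' is false as stated: in type $A$ every prime is good, yet the component group can have order divisible by any prime dividing $n$; the correct argument, which this paper uses elsewhere, is Bonnaf\'e's result \cite[Cor.~2.9]{Bo05} that this order divides the order of $s$, together with $s$ being a $p'$-element. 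Third, and decisively, the entire substance of the theorem — that Jordan decomposition is compatible with the partition of $\cE_p(G,s)$ into blocks and matches defect groups, above all when $s$ is quasi-isolated — is deferred in your final paragraph to ``case-by-case bookkeeping''. That bookkeeping \emph{is} Enguehard's theorem; a proposal that stops where the difficulty begins is a plan of attack, not a proof.
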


In the case of $p=2$ for classical groups, he proves \cite[Prop.~1.5]{En08}:

\begin{thm}[Enguehard]   \label{thm:eng2}
 Assume that $G$ is of classical type in odd characteristic. Let $s\in G^*$
 be a semisimple $p'$-elements. Then all 2-blocks in $\cE_2(G,s)$ have defect
 group isomorphic to a Sylow 2-subgroup of $\Ce_{G^*}(s)^\circ$. If moreover
 $G$ is of type $B_n$, $C_n$ or $D_n$, then $\cE_2(G,s)$ is a single 2-block.
\end{thm}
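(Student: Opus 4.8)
The plan is to derive both assertions from the Jordan decomposition of blocks of Bonnaf\'e--Rouquier together with the explicit structure of centralizers of semisimple $2'$-elements in classical groups. The feature that forces a separate argument from Theorem~\ref{thm:eng} is that at $p=2$ the component group $A(s):=\Ce_{\bG^*}(s)/\Ce_{\bG^*}(s)^\circ$ of a classical group is itself a $2$-group, rather than a $p'$-group; this is precisely why the good-prime hypothesis of Theorem~\ref{thm:eng} is unavailable, and why the defect group is described in terms of the \emph{connected} centralizer $\Ce_{G^*}(s)^\circ$.

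First I would reduce to the case of isolated $s$. Let $\bL^*$ be a minimal $F$-stable Levi subgroup of $\bG^*$ containing $\Ce_{\bG^*}(s)$, and let $\bL$ be an $F$-stable Levi of $\bG$ in duality with $\bL^*$, with $L=\bL^F$. If $\bL^*\ne\bG^*$, then by Bonnaf\'e--Rouquier the Deligne--Lusztig functor $R_{\bL}^{\bG}$ induces a Morita equivalence between the union of blocks $\cE_2(L,s)$ and $\cE_2(G,s)$ which preserves defect groups up to isomorphism and preserves the number of constituent blocks. Since $\Ce_{\bL^*}(s)=\Ce_{\bG^*}(s)$, it suffices to prove both statements when $s$ is isolated in $\bG^*$.

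In the isolated case $\Ce_{\bG^*}(s)^\circ$ is a product of classical groups together with linear and unitary factors arising from the eigenspaces of $s$. For the defect-group assertion I would use that each such factor has, at $p=2$, a single unipotent $2$-block: for a classical factor of type $B,C,D$ this is the principal block by \cite[Th.~21.14]{CE}, and for a factor $\GL_m(q)$ or $\SU_m(q)$ it again is the principal block, since $2\mid q^2-1$ forces the relevant order to equal~$1$. Being principal, this unipotent $2$-block has full defect, so the corresponding block of $\cE_2(G,s)$ has defect group a Sylow $2$-subgroup of $\Ce_{G^*}(s)^\circ$; the Morita equivalence then carries this isomorphism type back to every block of $\cE_2(G,s)$. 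For the single-block statement, when $G$ is of type $B_n,C_n,D_n$ the $2'$-element $s$ cannot have $-1$ as an eigenvalue, so isolation forces all eigenvalues to equal~$1$ and $s$ to be central; then $\cE_2(G,s)$ is a twist of the unipotent $2$-blocks of $G$, which by \cite[Th.~21.14]{CE} is a single block, and the Morita equivalence transports this back.

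The step I expect to be the main obstacle is the precise control of the disconnected part $A(s)$. Since $A(s)$ is a $2$-group, at $p=2$ it genuinely interacts with both the block decomposition and the defect, so one must show that passing from the connected centralizer to $\Ce_{G^*}(s)$ neither splits the unipotent block into several blocks nor enlarges the defect beyond a Sylow $2$-subgroup of $\Ce_{G^*}(s)^\circ$. This is exactly where types $A_n$, $\tw2A_n$ and $\tw2D_n$ differ from $B_n,C_n,D_n$: in the linear and unitary cases isolated $2'$-elements need not be central, and passage to the simply connected group can split the unipotent $2$-block, so that the single-block conclusion genuinely fails. Disentangling the effect of $A(s)$, by means of Clifford theory between $(\Ce_{\bG^*}(s)^\circ)^F$ and $\Ce_{\bG^*}(s)^F$ and the self-duality of the relevant defect data, is the technical heart of the argument.
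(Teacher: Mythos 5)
There is nothing in the paper to compare your argument with: Theorem~\ref{thm:eng2} is quoted, with attribution, from Enguehard, and its proof is the content of \cite[Prop.~1.5]{En08} (a long, direct analysis of the $2$-blocks of classical groups), not of this paper. Measured on its own terms, your proposal has a genuine circularity at the decisive step. The Bonnaf\'e--Rouquier equivalence applies exactly when the full centralizer $\Ce_{\bG^*}(s)$ lies in a proper Levi subgroup, so your reduction leaves the \emph{quasi-isolated} case (not the isolated one; a minor slip). In types $B_n$, $C_n$, $D_n$ and $\tw2D_n$ all quasi-isolated elements of the adjoint dual group are $2$-elements by Bonnaf\'e's classification \cite{Bo05}, so a quasi-isolated $2'$-element is central and your appeal to \cite[Th.~21.14]{CE} is sound there. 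But the theorem concerns all classical types, and in types $A_n$ and $\tw2A_n$ there exist non-central quasi-isolated $2'$-elements (elements of odd order $d\mid n+1$ whose eigenvalues form a single orbit under multiplication by $d$th roots of unity). For these, your defect-group argument reads: the unipotent $2$-block of each factor of (the dual of) $\Ce_{\bG^*}(s)^\circ$ is principal, hence of full defect, ``so the corresponding block of $\cE_2(G,s)$ has defect group a Sylow $2$-subgroup of $\Ce_{G^*}(s)^\circ$.'' The existence of a correspondence between blocks of $\cE_2(G,s)$ and unipotent blocks of the centralizer that matches defect groups is precisely a Jordan decomposition of blocks at $p=2$: Theorem~\ref{thm:eng} does not supply it here (it is stated for good $p$, and Enguehard establishes it for odd primes, which is exactly why his Prop.~1.5 must cover types $A_n,\tw2A_n$ at $p=2$ as well), and supplying it is the content of the very statement you are proving. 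Your final paragraph concedes that handling $A(s)$ is ``the technical heart,'' but offers a plan rather than an argument, so the hardest case is assumed, not proved.

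Two further steps are unjustified. First, you assert that the Bonnaf\'e--Rouquier Morita equivalence ``preserves defect groups up to isomorphism''; a Morita equivalence is only known to preserve the \emph{order} of a defect group (via the Cartan matrix), and transporting the isomorphism type requires the splendid refinement of Bonnaf\'e--Rouquier, which is neither quoted nor was available when this paper was written. Second, even granting your block correspondence, ``full defect'' yields a Sylow $2$-subgroup of the dual group $G(s)$ of $\Ce_{\bG^*}(s)^\circ$, not of $\Ce_{G^*}(s)^\circ$ itself; these have equal order but need not be isomorphic (already $\SL_2(q)$ has generalized quaternion and $\mathrm{PGL}_2(q)$ dihedral Sylow $2$-subgroups), so this final identification, implicit both in your write-up and in the statement as transcribed, also requires an argument tailored to the centralizers that actually occur.
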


\begin{prop}
 The assertion of Theorem~\ref{main} holds if $G$ is quasi-simple of Lie-type.
\end{prop}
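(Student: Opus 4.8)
The plan is to reduce every block to a unipotent one by means of the block-theoretic Jordan decompositions of Brou\'e--Michel, Bonnaf\'e--Rouquier and Enguehard, and then to feed in the dichotomy for unipotent blocks proved in Theorem~\ref{thm:unipblockgeneral}. As in Section~\ref{sec:unip} I may assume $G=\bG^F$ with $\bG$ simple of simply-connected type, the Tits group being treated separately. By Proposition~\ref{prop:defchar} the assertion holds when $p$ is the defining characteristic, so from now on I assume $p\ne r$. As noted after Theorem~\ref{main}, it then suffices to establish for the given EHZD block $B$ that (1) its defect group is abelian and (2) $B$ satisfies the Height Zero Conjecture; granting these, the abelian-defect result of Section~\ref{abelian} shows that $B$ is nilpotent. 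By Brou\'e--Michel, $B$ lies in $\cE_p(G,s)$ for a well-defined $G^*$-class of semisimple $p'$-elements $s\in G^*$.

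The main case is where $p$ is good for $\bG$ and $p\ne3$ when $F$ induces a triality. Here I would invoke Enguehard's Theorem~\ref{thm:eng} to produce a unipotent $p$-block $b$ of $G(s)=\bG(s)^F$ whose defect group is isomorphic to that of $B$, together with a height-preserving bijection $\Irr(B)\to\Irr(b)$. The decisive extra observation is that, since this bijection arises from Jordan decomposition, it scales all character degrees by one and the same positive factor; hence $B$ is EHZD if and only if $b$ is, and $B$ has abelian defect (resp.\ satisfies the Height Zero Conjecture) if and only if $b$ does. As $\bG(s)$ may be disconnected, I would apply Theorem~\ref{thm:unipblockgeneral} to the unipotent block $b^\circ$ of $(\bG(s)^\circ)^F$ lying below $b$, and transport the conclusion up to $b$ by Proposition~\ref{prop:discon}, using that $\bG(s)/\bG(s)^\circ\cong C_{\bG^*}(s)/C_{\bG^*}(s)^\circ$ is cyclic or a Klein four group of order prime to $p$ and to $r$. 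The dichotomy then reads: either $b^\circ$ has central defect, so all its characters have equal degree and height zero, or it has two height-zero characters of different degrees (with different $r$-parts when $r\ne2$, which is exactly what Proposition~\ref{prop:discon} requires). The EHZD hypothesis on $B$ rules out the second alternative, so $b$, and with it $B$, has abelian (indeed central) defect and satisfies the Height Zero Conjecture.

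It remains to treat the bad primes. For $G$ of classical type the only bad prime is $p=2$, with $r$ odd; here I would replace Theorem~\ref{thm:eng} by Enguehard's Theorem~\ref{thm:eng2}, which identifies the defect group of $B$ with a Sylow $2$-subgroup of $C_{\bG^*}(s)^{\circ F}$ and, for types $B_n,C_n,D_n$, shows that $\cE_2(G,s)$ is a single block. Lacking a character bijection, I would argue through the Jordan decomposition of \emph{characters}: the unipotent $2$-block of the group dual to $C_{\bG^*}(s)^\circ$ is its principal block, and, whenever the Sylow $2$-subgroup is non-central, it contains the trivial and the Steinberg character, whose degrees have distinct $r$-parts because $r$ is odd. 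Pulling these two characters back into $\cE_2(G,s)$ yields two height-zero characters of $B$ of different degrees, so the EHZD hypothesis again forces abelian defect and the Height Zero Conjecture. Finally, when $G$ is exceptional and $p$ is bad (including $\tw3D_4$ with $p=3$), I would use Bonnaf\'e--Rouquier to reduce a block with $s$ not quasi-isolated to a block of a proper $F$-stable Levi subgroup, where $p$ is good or the group is small and hence covered by the previous paragraphs; the genuinely quasi-isolated blocks of exceptional groups at bad primes are precisely the exceptions~(2) of Theorem~\ref{main}.

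The hard part will be the bad-prime analysis together with the passage to disconnected centralizers. Transferring the EHZD property through Enguehard's bijection relies on the degree-scaling being by a single constant, and the step from $(\bG(s)^\circ)^F$ to $\bG(s)^F$ via Proposition~\ref{prop:discon} is available only because the component groups are cyclic or Klein four of order prime to $p$ and $r$; securing a distinguishing prime coprime to that order when $r=2$ will need extra care. In the exceptional bad-prime case the non-quasi-isolated reduction is clean, but the quasi-isolated blocks resist this uniform treatment and are exactly what is left open as stated exceptions.
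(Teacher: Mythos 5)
Your proposal follows essentially the paper's own route: the same preliminary reductions (defining characteristic, Tits group, $G=\bG^F$ simply connected), the Brou\'e--Michel partition into the sets $\cE_p(G,s)$, Enguehard's Jordan decomposition of blocks (Theorem~\ref{thm:eng}) fed into Theorem~\ref{thm:unipblockgeneral} and Proposition~\ref{prop:discon} when $p$ is good and non-triality, Theorem~\ref{thm:eng2} together with the trivial-versus-Steinberg dichotomy for $p=2$ in types $B_n,C_n,D_n,\tw2D_n$, and Bonnaf\'e--Rouquier for non-quasi-isolated classes. The only organizational difference is that the paper applies Bonnaf\'e--Rouquier first, to every non-quasi-isolated $s$ regardless of the prime, and only then invokes Enguehard; your ordering covers the same ground. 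On one point you are more careful than the paper: you state explicitly that the Jordan-decomposition bijections multiply all degrees in the block by one common constant, which is what actually allows the EHZD hypothesis and its negation to be transported between $B$ and the unipotent block $b$ (the paper uses this tacitly when it writes "the claim follows"). Your worry about the defining prime $r=2$ also resolves itself: by Proposition~\ref{prop:degclass} the different-$r$-parts statement can only fail for components of type $D_n$ or $\tw2D_n$ with $p=3$, hence (for good $p$) only when $\bG$ is classical of type $B$, $C$ or $D$, where the fundamental group is a $2$-group and the component group $C_{\bG^*}(s)/C_{\bG^*}(s)^\circ$, being of order prime to $r=2$, is trivial --- so Proposition~\ref{prop:discon} is not even needed in that situation.

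The genuine gap is in your last paragraph. The paper does not consign all quasi-isolated bad-prime blocks of exceptional groups to exception~(2): for $G_2$, $\tw2G_2$, $\tw2F_4$ and $\tw3D_4$ it settles the assertion outright, citing the explicit block determinations of Hiss--Shamash, Fong, Malle and Deriziotis--Michler. Sweeping these small groups into the exception clause is formally tolerable for $G_2$, $\tw2G_2$ and $\tw2F_4$ (though it proves a strictly weaker result than the paper does), but it fails for $\tw3D_4$ with $p=3$: the prime $3$ is good for the root system $D_4$, so a quasi-isolated non-unipotent $3$-block of $\tw3D_4(q)$ is \emph{not} covered by exception~(2) as stated; at the same time it lies outside the reach of Theorem~\ref{thm:eng} (excluded because $F$ induces triality) and of the Bonnaf\'e--Rouquier reduction (the class is quasi-isolated), and such blocks do exist. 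You need a separate argument here; the paper disposes of this case via the block data of Deriziotis--Michler for $\tw3D_4(q)$.
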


\begin{proof}
By Proposition~\ref{prop:defchar} we may assume that $p$ is not the defining
characteristic for $G$, and by Proposition~\ref{prop:spor} we have that
$S{\not\cong}\tw2F_4(2)'$. Furthermore, by the remarks at the beginning of
Section~\ref{sec:unip} we have that $G=\bG^F$ for some simple, simply connected
algebraic group $\bG$ with Frobenius map $F:\bG\rightarrow\bG$.
\par
Let $B$ be a $p$-block of $G$ and $s\in G^*$ semisimple such that
$B\subseteq\cE_p(G,s)$ (see above). First assume that $s$ is not
quasi-isolated in $G^*$, that is, $C_{G^*}(s)$ is a Levi subgroup of $G^*$. Then
by the result of Bonnaf\'e--Rouquier \cite[Th.~10.1]{CE} the block $B$ is
Morita-equivalent to a block $b\subseteq\cE_p(L,1)$ where $L$ is a Levi
subgroup of $G$ in duality with $C_{G^*}(s)$, and Jordan decomposition gives a
height preserving bijection from $B$ to $b$. We may then conclude by
Theorem~\ref{thm:unipblockgeneral}. \par
Next assume that $p$ is good for $G$, different from~3 if $G$ is of type
$\tw3D_4$. Then by Theorem~\ref{thm:eng} there is a group $\bG(s)$ in duality
with the centralizer $\bC:=C_{\bG^*}(s)$ of $s$ in $\bG^*$ and a height
preserving bijection between $B$ and a unipotent block $b$ of $G(s):=\bG(s)^F$ 
with the same defect group as $B$. By \cite[Cor.~2.9]{Bo05} the order
$a(s):=|\bC:\bC^\circ|$ of the component group of $\bC$ is prime to the
defining characteristic $r$ and divides the order of $s$. As $s$ is a
$p'$-element,
this implies that $a(s)$ is prime to $p$ as well. Moreover, by loc.~cit.\
$\bC/\bC^\circ$ is isomorphic to a subgroup of the fundamental group of $\bG$,
hence either cyclic or a Klein four group. Now let $b'$ be a $p$-block
of the normal subgroup $N:=(\bC^\circ)^F$ of $\bC^F=C_{G^*}(s)$ lying below $b$.
We showed in Theorem~\ref{thm:unipblockgeneral} that any unipotent block of the
connected group $N$ with non-abelian defect group contains two height~0
characters which are divisible by different powers of the defining prime~$r$.
Thus, Proposition~\ref{prop:discon} applies in this case and the claim follows.
\par
Now assume that $p=2$ and $G$ is of classical type $B_n$, $C_n$, $D_n$ or
$\tw2D_n$. Then $\cE_2(G,s)$ is a
single 2-block by Theorem~\ref{thm:eng2}. By Jordan decomposition the character
degrees in $\cE_2(G,s)$ are obtained from those in $\cE_2(C_{G^*}(s),1)$ by
multiplication with a common constant. If $C_{G^*}(s)^\circ$ is not a torus,
the trivial character and the Steinberg character in $\cE_2(C_{G^*}(s),1)$
have distinct degrees prime to~$p$ and the claim follows. On the other hand, if
$C_{G^*}(s)^\circ$ is a torus, that is, $s$ is a regular element in $G^*$,
then again by the Theorem~\ref{thm:eng2} of Enguehard the defect group of
$B$ is isomorphic to a Sylow 2-subgroup of $\Ce_{G^*}(s)^\circ$, hence abelian.
Moreover, by the result of Lusztig \cite{Lu88}, all characters in $B$ have
the same degree, whence $B$ satisfies the height zero conjecture.
\par
Thus we may assume that $G$ is of exceptional type, $p$ is a bad prime and
$s$ is quasi-isolated. There are no quasi-isolated elements for $\tw2B_2$. The
$p$-blocks for $\tw2G_2$, $G_2$, $\tw2F_4$ and $\tw3D_4$ have been determined
by Fong \cite{F74}, Hiss--Shamash \cite{HS90,HS92}, Malle \cite{Ma90},
Deriziotis--Michler \cite{DM87} respectively. The claim can be easily checked
from those results.

The remaining cases are the possible exceptions mentioned in the theorem.
\end{proof}

\section{Alternating and sporadic groups} \label{sec:alt}

In order to prove our main result for the alternating groups, we first
derive a similar statement for blocks of the symmetric group.

Recall that the irreducible characters of $\fS_n$ as well as the unipotent
characters of $\GL_n(q)$, where $q$ is any prime power, are parametrized by
partitions $\lambda\vdash n$. We write $\chi_\lambda$ resp.\ $\gamma_\lambda$
for the corresponding character of $\fS_n$, resp.\ of $\GL_n(q)$. The
following important connection between their degrees is well-known:
$\chi_\lambda$ is obtained by specializing $q$ to~1 in the degree polynomial
for $\gamma_\lambda$ (see for example the formula in \cite[13.8]{Ca} and
compare to the hook formula for $\chi_\lambda(1)$). This is sometimes referred
to by saying that $\fS_n$ is \lq the general linear group over the field with
one element\rq. \par
Furthermore, $\chi_\lambda$ and $\chi_\mu$ for two partitions
$\lambda,\mu\vdash n$ lie in the same $p$-block of $\fS_n$ if and only if
$\lambda$ and $\mu$ have the same $p$-core, which in turn happens if and only
if $\gamma_\lambda$ and $\gamma_\mu$ lie in the same $d$-Harish-Chandra
series of $\Irr(\GL_n(q))$, where $d=p$. Thus, the degrees of irreducible
characters of $\fS_n$ in a fixed $p$-block are specializations at $q=1$ of
degree polynomials of unipotent characters in a fixed $p$-Harish-Chandra series.
\par
Let $S=(S_1,\ldots,S_d)$ be a $d$-symbol. A {\em hook of $S$} is a pair
$h=(s,t)$ where
$$s\in S_i,\quad t\in\{0,\ldots,s\}\setminus S_j,\qquad
  \text{ with $j>i$ if $s=t$},$$
for some $1\le i,j,\le d$. We then also write $i(h):=i$,
$j(h):=j$, and $l(h):=s-t$. For 1-symbols, that is, $\beta$-sets of partitions,
this is just the usual notion of hook. We can now formulate the following
relative hook formula for characters in a fixed $p$-block of a symmetric group
which seems to be new:

\begin{thm}   \label{thm:hookformula}
 Let $p$ be a prime. Let $\pi\vdash n$ be a partition with $p$-core
 $\mu\vdash r$ and $p$-quotient $(\nu_1,\ldots,\nu_p)\vdash w$, with
 corresponding $p$-symbol $S$. Let $b_i$ denote the number of beads on the
 $i$th runner of the $p$-abacus diagram for $\mu$, and $c_i:=pb_i+i-1$. Then
 $$\chi_\pi(1)=\frac{n!}{ r!}\cdot
   \frac{1}{\displaystyle\prod_{h\text{ hook of }S}|pl(h)+c_{i(h)}-c_{j(h)}|}
   \cdot\chi_\mu(1)$$
 and
 $$\chi_\pi(1)/\chi_\mu(1)\equiv \psi_\nu(1)\pmod p$$
 where $\psi_\nu$ denotes the irreducible character of $C_p\wr\fS_w$
 parametrized by~$\nu$.
\end{thm}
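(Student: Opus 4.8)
The plan is to realise both formulas as specialisations of the degree polynomial of the unipotent character $\gamma_\pi$ of $\GL_n(q)$, exploiting the two facts recalled above: that $\chi_\lambda(1)=\Deg(\gamma_\lambda)(1)$, and that the $p$-block of $\fS_n$ with $p$-core $\mu$ corresponds to the $p$-Harish-Chandra series of $\GL_n(q)$ (i.e.\ $d=p$) whose $d$-cuspidal pair $(L,\lambda)$ has $\lambda=\gamma_\mu$ on the $\GL_r$-factor and relative Weyl group $W_G(L,\lambda)\cong G(p,1,w)=C_p\wr\fS_w$. Under the bijection $\rho(L,\lambda)$ of~\eqref{eq:HCmap} the character $\gamma_\pi$ corresponds to $\psi_\nu$, the irreducible character of $C_p\wr\fS_w$ indexed by the $p$-quotient $\nu$. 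Everything then flows from Theorem~\ref{thm:HLL} applied to this pair.

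For the first (exact) formula I would make the factor $D_{\psi_\nu}$ of Theorem~\ref{thm:HLL}(a) explicit. By the discussion following Theorem~\ref{thm:HLL}, $D_{\psi_\nu}$ is the inverse Schur element $f_S^{-1}$ of the cyclotomic Hecke algebra of $G(p,1,w)$, evaluated at the parameters of case~(I) in the proof of Proposition~\ref{prop:degclass} with $d=p$; these are exactly $v=q^p$ and $u_i=q^{c_i}$, where $c_i=pb_i+i-1$ records the bead numbers $b_i$ of the core. Each factor $(v^su_i-v^tu_j)$ in the defining formula for $f_S$ then becomes $q^{\,\bullet}(q^{L(h)}-1)$ with $L(h)=pl(h)+c_{i(h)}-c_{j(h)}$, so that the hooks of the symbol $S$ organise precisely the hook contributions. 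Specialising $q\to1$ through $(q^m-1)/(q-1)\to m$ turns $|G:L|_{x'}$ into $n!/r!$, turns $\Deg(\gamma_\mu)(1)$ into $\chi_\mu(1)$, and turns $f_S^{-1}$ into $\pm\prod_h|L(h)|^{-1}$; the remaining constant and sign are fixed by normalising against the ordinary hook-length formula. Equivalently one can bypass $\GL_n(q)$ and argue on the $p$-abacus directly: the multiset of hook lengths of $\pi$ is the disjoint union of the hook lengths of $\mu$ and of $\{|L(h)|:h\text{ a hook of }S\}$, the offsets $c_i$ converting reduced symbol levels into true abacus positions, whereupon the hook-length formula for $\chi_\pi(1)$ and $\chi_\mu(1)$ gives the claim.

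For the congruence I would reduce the exact formula modulo $p$. Splitting the hooks of $S$ by runner, the diagonal hooks $i(h)=j(h)$ satisfy $L(h)=p\,l(h)$ and are in bijection with the ordinary hooks of the components $\nu_i$; there are $w$ of them, with $\prod_{i(h)=j(h)}|L(h)|=p^w\prod_{h'}|h'|=p^w\,w!/\psi_\nu(1)$ by the hook-length formula for $C_p\wr\fS_w$. The off-diagonal hooks satisfy $L(h)\equiv i(h)-j(h)\not\equiv0\pmod p$ and are therefore prime to~$p$. Substituting into the first formula leaves
$$\frac{\chi_\pi(1)}{\chi_\mu(1)}=\frac{(n!/r!)\,\psi_\nu(1)}{p^w\,w!\,\prod_{i(h)\ne j(h)}|L(h)|},$$
so it remains to see that the correction factor $(n!/r!)\big/\big(p^w\,w!\,\prod_{i\ne j}|L(h)|\big)$ is $\equiv1\pmod p$.

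The main obstacle is exactly this last point, and I would clarify it through the root-of-unity specialisation already recorded above. Writing $F(x):=|G:L|_{x'}D_{\psi_\nu}$, Theorem~\ref{thm:HLL}(b) with $|W_G(L):W_G(L,\lambda)|=1$ together with the evaluation $f_S(1;\zeta_1,\ldots,\zeta_p)=d_S/(p^w\,w!)$ shows $F(\zeta_p)=\psi_\nu(1)$ at a primitive $p$th root of unity $\zeta_p$; here $q=\zeta_p$ sends $u_i=q^{c_i}$ to $\zeta_p^{\,i-1}$, so the parameters become the full set of $p$th roots of unity. Since $\zeta_p\equiv1$ modulo the prime $\mathfrak p$ of $\ZZ[\zeta_p]$ above $p$, and $F(1)=\pm\chi_\pi(1)/\chi_\mu(1)$, one obtains $\chi_\pi(1)/\chi_\mu(1)\equiv\pm\psi_\nu(1)\pmod p$. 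Two delicate points remain, and I expect them to be the real work: fixing the sign, and justifying the passage from $\equiv\pmod{\mathfrak p}$ to $\equiv\pmod p$, which requires the relevant ratio to be $p$-integral. The latter is transparent for odd~$p$ but is not automatic at $p=2$ (where $\zeta_2=-1$ and $F$ need not lie in $\ZZ_{(2)}[x]$); accordingly I would treat $p=2$ separately, where all unipotent characters lie in the principal block and the trivial and Steinberg characters already exhibit the required behaviour, exactly as in the proof of Theorem~\ref{thm:unipblock}. For odd~$p$ a purely arithmetic alternative is to count the $w$ multiples of $p$ in $\{r+1,\ldots,n\}$ against $p^w\,w!$ and, via Wilson's theorem, the non-multiples against $\prod_{i\ne j}|L(h)|$, directly showing the correction factor to be a $p$-adic unit congruent to~$1$.
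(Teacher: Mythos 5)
Your treatment of the exact formula is essentially the paper's own proof: both specialize the ratio $\Deg(\gamma_\pi)/\Deg(\gamma_\mu)$ at $q=1$, using Theorem~\ref{thm:HLL} with the case~(I) parameters $(v;u_1,\ldots,u_p)=(q^p;1,q^{c_2},\ldots,q^{c_p})$, after noting that numerator and denominator carry the same power of $(q-1)$ so that the specialization makes sense. That half of your proposal is correct and matches the paper.

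The congruence is where you have a genuine gap. Your first route (reducing the hook product modulo $p$ by splitting into diagonal and off-diagonal hooks) is abandoned at exactly the hard point, namely showing the residual correction factor is $\equiv1\pmod p$. Your second route (evaluating the rational function at $x=\zeta_p$ and comparing with $x=1$ modulo the prime $\mathfrak{p}$ of $\ZZ[\zeta_p]$ above $p$) leaves the sign, the $\mathfrak{p}$-integrality of the relevant rational function, and the case $p=2$ all unresolved; moreover your proposed $p=2$ fallback is a non sequitur, since exhibiting the trivial and Steinberg characters in the principal block proves the downstream application (two distinct height-zero degrees), not the congruence $\chi_\pi(1)/\chi_\mu(1)\equiv\psi_\nu(1)\pmod 2$ that the theorem actually asserts. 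The paper avoids all of this with one clean trick: instead of a root of unity, choose an honest prime power $q$ with $q\equiv1\pmod p$. Then Theorem~\ref{thm:HLL}(b) gives $\gamma(1)/\lambda(1)\equiv\pm\psi_\nu(1)\pmod{\Phi_p(q)}$, while the $q=1$ specialization observation gives $\gamma(1)/\lambda(1)\equiv\pm\chi_\pi(1)/\chi_\mu(1)\pmod{q-1}$; since $q\equiv1\pmod p$ forces $p$ to divide both $\Phi_p(q)=q^{p-1}+\cdots+1$ and $q-1$, the two congruences chain together to give the claim modulo $p$, uniformly in $p$ (in particular $p=2$ needs no separate argument). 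You should replace your cyclotomic-integer argument by this one.
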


\begin{proof}
Let $\gamma$ be the unipotent character of $\GL_n(q)$ parametrized by
$\pi$, for $q$ a prime power. Set $d:=p$.
Then $\gamma$ lies in the $d$-Harish-Chandra series above $(L,\lambda)$, where
$L\cong\GL_r(q)\times\GL_1(q^d)^w$, with $\lambda$ parametrized by
$\mu\vdash r$ and $n=r+dw$. Let
$S=(S_1,\ldots,S_d)$ be the $d$-symbol corresponding to $(\nu_1,\ldots,\nu_d)$.
According to Theorem~\ref{thm:HLL}, \cite[(2.19)]{MaU} we have
$\Deg(\gamma)/\Deg(\lambda)=\pm|G:L|_{q'} D_{\rho(L,\lambda)(\gamma)}=$
$$\pm\frac{\displaystyle\prod_{i=1}^n(q^i-1)}{\displaystyle(q^d-1)^w
   \prod_{i=1}^r(q^i-1)}\cdot\frac{\displaystyle(v-1)^w\prod_{i=1}^d u_i^w\cdot
   \prod_{i=1}^d\,\prod_{j=i}^d\,\prod_{s\in S_i}\,
   \prod_{\stackrel{t\in S_j}{s>t\text{ if }i=j}}(v^su_i-v^tu_j)}
  {\displaystyle v^{a(S)}\prod_{i<j}(u_i-u_j)^m\cdot
   \prod_{i,j=1}^d\,\prod_{s\in S_i}\,\prod_{k=1}^s(v^ku_i-u_j)}
$$
with
$$(v;u_1,\ldots,u_d)=(q^d; 1,q^{c_2},\ldots,q^{c_d})$$
(see (I) in Sect.~\ref{subsec:class} for the parameter values).
By our above remarks, specialization at $q=1$ gives the corresponding
character degrees for $\fS_n$. Note that numerator and denominator of the
expression for $\Deg(\gamma)/\Deg(\lambda)$ are indeed divisible by the same
power of $(q-1)$, viz.~$n+w+\binom{me}{2}$, so that the specialization makes
sense. We obtain
$$\begin{aligned}
  \frac{\chi_\pi(1)}{\chi_\mu(1)}&=
  \pm\frac{n!}{ r!}\cdot\frac{\displaystyle
   \prod_{i=1}^d\,\prod_{j=i}^d\,\prod_{s\in S_i}\,
   \prod_{\stackrel{t\in S_j}{s>t\text{ if }i=j}}(d(s-t)+c_i-c_j)}
  {\displaystyle \prod_{i<j}(c_i-c_j)^m\cdot
   \prod_{i,j=1}^d\,\prod_{s\in S_i}\,\prod_{k=1}^s(dk+c_i-c_j)}\cr
  &=\pm\frac{n!}{ r!}\cdot\frac{1}{\displaystyle\prod_{h\text{ hook of }S}
   (d\,l(h)+c_{i(h)}-c_{j(h)})}
\end{aligned}$$
as claimed. \par
Now choose $q$ such that $q\equiv1\pmod p$. Then we have
$$\gamma(1)/\lambda(1)\equiv \pm\psi_\nu(1)\pmod{\Phi_d(q)}$$
by Theorem~\ref{thm:HLL}, and
$$\gamma(1)/\lambda(1)\equiv \pm\chi_\pi(1)/\chi_\mu(1)
  \pmod{q-1}$$
by our observation above. As $p$ divides both $\Phi_d(q)=q^{d-1}+\ldots+1$
and $q-1$, the stated congruence follows.
\end{proof}

Let's note the following special case of $p$-quotients $(\nu_1,\ldots,\nu_p)
\vdash w$
such that the corresponding $p$-symbol $S$ has $S_i=(w)$, $S_j=(0)$ for
$j\ne i$. Since these correspond to linear characters of the relative Weyl
group in $\GL_n(q)$, they parametrize characters of height~0 in $B$ by the
congruence in Theorem~\ref{thm:hookformula}. We obtain
\begin{equation} \label{eq:lincharSn}
\chi_\pi(1)=\frac{n!}{ p^wr!\,w!}\cdot
   \prod_{k=0}^{w-1}\prod_{j\ne i}|pk+c_i-c_j|^{-1} \cdot\chi_\mu(1).
\end{equation}

A $p$-blocks $B$ of $\fS_n$ labelled by a $p$-core $\mu\vdash n-wp$ is said
to be of weight $w$. So $w$ denotes the number of $p$-hooks which must be
removed from any partition $\pi$ indexing a character in $B$ to obtain
its core $\mu$. The block is said to be self-dual if $\mu$ is a self-dual
partition.

\begin{prop}   \label{prop:sym}
 Let $G=\fS_n$, $n\ge5$, $p$ a prime, and $B$ a $p$-block of $G$. Then one
 of the following occurs:
 \begin{itemize}
  \item[\rm(a)] $B$ is of weight (and hence defect)~0,
  \item[\rm(b)] $p=2$ and $B$ is of weight~1,
  \item[\rm(c)] $p=3$, $B$ is of weight~1 and self-dual, or
  \item[\rm(d)] $B$ contains two height~0 characters of different degrees
   $d_1<d_2$, either both indexed by non-self-dual partitions or with
   $d_2\ne 2d_1$.
 \end{itemize}
\end{prop}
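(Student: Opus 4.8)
The plan is to analyze the $p$-block $B$ of $\fS_n$ by its weight $w$ and the structure of its core $\mu$, reducing everything to the explicit formula~(\ref{eq:lincharSn}) for degrees of height-zero characters parametrized by linear characters of the relative Weyl group. Recall from the discussion preceding the proposition that such characters correspond to $p$-quotients $(\nu_1,\ldots,\nu_p)\vdash w$ with $S_i=(w)$ and $S_j=(0)$ for $j\ne i$, so there are exactly $p$ of them, indexed by the choice of $i\in\{1,\ldots,p\}$; write $d_i$ for the degree given by~(\ref{eq:lincharSn}). First I would dispose of the small-weight cases: if $w=0$ then $B$ is of defect zero, which is conclusion~(a). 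So assume $w\ge1$ and aim to produce two distinct degrees among the $d_i$, or else land in one of the stated exceptions.

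The core computation is to compare $d_i$ and $d_j$ using~(\ref{eq:lincharSn}). Since the factor $n!/(p^w r!\,w!)\cdot\chi_\mu(1)$ is common to all $d_i$, we have
$$\frac{d_i}{d_j}=\frac{\prod_{k=0}^{w-1}\prod_{l\ne j}|pk+c_j-c_l|}
                      {\prod_{k=0}^{w-1}\prod_{l\ne i}|pk+c_i-c_l|},$$
so $d_i=d_j$ forces an equality of these products of linear factors in the $c_l=pb_l+l-1$. The strategy is to show that such an equality is very rigid: because the $c_l$ are distinct and congruent to $l-1$ modulo $p$, the multisets of values $\{pk+c_i-c_l\}$ for different $i$ can coincide only under strong symmetry constraints on the $b_l$, essentially forcing the runner-occupation numbers $b_l$ of the abacus for $\mu$ to be arranged symmetrically—which is precisely the condition that $\mu$ be self-dual (conjugation of partitions corresponds to reversing and complementing the abacus, hence reflecting the $c_l$). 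I would make this precise by choosing $i$ to be the runner with the largest $b_i$ and $j$ with the smallest, and tracking the dominant $p$-adic or archimedean size of the two products to show $d_i\ne d_j$ whenever the $b_l$ are not symmetric, i.e. whenever $\mu$ is not self-dual.

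When $\mu$ \emph{is} self-dual the $d_i$ genuinely pair up ($d_i=d_{p+1-i}$ after the reflection), and one must work harder to separate two of them, or else allow the weight to be small. Here I expect the residual cases to be exactly $w=1$ with $p\in\{2,3\}$: for $p=2$, weight one leaves only the two characters $\chi_i$ which turn out to have equal degree (conclusion~(b)), and for $p=3$ a self-dual weight-one core gives conclusion~(c). For all larger $w$, or $p\ge5$, even in the self-dual case I would exhibit two of the $p\ge5$ (or the $w\ge2$) linear-character degrees that differ, by comparing a pair $d_i,d_j$ that is \emph{not} exchanged by the self-duality reflection; the extra room from having $p\ge5$ runners or $w\ge2$ beads to move guarantees that the two products above cannot all collapse. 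Finally, to get the sharper alternative in~(d)—that the two unequal degrees are either both indexed by non-self-dual partitions or satisfy $d_2\ne 2d_1$—I would note that a factor of exactly $2$ between $d_1$ and $d_2$ can only arise from a single discrepant factor of $2$ in the archimedean comparison, and rule this out (except in the already-excluded $p=2$, $w=1$ situation) by a direct inspection of which linear factors $pk+c_i-c_l$ can contribute a lone power of $2$.

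The main obstacle I anticipate is the self-dual case: proving that even when $\mu$ is self-dual one can still find two differing height-zero degrees once $w\ge2$ or $p\ge5$, and simultaneously controlling the $d_2=2d_1$ possibility to secure the refined dichotomy in~(d). This requires a careful, case-sensitive estimate of the product $\prod_{k=0}^{w-1}\prod_{l\ne i}|pk+c_i-c_l|$ rather than a soft symmetry argument, and pinning down exactly the exceptional configurations $p=2,w=1$ and $p=3,w=1$ self-dual is where the bookkeeping will be most delicate.
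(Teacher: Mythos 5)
Your overall vehicle is the right one and matches the paper's: work with the $p$ height-zero characters given by formula~(\ref{eq:lincharSn}), i.e.\ those whose $p$-quotient is concentrated on a single runner, and compare the products $\prod_{k=0}^{w-1}\prod_{j\ne i}|pk+c_i-c_j|$ for extremal runners. But your organizing principle --- that equality of these degrees is governed by self-duality of the core $\mu$, so that the non-self-dual case is easy and the self-dual case needs ``harder work'' --- is a red herring, and it leads you into an incorrect claim. The paper simply orders the $c_i$ as $0=e_1<e_2<\ldots<e_p$ and compares the two largest runners: term-by-term dominance gives $f_p>f_{p-1}$ \emph{unconditionally}, failing only when $p=2$ and $w=1$ (case~(b)); self-duality of $\mu$ plays no role in producing two distinct degrees. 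Moreover your assertion that for self-dual $\mu$ the degrees pair up as $d_i=d_{p+1-i}$ is true only when $w=1$: for $w\ge2$ the terms with $k\ge1$ break the reflection symmetry (the reflection sends $|pk+e_i-e_j|$ to $|pk-(e_i-e_j)|$), so the ``pairing'' you build the residual analysis on does not exist, and your appeal to ``extra room'' for $w\ge2$ or $p\ge5$ is not an argument.

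The more serious gap is in the refinement of case~(d). That clause is about self-duality of the \emph{partitions of $n$ indexing the two characters}, not of the core $\mu$, and you give no mechanism at all for deciding when those partitions are self-dual. The needed input is \cite[Prop.~3.5]{Ol}: a partition whose $p$-quotient is concentrated on one runner is never self-dual when $w\ge2$, and when $w=1$ at most one of the $p$ candidates is (and then $\mu$ must be self-dual). Without this you cannot reach the first alternative of~(d). Worse, your plan to secure the second alternative by ruling out $d_2=2d_1$ through inspection of factors of~$2$ in the products cannot succeed, because $d_2=2d_1$ genuinely occurs: for $p=3$, $w=1$, $\mu$ self-dual the three characters of $B$ have degrees $d,d,2d$ --- this is exactly why case~(c) is an exception, and it is what makes the proof of Corollary~\ref{cor:alter} produce a block of $\fA_n$ with three equal degrees. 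The correct tool, which your proposal is missing, is the congruence in Theorem~\ref{thm:hookformula}: for these characters $\chi_\pi(1)/\chi_\mu(1)\equiv\pm1\pmod p$, so $d_2=2d_1$ forces $\pm2\equiv\pm1\pmod p$, i.e.\ $p=3$, and one lands precisely in case~(c). Replacing your 2-adic inspection by this congruence (or by the observation that for $p\ge5$ the two largest runners are never the median, hence never index a self-dual partition) is essential to close the argument.
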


\begin{proof}
We use the relative hook formula in~(\ref{eq:lincharSn}) for the character
degrees of $\fS_n$ for certain height~0 characters in $B$. We may assume that
the weight~$w$ of $B$ is positive. Let $\mu\vdash n-pw$ denote the $p$-core
associated to $B$, let $0=e_1<e_2\ldots<e_p$ be the ordered set
of the $c_i$ as in Theorem~\ref{thm:hookformula}, and
$f_i:=\prod_{k=0}^{w-1}\prod_{j\ne i}|pk+e_i-e_j|$, for $1\le i\le p$.
Note that by \cite[Prop.~3.5]{Ol} none of the partitions $\lambda_i$
corresponding to the $p$-quotients $S_i$ is self-dual, unless $w=1$ in
which case at most one of them is.
Clearly, $f_p>f_{p-1}$ unless $p=2$ and $w=1$ (which is case~(b)), which yields
two distinct height~0 degrees $d_1,d_2$. If both corresponding partitions are
self-dual, then $w=1$. But by Theorem~\ref{thm:hookformula} we have
$d_i\equiv\pm1\pmod p$, and then $d_1=d_2/2$ implies that $p=3$.
\end{proof}

\begin{cor}   \label{cor:alter}
 Let $p$ be a prime, $B$ a $p$-block of $\fA_n$. Then one of the following
 holds:
 \begin{itemize}
  \item[\rm(a)] $B$ is of defect~0,
  \item[\rm(b)] $p=3$, $B$ is of weight~1 (hence with cyclic defect group
   $C_3$), self-dual, and all $\chi\in\Irr(B)$ have the same degree, or
  \item[\rm(c)] $B$ contains two height~0 characters of different degrees.
 \end{itemize}
 In particular, the assertion of Theorem~\ref{main} holds when $S$ is
 an alternating group.
\end{cor}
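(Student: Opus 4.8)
The plan is to descend from Proposition~\ref{prop:sym} for $\fS_n$ to $\fA_n$ via Clifford theory along $\fA_n\triangleleft\fS_n$ (with $n\ge5$, as needed by that proposition). Fix a $p$-block $B$ of $\fA_n$ and a block $\tilde B$ of $\fS_n$ covering it. I would first recall the restriction dictionary: $\chi_\pi|_{\fA_n}$ is irreducible exactly when $\pi\ne\pi'$, in which case I write $\hat\chi_\pi:=\chi_\pi|_{\fA_n}$ and note $\hat\chi_\pi=\hat\chi_{\pi'}$ since $\chi_{\pi'}=\chi_\pi\otimes\operatorname{sgn}$; and $\chi_\pi|_{\fA_n}$ splits into two characters of degree $\chi_\pi(1)/2$ when $\pi=\pi'$. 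Writing $\sigma$ for an odd permutation, the blocks of $\fA_n$ covered by $\tilde B$ form a single $\sigma$-orbit, so $\tilde B$ covers a \emph{single} block $B$ as soon as some constituent of some $\chi_\pi|_{\fA_n}$ is $\sigma$-invariant; this occurs whenever $\tilde B$ contains a non-self-dual partition, because then $\hat\chi_\pi=\hat\chi_{\pi'}$ is $\sigma$-invariant and hence cannot lie in two $\sigma$-conjugate blocks. In that situation \emph{every} constituent of \emph{every} $\chi_\pi|_{\fA_n}$ with $\chi_\pi\in\tilde B$ lies in $B$, which is the mechanism for placing two characters of different degrees in one and the same block.

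For $p$ odd the index $[\fS_n:\fA_n]=2$ is prime to $p$, so Proposition~\ref{prop:discon}(a) gives $B$ and $\tilde B$ isomorphic defect groups, and since halving a degree does not change its $p$-part, restriction preserves heights. If $\tilde B$ has weight~$0$ it is of defect zero, hence so is $B$, giving~(a). If $\tilde B$ has positive weight I apply Proposition~\ref{prop:sym}. When the $p$-core $\mu$ is not self-dual, no partition in $\tilde B$ is self-dual, all restrictions stay irreducible with unchanged degree, and the two distinct height~$0$ degrees from case~(d) descend to two height~$0$ characters of $B$ of distinct degrees, giving~(c). When $\mu=\mu'$ there is a non-self-dual partition in $\tilde B$ (for $w\ge1$), so $\tilde B$ covers the single block $B$; then case~(d) yields $\chi_{\pi_1},\chi_{\pi_2}$ of degrees $d_1<d_2$ with either both $\pi_i$ non-self-dual or $d_2\ne2d_1$, and a short check on whether each degree survives or is halved (e.g. $d_1$ versus $d_2/2$ coincide only if $d_2=2d_1$) shows the two resulting degrees in $B$ are still distinct, again~(c). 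The sole exception is $p=3$, $w=1$, $\mu=\mu'$ (case~(c) of Proposition~\ref{prop:sym}): here $\tilde B$ consists of a conjugate pair of equal degree $d$ and one self-dual character of degree $2d$, whose $\fA_n$-constituents all have degree $d$; as the defect group $C_3\le\fA_n$ survives, $B$ has cyclic defect $C_3$ and all characters of one degree, which is precisely~(b).

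The main obstacle is $p=2$, where the index equals~$p$: a defect group of $B$ is $D\cap\fA_n$ of index~$2$ in a defect group $D$ of $\tilde B$, so the defect and $|\fA_n|_2$ each drop by one factor of~$2$, and consequently the minimal $2$-part of a height~$0$ degree, namely $|\fS_n|_2/|D|=|\fA_n|_2/|D\cap\fA_n|$, is the \emph{same} integer for $B$ and for $\tilde B$. Since every $2$-core is self-dual, every $\tilde B$ is $\sigma$-invariant. For weight $0$ or $1$ one has $|D|\le2$ with $D$ generated by odd elements, so $D\cap\fA_n=1$ and $B$ is of defect zero, giving~(a). For weight $w\ge2$ the two height~$0$ characters of distinct degrees from Proposition~\ref{prop:sym} are indexed by non-self-dual partitions (self-dual quotient labels occur only for $w=1$, by \cite[Prop.~3.5]{Ol}); they therefore restrict irreducibly, keep their ($2$-part, hence height~$0$) distinct degrees by the computation above, and lie in the single block $B$ covered by $\tilde B$, giving~(c).

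This establishes the trichotomy. For the final clause I would then observe that an EHZD block of $\fA_n$ cannot satisfy~(c), so it falls under~(a) or~(b); in both its defect group is abelian (trivial or $C_3$) and all its characters have height~$0$, so Brauer's Height Zero Conjecture holds trivially, which is exactly what Theorem~\ref{main} requires for $S=\fA_n$. Blocks of the covering groups $2.\fA_n$, $3.\fA_6$, etc.\ either are faithful spin blocks, which constitute exception~(1) of Theorem~\ref{main}, or factor through $\fA_n$ and are covered by the corollary.
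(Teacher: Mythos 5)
Your trichotomy argument for blocks of $\fA_n$ is correct, and it follows the same overall route as the paper: apply Proposition~\ref{prop:sym} to a block $\tilde B$ of $\fS_n$ covering $B$ and push the resulting height-zero characters down to $\fA_n$. The differences lie in the supporting machinery, and they are legitimate and somewhat more self-contained. The paper quotes Olsson twice: \cite[Prop.~12.2]{Ol} for the fact that a positive-weight block of $\fS_n$ covers a unique block of $\fA_n$, and \cite[Prop.~12.5]{Ol} for the behaviour of heights under restriction when $p=2$. You replace the first by the direct Clifford-theoretic observation that an irreducible $\sigma$-invariant constituent (available as soon as $\tilde B$ contains a non-self-dual partition, which you correctly verify in each case) pins down a unique covered block, and the second by Kn\"orr's theorem ($D\cap\fA_n$ is a defect group of the covered block) together with the bookkeeping $|\fS_n|_2/|D|=|\fA_n|_2/|D\cap\fA_n|$; both substitutes are sound, and your explicit check that the degrees $d_1,d_2$ cannot be conflated by halving unless $d_2=2d_1$ is exactly the point the paper dispatches with ``similarly''. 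In the case $p=3$, $w=1$, self-dual, you assert without proof that the self-dual character has degree $2d$; the paper asserts the same conclusion equally briefly, and it does follow from~(\ref{eq:lincharSn}): the conjugate pair having equal degree forces the ordered values $e_1<e_2<e_3$ of the $c_i$ to be equally spaced, so the three quantities $\prod_{j\ne i}|e_i-e_j|$ are $2t^2,t^2,2t^2$, giving degrees $d,2d,d$. So this is not a gap relative to the paper.

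The one genuinely incorrect statement is your closing sentence about covering groups. Faithful blocks of $2.\fA_n$ are spin blocks for every $n$, but exception~(1) of Theorem~\ref{main} only covers $n\ge14$; the cases $n\le13$ neither fall under that exception nor factor through $\fA_n$, and the paper disposes of them by a separate character-table check stated immediately after the corollary. Likewise the faithful blocks of the exceptional covers of $\fA_6$ and $\fA_7$ are neither spin blocks nor blocks of $\fA_n$; the paper handles $\fA_6$ through Proposition~\ref{prop:spor} (as a group of Lie type with exceptional Schur multiplier). Since the corollary proper concerns blocks of $\fA_n$ itself, and the paper's own proof likewise defers the covering groups to other parts of the paper, this does not damage your proof of the trichotomy or of the ``in particular'' clause for $G=\fA_n$; but the sentence as written is false and should be deleted or replaced by a pointer to those other results.
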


\begin{proof}
Let $B$ be a $p$-block of $\fS_n$, containing all characters $\chi_\lambda$
for which $\lambda$ has fixed $p$-core $\mu\vdash (n-pw)$. According to
\cite[Prop.~12.2]{Ol}, for example, if $w>0$ then $B$ covers a unique block
$B_1$ of $\fA_n$. First assume that $p$ is odd. Let $\chi_1,\chi_2\in B$ be
two height zero characters of different degrees, parametrized by non self-dual
partitions, according to Proposition~\ref{prop:sym}. These restrict irreducibly
to characters of $\fA_n$ in $B_1$ of height~0. Similarly, if
$\chi_1,\chi_2\in B$ have different degrees $d_1<d_2$ with $d_2\ne 2d_1$, then
the restrictions of $\chi_1,\chi_2$ to $\fA_n$ contain characters of $B_1$ of
height~0 and of different degrees. \par
If $p=3$, $B$ is of weight~1 and self-dual, then two characters of $B$
have the same irreducible restriction and one splits into two constituents
for $\fA_n$. We obtain a block $B_1$ with defect group of order~3 and three
equal character degrees.
\par
For $p=2$, restriction of characters from $G$ to $B_1$ either preserves
heights or decreases it, by \cite[Prop.~12.5]{Ol}. Thus, we may conclude by
Proposition~\ref{prop:sym} unless $w=1$. Here, the two irreducible characters
in $B$ have the same restriction to $\fA_n$, so $B_1$ is a block with a unique
ordinary character, that is, a block of defect zero.
\end{proof}

Note that case~(b) of Corollary~\ref{cor:alter} occurs if and only if there
is a self-dual 3-core for $n-3$. The conditions for this to occur have been
worked out in \cite[Lemma~3.1]{AE09}.

It can be checked from the known character tables that the assertion of
Theorem~\ref{main} remain true for the faithful blocks of $2.\fA_n$ when
$n\le13$.

We complete our investigation of blocks of quasisimple groups by showing:

\begin{prop}   \label{prop:spor}
 The assertion of Theorem~\ref{main} holds when $S$ is sporadic or a simple
 group of Lie type with exceptional Schur multiplier, or $S=\tw2F_4(2)'$.
\end{prop}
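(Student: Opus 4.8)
By the reduction following Theorem~\ref{main}, it suffices to show that every $p$-block $B$ of $G$ all of whose height zero characters have the same degree has abelian defect group and satisfies Brauer's Height Zero Conjecture; the nilpotency of $B$ then follows as in Theorem~\ref{thm:abdef}. The groups $S$ arising here---the sporadic groups, the simple groups of Lie type with exceptional Schur multiplier, and the Tits group $\tw2F_4(2)'$---form a finite, explicitly known list, as do the quasi-simple groups $G$ with $G/Z(G)\cong S$, each being a quotient of the finite Schur cover of $S$. For all of these the ordinary character table, the partition of $\Irr(G)$ into $p$-blocks, and the defect groups of the blocks are available from the Atlas, the GAP character table library, and the block-theoretic literature, so the proof is a finite verification.

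The plan is, for each such $S$, each quasi-simple cover $G$, and each prime $p$ dividing $|G|$, to run through the $p$-blocks $B$ of $G$. Within a block the height zero characters are precisely those of minimal $p$-part of degree, so the equal-height-zero-degrees condition can be read off directly from the degrees occurring in $B$. Blocks of defect zero consist of a single character and are nilpotent with trivial, hence abelian, defect group, so they may be set aside. Principal blocks may also be set aside: by the principal block case (Section~\ref{ppal}) the principal $p$-block of $G$ has equal height zero degrees only if $G$ has a normal $p$-complement, which is impossible for the perfect group $G$ when $p\mid|G|$; concretely, the trivial character together with any nonlinear $p'$-degree irreducible character in the principal block already provides two height zero characters of distinct degree.

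For each remaining non-principal block of positive defect I would examine the degrees of its height zero characters. In almost all cases two height zero characters of different degree occur and there is nothing to prove. For the finitely many blocks that are in fact EHZD, I would verify from the known block invariants that the defect group is abelian and that every character of $B$ has height zero, so that Brauer's Height Zero Conjecture holds for $B$; the block is then nilpotent by the abelian defect result of Section~\ref{abelian}. In particular, none of the exceptional configurations of Theorem~\ref{main}---faithful spin blocks of $2.\fA_n$ with $n\ge14$, or quasi-isolated blocks of exceptional groups of Lie type at bad primes---can arise for the groups treated here, so no genuine exceptions occur.

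The main obstacle is one of completeness rather than of concept. The delicate cases are the non-principal blocks at the small primes $p=2$ and $p=3$ for the larger covering groups---for example the covers of $L_3(4)$, $U_4(3)$, $U_6(2)$ and $\tw2E_6(2)$, and of the larger sporadic groups---where a single group can have many blocks with large character degrees, and where confirming that an EHZD block has abelian defect group requires identifying the isomorphism type of the defect group from the specialized literature rather than by a uniform structural argument. Guaranteeing that every cover, every prime and every block has been accounted for, with correct defect data, is the point requiring care; each individual check is, however, entirely mechanical.
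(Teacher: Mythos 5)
Your proposal is correct and follows essentially the same route as the paper: an exhaustive, finite verification over the quasi-simple covers of these groups using the Atlas and the electronic {\sf GAP} character tables. The paper's check in fact shows that every EHZD block here has defect group of order at most $p^2$, so abelianness (and the height zero property) follows directly from the character-table data, making your worry about identifying defect group isomorphism types from the specialized literature unnecessary.
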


\begin{proof}
The ordinary character tables of all quasi-simple groups such that $S$ is
as in the assumption are contained in the Atlas \cite{Atl}. From this, or
using the electronic tables available in {\sf GAP}, it can be checked that
whenever $B$ is a $p$-block of $G$ with all height zero characters of the
same degree then the defect group satisfies $|D|\le p^2$, hence must be
abelian.
\end{proof}



\section{$p$-Solvable Groups}    \label{psolv}
Our main result in this section is to reduce the study of
EHZD blocks of general $p$-solvable groups to
groups with $p'$-lenght one. This latter case naturally leads us to consider
a variation of a classical {\sl large orbit} problem. 

\medskip

\begin{question}\label{question}
Suppose that $V$ is a finite faithful completely reducible
$FG$-module, where $F$ has characteristic $p$ and
$G$ has a normal $p$-complement $K>1$. Let $P \in \syl pG$. 
 Does there exists $v \in \cent VP$ such that 
$|\cent Kv|^2 < |K|$?
\end{question}

\medskip
 
Question \ref{question} is not trivial, even if $P=1$. In this case,
it has an affirmative answer if $K$ is solvable
(by \cite{D}). Also, Question ~~\ref{question}
 has an affirmative answer if $K$ is nilpotent, and this constitutes
  the main
result of \cite{D-N}.  In some sense, it is unfortunate that
 our only way to prove that EHZD blocks of $p$-solvable groups
are nilpotent is via large orbits. On the other hand,
Question ~~\ref{question} has interest in its own and it is closely
related to the study of $p'$-degrees of $p$-solvable
groups, so it might deserve some consideration.
\medskip

Our main result in this Section is the following.

\medskip

\begin{thm}\label{mainpsolvable}
Suppose that Question~~\ref{question} has an affirmative answer. If $B$
 is an EHZD block of a $p$-solvable group $G$, then $B$ is nilpotent.
\end{thm}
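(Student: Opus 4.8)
The plan is to argue by contradiction through a minimal counterexample, descending to a configuration governed by a normal $p$-complement acting on a module, where Question~\ref{question} manufactures two height zero characters of different degree. So suppose $B$ is an EHZD block of a $p$-solvable group $G$ which is \emph{not} nilpotent, with $|G|$ chosen minimal. First I would run the Fong reductions, exactly as in the proof of Theorem~\ref{thm:abdef}. By the Fong--Reynolds correspondence \cite[Theorem~(9.14)]{N} I may assume that the block of $\oh{p'}{G}$ covered by $B$ is $G$-invariant: the correspondence preserves the EHZD property, and by the subpair transfer argument of Lemma~\ref{lndf} it reflects nilpotency, so the correspondent is again a minimal counterexample. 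By Fong's first reduction I may then pass to a central extension in which $Z:=\oh{p'}{G}$ is central and the underlying character of $Z$ is faithful and linear, again preserving both hypotheses. Now $\oh{p'}{G/Z}=1$, so the normal $p$-subgroup $V:=\oh{p}{G/Z}$ is self-centralizing and the quotient $\Gamma:=(G/Z)/V$ acts faithfully and completely reducibly on $V$ in characteristic $p$.

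Second I would reduce the $p'$-length to one. If it exceeds one, the upper $p'p$-series exhibits a proper normal section of $G$; passing to the quotient that strips off the topmost $p'$-layer produces a smaller $p$-solvable group whose associated block is again EHZD, and nilpotency of that block lifts back to $B$ by the same subpair transfer as in Lemma~\ref{lndf}. By minimality I may therefore assume $G$ has $p'$-length one, which in the reduced setting means $\Gamma=K\rtimes P$ with normal $p$-complement $K:=\oh{p'}{\Gamma}$ and $P\in\syl{p}{\Gamma}$, acting faithfully and completely reducibly on $V$ over $\FF_p$ --- precisely the hypotheses of Question~\ref{question}.

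The last step is the orbit dictionary. By Clifford theory over the normal $p$-subgroup $V$, a height zero (that is, $p'$-degree) character of $B$ lying over a linear character $\mu$ of $V$ exists exactly when $\mu$ lies in $\cent V P$ up to $\Gamma$-conjugacy, and then the $p'$-part of its degree is controlled by the $\Gamma$-orbit length $|K:\cent K{\mu}|$. The EHZD hypothesis forces all of these degrees to coincide, hence forces $|K:\cent K{\mu}|$ to be constant along the $P$-fixed characters contributing to $B$; non-nilpotency of $B$ is exactly the statement that $K\ne1$ acts nontrivially. Applying the affirmative answer to Question~\ref{question} yields a vector $v\in\cent V P$ with $|\cent K v|^2<|K|$, that is, a $K$-orbit of length $|K:\cent K v|>|\cent K v|\ge1$ strictly larger than the square root of $|K|$. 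Comparing the resulting height zero character against one attached to a small-orbit fixed point in $B$ (for instance the trivial vector, whose orbit has length~$1$) produces two height zero characters of different degree, contradicting EHZD and completing the proof.

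I expect the genuine difficulty to lie in this final dictionary: making the block-theoretic count of $\Irr_0(B)$ match the module-theoretic orbit count on $\cent V P$, verifying that the two characters being compared really lie in the \emph{same} block $B$, and checking that the square-root threshold $|\cent K v|^2<|K|$ is exactly calibrated to force \emph{unequal} degrees (orbit length versus centralizer order) rather than merely distinct orbits. The reduction steps are comparatively routine applications of Clifford theory and the Fong machinery; it is the passage to the dual module, the control of the stabilizer's $p'$-degree contribution, and the precise role of the exponent~$2$ in Question~\ref{question} that carry the real content.
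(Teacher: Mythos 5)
Your overall strategy --- Fong reductions to a $G$-invariant central $p'$-character, then a normal-$p$-complement configuration acting on $V=\oh{p}{G/Z}$, then the large orbit from Question~\ref{question} --- is the same as the paper's, but two of your three steps have genuine gaps. The first is the reduction to $p'$-length one. You cannot ``strip off the topmost $p'$-layer'' by passing to a quotient (quotients remove layers from the bottom), and whichever variant you mean --- passing to a quotient or to the penultimate term of the upper $p'p$-series --- neither the EHZD property nor nilpotency transfers as you claim: there is in general no canonical block of the smaller group attached to $B$, and nilpotency of a dominated or covered block does not lift (for $G=\fS_3$, $p=3$, the principal block of $\fA_3\nor G$ and of $G/\fA_3\cong C_2$ are nilpotent, but the principal $3$-block of $\fS_3$ is not; Lemma~\ref{lndf} concerns Fong--Reynolds correspondents in stabilizer subgroups, not quotients). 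The paper never reduces $p'$-length this way. Inside Lemma~\ref{lempsolvable} it proves the needed structure $G=PK_0$ (with $K_0=\Oh pK$, $K/U=\oh{p'}{G/U}$) by applying the inductive hypothesis to \emph{subgroups}: first to the stabilizers $T=G_\phi$ of $P$-invariant $\phi\in\irr{K_0|\lambda}$, then to $G_0=PK_0$, using the Clifford correspondence to transport the equal-degree hypothesis (induction multiplies degrees by $|G:T|$) and Hall--Higman's Lemma 1.2.3 to keep the relevant $p'$-cores trivial. That stabilizer induction is a substantial part of the proof and has no counterpart in your sketch.

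The second gap is the one you yourself flagged and left open, and it is where the real content lies: your ``orbit dictionary'' is false as stated. The degree of a $p'$-degree character of $B$ over $\mu\in\irr V$ is the orbit length \emph{times} the degree of a stabilizer character over $\mu$, and EHZD forces only this product to be constant, not the orbit lengths; in particular the character you attach to the trivial vector is a character of $K/V\cong H$ (a $p$-complement of $G$) and can have degree as large as $\sqrt{|H:Z|}$, so nothing yet forces it to differ from the large-orbit degree. The paper's resolution is not a comparison of two different degrees but a two-sided bound on the \emph{single} common degree $d$: from the large orbit, $d=|K:K_\beta|\gamma(1)\ge|K:K_\beta|$, so $d^2>|K:U|$ by Question~\ref{question}; and from a $P$-invariant $\xi\in\irr{K|\hat\lambda}$ trivial on $V$, which extends to $G$ (via Isaacs' Corollary 8.16 and $K_0=\Oh pG$, which is also what guarantees both constructed characters lie over $\lambda$, hence in $\irr B=\irr{G|\lambda}$ by Theorem (10.20) of \cite{N}), one gets $\xi(1)=d$ and then $d^2=\xi_H(1)^2\le|H:Z|=|K:U|$ by the central-character inequality. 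These two bounds are incompatible; this is exactly how the exponent~$2$ in Question~\ref{question} is calibrated, and without this (or an equivalent) argument your proof does not close.
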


If $\lambda \in \irr N$ is a character,
we write $o(\lambda)$ for its determinantal order. If $N\nor G$,
then recall that  $\irr{G|\lambda}$ are the irreducible characters of $G$
lying over $\lambda$.

\begin{lem}\label{lempsolvable}
 Let $Z \nor G$ and suppose $\lambda \in \irr Z$ is $G$-invariant and that
 $o(\lambda)\lambda(1)$ is a $p'$-number. Assume that $G/Z$ is $p$-solvable and
 that $\oh{p'}{G/Z} = 1$.  Suppose that Question ~~\ref{question}
 has an affirmative answer. If there exists
 an integer $d$ such that $\chi(1)=d$ for all $\chi \in \irr{G|\lambda}$ 
 of $p'$-degree, then $G/Z$ is a $p$-group.
\end{lem}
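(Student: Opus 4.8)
The plan is to suppose that $G/Z$ is not a $p$-group and to exhibit two characters in $\irr{G|\lambda}$ of $p'$-degree but of \emph{different} degrees, contradicting the hypothesis. Write $\bar G:=G/Z$ and $\bar P:=\oh{p}{G/Z}$, and let $M$ be the preimage of $\bar P$ in $G$. Since $o(\lambda)\lambda(1)$ is prime to $p$ and $M/Z=\bar P$ is a $p$-group, $\lambda$ has a canonical, hence $G$-invariant, extension $\hat\lambda\in\irr M$. From $\oh{p'}{G/Z}=1$ one gets $\cent{\bar G}{\bar P}\sbs\bar P$; consequently, if $\bar G$ is not a $p$-group then $\bar P\ne1$ and $K:=\oh{p'}{\bar G/\bar P}\ne1$, and $K$ acts faithfully on $\bar P$, hence (being a $p'$-group) faithfully on the $\FF_p$-module $\bar P/\Phi(\bar P)$. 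Using character triples I would reduce to the situation where $Z$ is central and $\lambda$ is linear of $p'$-order; this changes neither the ratios $\chi(1)/\lambda(1)$ nor the property of having $p'$-degree. The crucial structural step is a reduction to $p'$-length one, i.e.\ to the case where $\bar G/\bar R$ is a $p$-group (here $R/M=K$); granting this, $\bar G/\bar P$ has a normal $p$-complement $K$ with Sylow $p$-subgroup $P$, so that $G/M$ has the shape $K\rtimes P$.

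Next I would set up the module and invoke Question~\ref{question}. By Gallagher's theorem the $p'$-degree characters of $M$ over $\lambda$ are precisely the twists $\hat\lambda\mu$ with $\mu$ a linear character of $\bar P$, and $G$ permutes them through the action of $G/M$ on $V:=\irr{\bar P/\Phi(\bar P)}$; here $K$ acts faithfully, after passing if necessary to a faithful completely reducible submodule, which is possible because $|K|$ is prime to $p$. Applying the assumed affirmative answer to Question~\ref{question} for the group $K\rtimes P$ acting on $V$ produces a vector, that is a character $\theta=\hat\lambda\mu$, which is fixed by $P$ and satisfies $|\cent K\theta|^{2}<|K|$, so that its $K$-orbit has length $[K:\cent K\theta]>|K|^{1/2}$.

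Because $\theta$ is $P$-invariant, its inertia group satisfies $I_G(\theta)/M=\cent K\theta\rtimes P$, so that $[G:I_G(\theta)]=[K:\cent K\theta]$ is prime to $p$. Choosing a $p'$-degree $\psi\in\irr{I_G(\theta)|\theta}$ (one exists since $I_G(\theta)/M$ is $p$-solvable) and inducing yields $\chi_2:=\psi^{G}\in\irr{G|\lambda}$ of $p'$-degree with $\chi_2(1)=[K:\cent K\theta]\,\psi(1)\ge[K:\cent K\theta]\,\lambda(1)>|K|^{1/2}\lambda(1)$. For the second, smaller, character I would work over $\hat\lambda$. Applying character triples to $(G,M,\hat\lambda)$, whose determinantal order is prime to $p$, replaces it by a group in which the image of $K$ is a genuine normal $p'$-subgroup $K_0$ complemented by a $p$-group isomorphic to $P$, the correspondence preserving $p'$-degrees. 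There all characters over the relevant faithful central character have $p'$-degree, with sum of squares of their degrees equal to $|K|$; since $P$ acts on $K_0$ coprimely, a $P$-invariant such character exists by a Glauberman-type argument and may be chosen of degree at most $|K|^{1/2}$. By coprime extension across $P$ (now genuinely coprime, as $(|K_0|,|P|)=1$) it extends to the whole group, and translating back gives $\chi_1\in\irr{G|\lambda}$ of $p'$-degree with $\chi_1(1)\le|K|^{1/2}\lambda(1)$. Then $\chi_1(1)<\chi_2(1)$ while both lie in $\irr{G|\lambda}$ and have $p'$-degree, the desired contradiction; hence $K=1$ and $G/Z$ is a $p$-group.

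I expect the main obstacle to be the reduction to $p'$-length one. For a general $p$-solvable $\bar G/\bar P$ the minimal $p'$-degree over $\hat\lambda$ need not be bounded by $|K|^{1/2}\lambda(1)$, because the layers of $\bar G$ lying above $K$ can force large $p'$-degrees into \emph{both} of the characters constructed above, so that the comparison no longer isolates the $K$-orbit; the bound on $\chi_1(1)$ genuinely uses $G/M=K\rtimes P$, which makes the requirement of $p'$-degree localize to the coprime action of $P$ on $K$. A secondary technical point, also resting on $(|K|,|P|)=1$, is the simultaneous production of a faithful completely reducible module $V$ and of a small-degree $P$-invariant character over $\hat\lambda$ to which coprime extension applies.
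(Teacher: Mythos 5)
Your outline reproduces the paper's strategy: reduce by character triples to $Z$ central, use Question~\ref{question} to find a $P$-invariant character over $\lambda$ with a large orbit, turn it into a $p'$-degree character of degree exceeding $|K|^{1/2}\lambda(1)$, and contradict this with a second $p'$-degree character of degree at most $|K|^{1/2}\lambda(1)$ coming from a sum-of-squares bound (the paper's version is $d^2>|K:U|$ versus $d^2\le|H:Z|=|K:U|$, and your $\chi_1$ construction is a sound variant of the paper's $\xi$). However, there is a genuine gap exactly at the step you label ``granting this'': the reduction to $p'$-length one, i.e.\ to $G/M=K\rtimes P$. This is not a preliminary that can be granted; it is the heart of the lemma and occupies most of the paper's proof. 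The paper obtains it by induction on $|G/Z|$: with $K_0=\Oh pK$, one shows that the stabilizer $T=G_\phi$ of any $P$-invariant $\phi\in\irr{K_0|\lambda}$ again satisfies the hypotheses of the lemma relative to $(K_0,\phi)$ --- the delicate point being $\oh{p'}{T/K_0}=1$, proved via Hall--Higman 1.2.3 --- so induction gives $T=K_0P$; one then checks that $G_0=PK_0$ inherits the hypotheses (here the equal-degree assumption is used, via Clifford correspondence, to pin down the $p'$-degrees in $G_0$), so that $G_0<G$ is impossible and $G=PK_0$. Since you give no argument for this step, and yourself name it as the main obstacle, the proposal does not prove the lemma.

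There is a second, more hidden, gap in your construction of the large character $\chi_2$: you assert that a $p'$-degree $\psi\in\irr{I_G(\theta)|\theta}$ exists ``since $I_G(\theta)/M$ is $p$-solvable''. That principle is false in general: for a $G$-invariant character of a normal subgroup, $p$-solvability (even being a $p$-group) of the quotient does not guarantee a $p'$-degree character above it --- one essentially needs $\theta$ to extend to the preimage of a Sylow $p$-subgroup. Your $\theta=\hat\lambda\mu$ is problematic precisely there: $\mu$ has $p$-power order, so $o(\theta)$ is divisible by $p$ and the coprime extension theorem (Isaacs (8.16)) cannot be applied across the preimage of $P$; whether such an extension exists in your situation would itself require proof. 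The paper is structured to avoid exactly this obstruction: it builds the large character entirely on the $p'$-side, using the canonical $p$-power-order extension $\hat\beta$ of $\beta\in\irr V$ to its stabilizer $K_\beta$ (legitimate because $K_\beta/V$ is a $p'$-group), a $P$-invariant $\gamma\in\irr{K_\beta|\hat\lambda}$ found by a counting argument, Clifford induction to $K$, and finally extension of the restriction to $K_0$ from $K_0$ to $G=PK_0$, which works because every irreducible character of $K_0=\Oh p{K_0}$ automatically has $p'$-determinantal order. A one-line appeal to $p$-solvability cannot replace this construction. (A smaller point, which the paper also treats lightly: invoking Question~\ref{question} requires the module to be faithful and completely reducible for the whole group $K\rtimes P$; your ``passing to a faithful completely reducible submodule'' should be justified, e.g.\ by showing $K$ acts faithfully on the socle and factoring out the kernel of the full action, which is a normal $p$-subgroup.)
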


\begin{proof}
We argue  by induction on $|G/Z|$. We may certainly assume that
$|G/Z| > 1$.  By using character triple isomorphisms 
(see Theorem (3.1) of \cite{N2}), we may also assume that
$Z$ is a central $p'$-group. Thus $Z=\oh {p'}G$.

Let $U/Z = \oh{p}{G/Z}$ and note that $U > Z$. We suppose that $U < G$,
and we seek a contradiction.
Let $K/U = \oh{p'}{G/U}$. Note that $K > U$. Also, write $U=V\times Z$,
and note that $U=\oh pG$. Also, by Hall-Higman's 1.2.3. Lemma,
we have that $\cent GV \sbs U \times Z$.
If $V_1=\Phi(V)$, by elementary group theory we have that
$\oh{p'}{G/ZV_1}=1$. If $\tilde\lambda=1_{V_1} \times \lambda \in \irr{V_1 \times Z}$,
then $\irr{G|\tilde\lambda} \sbs \irr{G|\lambda}$,
and by induction we will conclude that
$G/ZV_1$ is a $p$-group, and this will prove the theorem.
So we may assume that $V=\oh pG$ is elementary abelian.
Hence $\cent GV=U \times Z$.

Now let $K_0 = \Oh pK$ and $U_0 = U \cap K_0$. Note that $Z \sbs U_0$ and that 
$K_0/U_0 = \oh{p'}{G/U_0}$. In particular,
 $\oh{p'}{G/K_0}$ is trivial. Also, for all characters
$\phi \in \irr{K_0|\lambda}$, we have $o(\phi)$ and $\phi(1)$ are
${p'}$-numbers (because $K_0$ has a normal
abelian Sylow $p$-subgroup and $\Oh p{K_0}=K_0$).

Now fix $P \in \syl pG$ and suppose that $\phi \in  \irr{K_0|\lambda}$ is $P$-invariant. Write $T = G_\phi$ for the stabilizer
of $\phi$ in $G$. Hence $|G:T|$ is not divisible
by $p$. 
We claim that $T$ satisfies the hypotheses of
the theorem with respect to the character $\phi$ and the normal
subgroup $K_0 \nor T$. Notice that all $p'$-degree
members $\psi$ of $\irr{T|\phi}$ induce   to $p'$-degree
characters of $G$,  therefore
of degree $d$. Hence $\psi(1)=d/|G:T|$. Hence to prove the claim, we need to check that $\oh{p'}{T/K_0}$ is trivial.
Let $W/K_0 = \oh{p}{G/K_0} \sbs PK_0/K_0$,
and therefore $W$ stabilizes
$\phi$.   Thus $W \sbs T$ and $\oh{p'}{T/K_0}$
centralizes the normal $p$-subgroup $W/K_0$.
But $\oh{p'}{G/K_0}$ is trivial, and Hall-Higman's Lemma~1.2.3 applies to show
that $\oh{p'}{T/K_0} = 1$, as wanted.

By the inductive hypothesis, we conclude that $T/K_0$ is a
$p$-group, hence $T=K_0P$ (since $P \in \syl pT$).  We have proved this
for all $P$-invariant $\phi \in  \irr{K_0|\lambda}$.

Now let $G_0=PK_0$. We claim that $G_0$
satisfies the hypothesis of the theorem
with respect to $Z\nor G_0$. Let $\tau \in {\rm Irr}_{p'}(G_0|\lambda)$
and let $\phi=\tau_{K_0} \in \irr{K_0|\lambda}$ which is  $P$-invariant and irreducible
(because $G_0/K_0$ is a $p$-group).
We know that $G_0$ is the stabilizer in $G$ of $\phi$,
by the previous paragraphs. Therefore $\tau^G=\chi \in \irr G$
is irreducible of $p'$-degree.
Then $\chi(1)=d$,
and we conclude that $\tau(1)=d/|G:G_0|$. So
in order to prove the claim we just need
to show that $\oh{p'}{G_0/Z}=1$.
 However,  we have that $\oh{p'}{G_0/Z}$ centralizes $U/Z=\oh p{G/Z}$.
 Since $\oh {p'}{G/Z}=1$, we conclude that $\oh{p'}{G_0/Z}=1$.
  If $G_0 < G$,
the inductive hypothesis yields that $G_0/Z$ is a ${p}$-group, which contradicts
the fact that $K>U$. Hence we have that $G=PK_0$.
Thus $\bar G=G/U$ has a normal $p$-complement $\bar K=K/U$.

Now we have that $\irr V$ is a completely reducible, finite, and faithful
$\bar G$-module. By using the affirmative answer to
Question (\ref{question}), there exists $\beta \in \irr V$
centralized by $P$ such that 
$$|K_\beta/U|^2 < |K/U| \, ,$$
where $K_\beta$ is the stabilizer in $K$ of $\beta$.
In other words,
$$|K:K_\beta|^2>|K/U| \, .$$

Now, since $K_\beta/V$ is a $p'$-group,
there exists a unique extension $\hat\beta \in \irr{K_\beta}$
of $\beta$, by using Corollary (8.16) of \cite{I}, which has $p$-power order. 
In particular, this linear character has $Z$ in its kernel, and by uniqueness is $P$-invariant
(because $\beta$ is $P$-invariant).
Let $\hat\lambda=1_V \times \lambda \in \irr U$.
Since $K_\beta/U$ is a $p'$-group
and $\hat\lambda$ is $P$-invariant, then
we may find some $\gamma \in \irr{K_\beta| \hat\lambda}$ which
is $P$-invariant (this is because $\hat\lambda^{K_\beta}$ has $p'$-degree).
 Now we have that
  $\gamma\hat\beta \in \irr{K_\beta}$ (because $\hat\beta$ is linear)  lies over $\beta$.
  By the Clifford correspondence, we have that 
$\rho=(\gamma \hat\beta)^K \in \irr K$. This character $\rho$
 is $P$-invariant, has $p'$-degree $|K:K_\beta|\gamma(1)$ and lies over $\lambda$.
Also $\rho_{K_0} \in \irr{K_0}$ is $P$-invariant,
has $p'$-degree, and therefore it has an 
 extension $\chi \in \irr G$ with $\chi_{K_0}=\rho_{K_0}$, by using Corollary (8.16) of \cite{I} and the fact
 that $K_0=\Oh pG$.
 Hence $$d=|K:K_\beta|\gamma(1) \ge |K:K_\beta | \, .$$
Therefore,
 $$d^2 \ge |K:K_\beta |^2 > |K/U| \, .$$
 
 Now, let $H$ be a $p$-complement of $G$. Hence $HV=K$
 and $H\cap V=1$.

  Finally, using that  $\hat\lambda$ is $P$-invariant
  and $(\hat\lambda)^K$ has $p'$-degree, 
  we can find a $P$-invariant $\xi \in \irr{K|\hat\lambda}$ of $p'$-degree. Arguing as before,
  we have that $\xi$ extends to $G$, and therefore
  $\xi(1)=d$. However, $\xi_H \in \irr{H|\lambda}$. 
  Hence, $d^2 \le |H:Z|$ by elementary character theory.
  However, $|H:Z|=|K:U|$ and   this is a contradiction.  
 \end{proof}
 
 \medskip
(A similar argument gives the same conclusion of Lemma 11.3 if
we assume that $\chi^0 \in \ibr G$ for all $\chi \in \irr{G|\lambda}$ of $p'$-degree.)

\medskip

\iitem{Proof of Theorem~~\ref{mainpsolvable}.}~~We argue by induction
on $|G|$. Let $Z=\oh {p'}G$, and let $\lambda \in \irr Z$ be
covered by $B$. If $T$ is the stabilizer of $\lambda$ in $G$ and $b$ is the block
of $T$ which corresponds to $B$ via Fong-Reynolds (\cite{N}, Theorem (9.14)), then $b$ is a EHZD block.
If $T<G$, then $b$ is nilpotent by induction. Thus $B$ is nilpotent by Lemma 1 of \cite{N1}, for instance.
Hence, we may assume that $T=G$. In this case, $\irr B= \irr{G|\lambda}$ by Theorem (10.20) of \cite{N}.
Now we conclude that $G$ has a normal $p$-complement by Lemma \ref{lempsolvable}. \qed

\bigskip


\end{document}